\documentclass{amsart} 
\usepackage[top=1.5in, bottom=1in, left=0.9in, right=0.9in]{geometry}

\usepackage[utf8]{inputenc}
\usepackage[all]{xy}
\usepackage{color}
\usepackage{hyperref}
\usepackage{tikz-cd}

\usepackage{amsmath, mathtools}
\hypersetup{colorlinks=true}

\usepackage{amssymb}
\usepackage{amsfonts}
\usepackage{graphicx}
\usepackage{tikz}
\usepackage{mathrsfs}
\usepackage{amscd}
\usepackage[all]{xy}
\usepackage{amsthm}
\usepackage{cleveref}

\usepackage{verbatim}
\usepackage{amscd}
\usepackage{mathtools}
\usepackage{comment}
\usepackage{arydshln}
\usepackage{mathtools}
\usepackage{latexsym, amsthm, amsfonts, mathrsfs, amscd,enumerate,multicol}
\newtheorem{thm}{Theorem}[section]
\newtheorem{lem}[thm]{Lemma}
\newtheorem{prop}[thm]{Proposition}

\newtheorem{cor}[thm]{Corollary}
\theoremstyle{definition}
\newtheorem{dfn}[thm]{Definition}
\newtheorem{defn}[thm]{Definition}
\newtheorem{ques}[thm]{Question}
\newtheorem{claim}{Claim}
\newtheorem{setup}[thm]{Setup}
 
\newtheorem{rem}[thm]{Remark}

\newtheorem{ex}[thm]{Example}
\newtheorem{fact}[thm]{Fact} 

\newtheorem{setting}[thm]{Setting}
\theoremstyle{remark}

\numberwithin{equation}{thm}

\def\depth{\operatorname{depth}}
 
\def\tr{\operatorname{tr}}     
\def\Ext{\operatorname{Ext}}
 
\def\hom{\operatorname{Hom}} 
\def\Hom{\operatorname{Hom}}  
 
\def\m{\mathfrak{m}}

\def \Ass {\operatorname{Ass}}

\def \Spec{\operatorname{Spec}}

\def\p{\mathfrak{p}}

\def \Max{\operatorname{Max}}
\def \Min {\operatorname{Min}}

\def \Ker {\operatorname{Ker}}
\def \Im {\operatorname{Im}}

\newcommand{\rme}{\mathrm{e}}

\newcommand{\rmr}{\mathrm{r}}

\newcommand{\rmQ}{\mathrm{Q}}

\newcommand{\fkm}{\mathfrak{m}}
\newcommand{\fkn}{\mathfrak{n}}

\newcommand{\fkp}{\mathfrak{p}}
\newcommand{\fkq}{\mathfrak{q}}

\def\ol{\overline}

\subjclass[2020]{13B22
, 13B30
, 13H10}
\keywords{trace ideal, Cohen--Macaulay, canonical module, integral closure, numerical semigroup ring}
\thanks{Souvik Dey was partly supported by the Charles University Research Center program No.UNCE/SCI/022 and a grant GA CR 23-05148S from the Czech Science Foundation. 
Shinya Kumashiro was supported by JSPS KAKENHI Grant Number JP24K16909.
}

\begin{document} 

\title{On partial trace ideals} 

\author{Souvik Dey}  
\address{Souvik Dey: Department of Mathematical Sciences, University of Arkansas, 850 West Dickson Street Fayetteville, Arkansas 72701 United States} 
\email{souvikd@uark.edu} 
\author{Shinya Kumashiro}
\address{Shinya Kumashiro: Department of Mathematics, Osaka Institute of Technology, 5-16-1 Omiya, asahi-ku, Osaka, 535-8585, Japan}
\email{shinya.kumashiro@oit.ac.jp}

\begin{abstract}
We investigate the notion of partial trace ideals, recently introduced by Maitra. We first establish several properties of partial trace ideals and give affirmative answers to questions posed by Maitra. We then study the invariant defined by the partial trace ideal of the canonical module, and obtain an upper bound that recovers one direction of a result of Kobayashi. Moreover, in the case of numerical semigroup rings generated by three elements, we provide an explicit formula for this invariant.
\end{abstract}

\maketitle  
   
\section{Introduction}

Let $R$ be a Noetherian ring and $M$ an $R$-module. The trace ideal of $M$, the sum of all images of $\Hom_R(M, R)$,  has long been known as a useful tool in commutative algebra (for example, see \cite{Ba, Ding, Vas}). In recent years, motivated by Lindo's work \cite{Lin}, there has been active research on the ubiquity of trace ideals and on the trace ideal of the canonical module. 

On the other hand, very recently, Maitra \cite{sm} introduced the notion of a partial trace ideal of $M$. For an $R$-module $M$, let  
\[
h(M):=\inf \{\ell_R(R/\Im f) \mid f\in \hom_R(M,R)\}. 
\]
With this notation, a partial trace ideal of $M$ is an image of $f\in \Hom_R(M, R)$ such that $\ell_R(R/\Im f) =h(M)$. 
Maitra has used the notion of partial trace ideals to obtain new progress on the Berger conjecture \cite{sm}, and has also given a characterization of the almost Gorenstein property \cite{Mai2}. However, several fundamental properties of partial trace ideals remain open. In this paper, we list below the problems that we address. 
Let $\ol{R}$ denote the integral closure of $R$. We consider the colon in the total quotient ring $\rmQ(R)$.
\begin{ques}
\begin{itemize}
\item When is $h(M)$ finite?  
\item (\cite[Question 5.1]{Mai2}): For a given $R$-module $M$, how many partial trace ideals of $M$ exist? 
\item (\cite[after Theorem B, after Theorem 2.5, and Conclusion (2)]{sm}): Let $(R, \fkm)$ be a Cohen-Macaulay local ring of dimension one.  For an $\fkm$-primary ideal $I$, $I$ is a partial trace ideal for some $R$-module if and only if $R:I\subseteq \ol{R}$?
\end{itemize}
\end{ques}

The first purpose of this paper is to answer the questions stated above, and we establish the following.

\begin{thm}{\rm (Proposition \ref{p21} and Theorem \ref{1dim})}\label{aaa11}
Let $\displaystyle S=R\setminus \bigcup_{\fkp\in \Spec (R)\setminus \Max (R)}\fkp$ be a multiplicatively closed subset of $R$. Consider the following conditions.
\begin{enumerate}[\rm(1)] 
\item $h(M)<\infty$.
\item $S^{-1}R$ is a direct summand of $S^{-1}M$. 
\item $\ell_R(R/\tr_R(M))<\infty$.
\item $M_\fkp$ has a free summand for all $\fkp\in \Spec (R)\setminus \Max (R)$.
\end{enumerate}
Then, {\rm (1)$\Leftrightarrow$(2)$\Rightarrow$(3)$\Leftrightarrow$(4)} hold. {\rm (3)$\Rightarrow$(2)} holds if $R$ is local and $\dim R=1$. 
\end{thm}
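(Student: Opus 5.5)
The plan is to reduce everything to one dictionary between finite colength and meeting $S$, and then use that trace ideals commute with localization. Throughout, $M$ is finitely generated, so $\Hom_R(M,R)$ localizes: $S^{-1}\Hom_R(M,R)\cong\Hom_{S^{-1}R}(S^{-1}M,S^{-1}R)$, and likewise at each prime.

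\emph{Key lemma.} For an ideal $I$, $\ell_R(R/I)<\infty$ iff $I\cap S\neq\emptyset$.
\begin{itemize}
\item ($\Leftarrow$) If $s\in I\cap S$, then $s$ lies in no non-maximal prime. Hence $V(I)\subseteq V(s)\subseteq\Max R$. Since $R$ is Noetherian, $R/I$ has finitely many minimal primes, all maximal, so it has finite length.
\item ($\Rightarrow$) If $\ell_R(R/I)<\infty$, then $I$ lies in no non-maximal prime. We need an element of $I$ outside all of them. When the non-maximal primes can be reduced to finitely many, prime avoidance gives this; for example, when $\dim R=1$ they are exactly the minimal primes.
\end{itemize}
I expect this direction to be the main obstacle. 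In general $\bigcup_{\fkp\notin\Max R}\fkp$ is an infinite union, and it can even equal $\fkm$: in a local ring of dimension $\ge 2$ the maximal ideal is the union of its height-one primes. I would first try to prove it in the form actually needed, by examining the structure of $S^{-1}R$.

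I am not confident this works. In a $2$-dimensional regular local ring, $M=\fkm$ with the inclusion $\fkm\to R$ gives $h(M)=1<\infty$, but $S$ consists of units and $M$ has no free summand. So I would re-check the definition of $S$, or any standing assumptions, against the paper before committing to the direction (1)$\Rightarrow$(2).

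\emph{(1)$\Leftrightarrow$(2).} First, $S^{-1}R$ is a direct summand of $S^{-1}M$ iff some $S^{-1}R$-linear map $S^{-1}M\to S^{-1}R$ is surjective. By the localization of $\Hom$, such a map has the form $s^{-1}f$ with $f\in\Hom_R(M,R)$, and it is surjective iff $S^{-1}\Im f=S^{-1}R$, i.e.\ iff $\Im f\cap S\neq\emptyset$. Second, $h(M)<\infty$ iff some $f$ has $\ell_R(R/\Im f)<\infty$. The key lemma applied to $I=\Im f$ identifies the two conditions.

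\emph{(2)$\Rightarrow$(3).} For the $f$ found above, $\Im f\subseteq\tr_R(M)$. So $\tr_R(M)\cap S\neq\emptyset$, and the easy direction of the lemma gives $\ell_R(R/\tr_R(M))<\infty$.

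\emph{(3)$\Leftrightarrow$(4).} Use $\tr_R(M)_\fkp=\tr_{R_\fkp}(M_\fkp)$ together with the standard fact that over a local ring, $\tr(N)$ is the whole ring iff $N$ has a free summand. Then (4) says $\tr_R(M)\not\subseteq\fkp$ for every non-maximal $\fkp$. For an ideal $I$ in a Noetherian ring, this is equivalent to $V(I)\subseteq\Max R$, which in turn is equivalent to $\ell_R(R/I)<\infty$.

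\emph{(3)$\Rightarrow$(2) when $R$ is local of dimension one.} Here the non-maximal primes are the finitely many minimal primes $\fkp_1,\dots,\fkp_n$. Then $S^{-1}R$ is semilocal with maximal ideals $\fkp_iS^{-1}R$, and it is Artinian. By (4), which is equivalent to (3), each $M_{\fkp_i}$ has a surjection onto $R_{\fkp_i}$. Using the Chinese remainder theorem on $S^{-1}R\cong\prod_i R_{\fkp_i}$, I would glue these into an $S^{-1}R$-linear surjection $S^{-1}M\to S^{-1}R$. This is exactly (2).

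Alternatively, I can stay inside $R$. The sets $\{f:\Im f\subseteq\fkp_i\}$ are proper submodules of $\Hom_R(M,R)$. A combination $f=\sum a_jf_j$, with coefficients chosen by prime avoidance, then has $\Im f$ not contained in any $\fkp_i$, i.e.\ $\Im f\cap S\neq\emptyset$.
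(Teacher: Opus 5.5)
Your doubt about (1)$\Rightarrow$(2) is justified, and your counterexample is correct. If $(R,\fkm)$ is local with $\dim R\ge 2$, then by Krull's principal ideal theorem every $x\in\fkm$ lies in a prime of height at most one. Such a prime is not maximal, so $S=R\setminus\fkm$ and $S^{-1}R=R$. Condition (2) then just says $M$ has a free summand, i.e.\ $h(M)=0$. For $R=k[[x,y]]$ and $M=\fkm$ you get $h(M)=1$, yet (2) fails.

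The paper's proof of (1)$\Rightarrow$(2) asserts ``$S^{-1}(\Im f)=S^{-1}R$'' directly from $\ell_R(R/\Im f)<\infty$. That is exactly the ($\Rightarrow$) direction of your key lemma. It requires avoiding a possibly infinite family of non-maximal primes, and it fails in general. The paper even contains a second counterexample. In the Remark following Lemma \ref{addl29}, $R=k[[x^3,x^2y,xy^2,y^3]]$ has dimension two and $M=(x,y)\oplus(x^2,xy,y^2)$ has $h(M)=1$, so $M$ has no free summand.

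So do not try to prove (1)$\Rightarrow$(2) in general. What holds is (2)$\Rightarrow$(1)$\Rightarrow$(3)$\Leftrightarrow$(4). Prove (1)$\Rightarrow$(3) directly from $\Im f\subseteq\tr_R(M)$, as the paper does, rather than through (2). Your arguments for (2)$\Rightarrow$(1) and (3)$\Leftrightarrow$(4) agree with the paper's. When $\dim R\le 1$, every non-maximal prime is one of the finitely many minimal primes. Finite prime avoidance then gives both directions of your key lemma, so (1)$\Leftrightarrow$(2) holds there. In particular Theorem \ref{1dim} is unaffected.

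For (3)$\Rightarrow$(2) in dimension one, your main route differs from the paper's. The paper proves a prime avoidance statement for homomorphisms (Proposition \ref{p01}): over a local ring, if $\tr_R(M)\not\subseteq\fkp_1\cup\dots\cup\fkp_\ell$, then a single $f$ has $\Im f\not\subseteq\bigcup_i\fkp_i$. Its proof is an induction using $f_1+a^mf_2$ with $a\in\fkm$ and the unit $1-a^{t-s}$. You instead note that $S^{-1}R\cong\prod_i R_{\fkp_i}$ is Artinian, and you glue the surjections $M_{\fkp_i}\to R_{\fkp_i}$ supplied by (4). This is shorter and never uses that $R$ is local, so it gives (3)$\Rightarrow$(2) for every Noetherian ring of dimension at most one. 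The paper's proposition is more general in a different direction: it allows comparable primes and arbitrary dimension.

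Your second sketch also works once it is made explicit. Since the minimal primes are pairwise incomparable, choose $f_j$ with $\Im f_j\not\subseteq\fkp_j$ and $a_j\in\bigcap_{i\ne j}\fkp_i\setminus\fkp_j$. Then $f=\sum_j a_jf_j$ agrees with $a_if_i$ modulo $\fkp_i$, so $\Im f\not\subseteq\fkp_i$ for each $i$. Ordinary prime avoidance applied to the ideal $\Im f$ finishes the argument.
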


\begin{thm}{\rm (Theorem \ref{t26} and Corollary \ref{corcor})}\label{aaa12}
Let $(R, \fkm)$ be a Cohen-Macaulay local ring of dimension one, and let $I$ be an $\fkm$-primary ideal of $R$.

\begin{enumerate}[\rm(1)] 
\item Suppose that all $\fkm$-primary ideals have a principal reduction (for example, the residue field $R/\fkm$ is infinite or $\ol{R}$ is local with the same residue field as that of $R$). Then, $I$ is a partial trace ideal for some $R$-module if and only if $R:I\subseteq \ol{R}$.

\item Suppose that $\ol{R}$ is a discrete valuation ring with the maximal ideal $\fkn$ and $R/\fkm\cong \ol{R}/\fkn$. Suppose that $J$ is a partial trace ideal of $I$. 
Then, 
\begin{align*}
\{\text{partial trace ideals of $I$}\} = \{\alpha J \mid \alpha\in R:J, v(\alpha) =0\},
\end{align*}
\end{enumerate}
where $v(\alpha)$ denotes the normalized valuation associated to $\ol{R}$ (see before Remark \ref{tuesday}).
\end{thm}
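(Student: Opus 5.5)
Both parts rest on the identification of $\Hom_R(I,R)$ with $R:I$ inside $\rmQ(R)$, valid because $I$ is $\fkm$-primary and so contains a non-zerodivisor. Under it, the images of maps $I\to R$ are exactly the ideals $\alpha I$ with $\alpha\in R:I$. For (2), take $J=\alpha_0 I$ to be a partial trace ideal of $I$. I will use that $\ell_R(R/\alpha I)=\ell_R(R/I)+\ell_R(I/\alpha I)$ and $\ell_R(I/\alpha I)=\ell_R(\ol{R}/\alpha\ol{R})=v(\alpha)$. The first equality uses the assumption $R/\fkm\cong\ol{R}/\fkn$, and the length is computed via $I\cong\alpha I$ together with the usual multiplicity argument. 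This lets me read off $h(I)$ as follows.

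\textbf{Part (2).} By the length formula, $h(I)=\ell_R(R/I)+\min\{v(\alpha)\mid \alpha\in R:I\}$, so the partial trace ideals are the $\alpha I$ with $v(\alpha)$ minimal. In particular $v(\alpha_0)$ is that minimum. Now apply the same computation to $J$, whose dual is $R:J=\alpha_0^{-1}(R:I)$. Every $\beta\in R:J$ has the form $\alpha/\alpha_0$ with $\alpha\in R:I$, hence $v(\beta)\ge 0$. Consequently the images of $J$ attaining the minimal colength are exactly $\beta J$ with $\beta\in R:J$ and $v(\beta)=0$, and these coincide with the $\alpha I$ of minimal valuation. This gives the displayed equality.

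\textbf{Part (1), necessity.} Let $I=\Im f$ be a partial trace ideal of a module $M$. Then for every $\alpha\in R:I$ the map $\alpha f$ has image $\alpha I\subseteq R$. Since $\ell_R(R/\alpha I)\ge\ell_R(R/I)$, I get $\ell_R(I/\alpha I)\ge 0$ in the signed sense $\ell_R(\ol{R}/\alpha\ol{R})$. This should force $\alpha$ to be integral. Concretely, I argue that if some $\alpha\in R:I$ is not in $\ol{R}$, then an element of the form $u+\alpha$ for a suitable unit $u$ (or a power of $\alpha$) yields an image of strictly smaller colength. Equivalently, I use the criterion that $\alpha\in\ol{R}$ if and only if $\ell(\alpha I'/I')\le 0$ for all fractional ideals $I'$; I would make this precise through the multiplicity formula $\ell_R(R/\alpha I)-\ell_R(R/I)=\ell_R(\ol{R}/\alpha\ol{R})$, computed with signed lengths.

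\textbf{Part (1), sufficiency.} Given $R:I\subseteq\ol{R}$, I show that $I$ is a partial trace ideal of $M=I$ itself. Every image has the form $\alpha I$ with $\alpha\in R:I\subseteq\ol{R}$, and then $\ell(R/\alpha I)=\ell(R/I)+e(\alpha\ol R)\ge\ell(R/I)$. The principal-reduction hypothesis is what makes this length comparison work, since it gives the formula $\ell(I/\alpha I)=\ell(R/\alpha R)$ for a non-zerodivisor $\alpha$ viewed inside a reduction.

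The main obstacle is necessity: producing, from a non-integral $\alpha$, a map with strictly smaller colength image. This is where principal reductions are used, to translate lengths into multiplicities.
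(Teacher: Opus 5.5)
Your Part (2) is fine and is essentially the paper's argument: it uses the length formula $\ell_R(R/\alpha I)=\ell_R(R/I)+v(\alpha)$ (Lemma \ref{l219e}) and then substitutes $R:J=\alpha_0^{-1}(R:I)$.

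In the sufficiency half of (1) your idea is also right, but principal reductions play no role there. What you need is $\ell_R(R/aR)\ge \ell_R(R/bR)$ whenever $a/b\in R:I\subseteq\ol R$. The clean way to get it is to pass to the module-finite birational extension $S=R[R:I]$. There $aS\subseteq bS$, and $\ell_R(S/xS)=\ell_R(R/xR)$ for non-zerodivisors $x$. Do not work with $\ol R$ itself, which need not be finite over $R$.

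The genuine gap is necessity in (1).

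\begin{itemize}
\item A single inequality $\ell_R(R/\alpha I)\ge\ell_R(R/I)$ only says $\ell_R(R/aR)-\ell_R(R/bR)\ge 0$ for $\alpha=a/b$. This does not force integrality once there are several branches. In $R=k[[x,y]]/(xy)$, take $\alpha=(x^4+y)/(x^2+y^2)$, i.e.\ $(x^2,y^{-1})\in k((x))\times k((y))$. It lies in $R:\fkm^2$ and has $\ell_R(R/aR)-\ell_R(R/bR)=5-4=1$, yet $\alpha\notin\ol R$.
\item Your ``criterion'' fails for the same reason. The signed length $\ell(\alpha I'/I')$ equals $\ell_R(R/bR)-\ell_R(R/aR)$ for every fractional ideal $I'$, so quantifying over $I'$ adds nothing.
\item Powers of $\alpha$ are not available. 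If $\alpha$ is not integral then $\alpha^n\notin R:I$ for $n\gg 0$; otherwise $R[\alpha]\subseteq R:I$ would be finite.
\item The $u+\alpha$ device is only asserted. Making it work would need a decomposition of $\ell_R(R/aR)-\ell_R(R/bR)$ into branch valuations, plus a choice of unit avoiding finitely many residues. You also never indicate where the principal-reduction hypothesis enters.
\end{itemize}

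The missing idea is to let a principal reduction produce the competitor.

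\begin{enumerate}
\item Pick a non-zerodivisor $b\in I$ and a principal reduction $(a)$ of $b(R:I)$.
\item Then $R\subseteq R:(a/b)I=(b/a)(R:I)\subseteq\ol R$.
\item By sufficiency applied to $J=(a/b)I\cong I$, you get $\ell_R(R/J)=h(J)=h(I)$.
\item If $I$ is also a partial trace ideal, this gives $\ell_R(R/aR)=\ell_R(R/bR)$.
\item In the finite extension $S=R[b/a]$ one has $bS\subseteq aS$ with equal colengths, hence $aS=bS$. So $b/a$ is a unit of $S$.
\item Therefore $R:I\subseteq (b/a)(R:I)S\subseteq\ol R$.
\end{enumerate}
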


The second aim of this paper is to analyze the invariant $h(\omega_R)$, where $\omega_R$ denotes the canonical module of $R$. In what follows, we assume that $(R, \fkm)$ is a Cohen–Macaulay local ring admitting a canonical module $\omega_R$. A simple observation shows that $R$ is Gorenstein if and only if $h(\omega_R)=0$. Moreover, Artin local rings with $h(\omega_R)=1$ are known as Teter rings, and their structure was clarified by several papers (see, for example, \cite{AAM, HV}). In addition, Kobayashi \cite{Ko} has obtained the following result for $R$ with $\dim R=1$ and $h(\omega_R)=2$.

\begin{fact}(\cite[Theorem 1.4]{Ko}) Suppose that $R$ is a Cohen-Macaulay local ring of dimension one having the canonical module $\omega_R$. Then, $h(\omega_R)\le 2$ if and only if $bg(R)\le 1$, where 
\[
bg(R):=\inf \{\ell_S(R/S) \mid S\subseteq R \text{ is a module finite}, \rmQ(S)=\rmQ(R), \text{ and $S$ is Gorenstein}\}.
\]
\end{fact}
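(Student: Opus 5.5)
The plan is to work throughout with a canonical ideal. Since $\dim R=1$ and $R$ has a canonical module, I would first reduce to the case where $\omega_R$ is isomorphic to an ideal $K\subseteq R$; if needed, pass to $R[X]_{\fkm R[X]}$, which is harmless for both invariants. Every $f\in\Hom_R(K,R)$ is multiplication by some $a\in R:K$. Its image $aK$ is again a canonical ideal contained in $R$. So
\[
h(\omega_R)=\min\{\ell_R(R/K')\mid K'\subseteq R,\ K'\cong\omega_R\},
\]
and both directions become statements about canonical ideals of small colength. I read $bg(R)$ as the minimum of $\ell_S(R/S)$ over Gorenstein subrings $S\subseteq R$ such that $R$ is module finite over $S$ and $\rmQ(S)=\rmQ(R)$.

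For ``$bg(R)\le 1\Rightarrow h(\omega_R)\le 2$'', let $S\subseteq R$ be such a Gorenstein subring with $\ell_S(R/S)\le 1$. Since $S$ is Gorenstein of dimension one with the same total quotient ring, $\omega_R\cong\Hom_S(R,S)\cong S:R$, the conductor of $R$ into $S$. This is an ideal of $R$ contained in $S\subseteq R$. Duality over the Gorenstein ring $S$ gives $\ell_S(S/(S:R))=\ell_S(R/S)$, so
\[
\ell_R(R/(S:R))\le \ell_S(R/(S:R))=\ell_S(R/S)+\ell_S(S/(S:R))=2\,\ell_S(R/S)\le 2 .
\]
By the reduction above, this gives $h(\omega_R)\le 2$.

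For the converse, take a canonical ideal $K\subseteq R$ with $\ell_R(R/K)\le 2$. If $\ell_R(R/K)=0$, then $R$ is Gorenstein and $S=R$ works. Otherwise I would build $S$ between $K$ and $R$ with $\ell(S/K)=\ell(R/S)$ and show that $S$ is a ring with $S:R=K$. Gorensteinness of $S$ should then follow from $\Hom_S(R,S)\cong K\cong\omega_R$ together with the length count, since that count forces $\omega_S\cong S$. Concretely, when $\ell_R(R/K)=2$, I would let $S$ be the preimage in $R$ of the subring $(R/\fkm)\cdot 1+$ (nothing) of $R/K$, that is $S=K+\mathbb{Z}\cdot1$ suitably completed, or $S=K+u$-multiples for a unit lift. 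The aim is for $S/K$ to be the residue field, so that $\ell(R/S)=\ell(S/K)=1$. The case $\ell_R(R/K)=1$ should be handled by replacing $K$ with $xK$ for a suitable non-zerodivisor $x$, to land in colength $2$, or should force Gorensteinness outright.

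The main obstacle is this converse construction. $R/K$ need not contain a coefficient field, so ``$K$ plus the residue field'' is not obviously a subring. One must also check that $S:R$ is exactly $K$ and not larger. I would first try to exploit $K:K=R$ and $\fkm K\subseteq K$ to show that $S=K+R\cdot e$, for a carefully chosen element $e$, is closed under multiplication. Failing that, I would follow Kobayashi's approach via the ring $K:\fkm$ or via $\fkm:\fkm$.
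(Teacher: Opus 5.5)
Your proof of $bg(R)\le 1\Rightarrow h(\omega_R)\le 2$ is correct. It is the paper's proof of Theorem~\ref{tbg} specialized to $bg(R)\le 1$: $\omega_R\cong\Hom_S(R,S)\cong S:R\subseteq R$, and $\ell_S(R/(S:R))=2\ell_S(R/S)$ by duality over $S$. The detour through $R[X]_{\fkm R[X]}$ is unnecessary, and its harmlessness for $bg$ is not justified. The paper does not prove the converse; it only cites Kobayashi. Your converse, however, has a genuine gap. No subring $S$ is actually constructed, and none of your candidates ($K+\mathbb{Z}\cdot 1$, ``$K+Re$'') is shown to be a local ring with $S:R=K$, let alone Gorenstein. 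The colength-one case is also mishandled. Since $\ell_R(R/xK)=\ell_R(R/K)+\ell_R(R/xR)$, rescaling cannot move colength one to colength two unless $R$ is a DVR. What is true is that $K=\fkm\cong\omega_R$ gives $\fkm:\fkm=K:K=R$. For a non-DVR, however, $R:\fkm=\fkm:\fkm\supsetneq R$, so $R$ must be a DVR.

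The missing idea is that $S$ is forced. Suppose $S$ is Gorenstein with $\ell_S(R/S)=1$. Then $S/(S:R)\cong\Ext^1_S(R/S,S)$ has length one, so $S:R=\fkn$ is the maximal ideal of $S$. It is a canonical ideal of $R$, so for non-Gorenstein $R$ it is not $\fkm$. Hence $\ell_R(R/\fkn)=2$ and $S/\fkn\cong R/\fkm$, i.e.\ $S/\fkn$ is a coefficient field of the Artinian ring $R/\fkn$. This tells you the construction. Take a canonical ideal $K$ with $\ell_R(R/K)=2$ and a coefficient field $k'$ of $R/K$, and let $S$ be the preimage of $k'$ in $R$. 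Then $S$ is local with maximal ideal $K$, since an element of $S\setminus K$ is a unit of $R$ whose inverse lies in $S$. Also $S:R=K$. Moreover $S:K=R$: if $xK\subseteq S$, then $xK$ is an $R$-submodule of $S$, so $xK\subseteq S:R=K$ and $x\in K:K=R$. Hence $\rmr(S)=\ell_S((S:K)/S)=\ell_S(R/S)=1$, so $S$ is Gorenstein with $\ell_S(R/S)=1$.

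Such a $k'$ exists when $R$ contains a field, by Cohen's structure theorem applied to the complete ring $R/K$. It need not exist in general, and then the implication actually fails. For instance, let $V=\mathbb{Z}_p[\pi]$ with $\pi^5=p$, and let $R=\mathbb{Z}_p+\pi^3V$, whose value semigroup is $\langle 3,4,5\rangle$.
\begin{itemize}
\item $R$ is not Gorenstein, and $\omega_R\cong R+R\pi$.
\item Every canonical ideal $K$ of colength two is $\beta(R+R\pi)$ with $v(\beta)=3$, so its value set is $\{3,4,6,7,\dots\}$. In particular $h(\omega_R)=2$.
\item Since $v(p)=5$, we have $p\notin K$, and $R/K\cong\mathbb{Z}/p^2\mathbb{Z}$ contains no field. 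By the forcing argument above, $bg(R)\ge 2$.
\end{itemize}
So the obstacle you flagged is not merely technical. This direction needs a hypothesis guaranteeing a coefficient field in $R/K$, such as $R$ containing a field.
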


We give another characterization of the condition $h(\omega_R)= 2$ that is different from the above (Theorem \ref{h=2}). Moreover, we recover the ``if'' part of the above theorem as follows.

\begin{thm} {\rm (Theorem \ref{tbg})}\label{aaa13}
Let $(R,\m)$ be a one-dimensional Cohen-Macaulay local ring having a canonical module $\omega_R$. If $R$ is generically Gorenstein, then $h(\omega_R)\leq 2bg(R)$. 
\end{thm}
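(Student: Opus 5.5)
The plan is to turn a Gorenstein subring $S$ realizing $bg(R)$ into an explicit homomorphism $\omega_R\to R$ whose image has colength at most $2\,bg(R)$. If $bg(R)=\infty$ there is nothing to prove, so fix a module-finite Gorenstein subring $S\subseteq R$ with $\rmQ(S)=\rmQ(R)$ and $\ell_S(R/S)=n=bg(R)$. Since $R$ is local and module-finite over $S$, the ring $S$ is local. It is one-dimensional Cohen--Macaulay, and $\omega_S\cong S$. Because $R$ is a finite $S$-algebra, the standard change-of-rings formula gives
\[
\omega_R\cong \Hom_S(R,\omega_S)\cong \Hom_S(R,S).
\]
As $R$ and $S$ have the same total quotient ring, the last module is the colon $S:R=\{x\in \rmQ(R)\mid xR\subseteq S\}$, i.e.\ the conductor of $R$ into $S$. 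This is an ideal of $R$ contained in $S\subseteq R$. (Generic Gorensteinness of $R$ is consistent with this and guarantees that $\omega_R$ is realized as a fractional ideal; in this route it is essentially automatic.)

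So the inclusion $S:R\hookrightarrow R$ gives an element of $\Hom_R(\omega_R,R)$ with image $S:R$, and therefore
\[
h(\omega_R)\le \ell_R(R/(S:R)).
\]
Next I compare lengths over $R$ and over $S$. Any finite-length $R$-module $N$ has a composition series over $R$ whose factors are $R/\fkm$. Each such factor has $S$-length $[R/\fkm: S/\fkm_S]\ge 1$, so $\ell_R(N)\le \ell_S(N)$. Hence it suffices to show $\ell_S(R/(S:R))= 2n$.

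Split the length as
\[
\ell_S(R/(S:R))=\ell_S(R/S)+\ell_S(S/(S:R)).
\]
The main step is to show $\ell_S(S/(S:R))=\ell_S(R/S)$. This is the duality for fractional ideals over a one-dimensional Gorenstein local ring. For fractional ideals $X\subseteq Y$ containing a nonzerodivisor one has $S:(S:X)=X$ and $\ell_S(Y/X)=\ell_S((S:X)/(S:Y))$. Apply it with $X=S$ and $Y=R$, using $S:S=S$. I expect this step to be the main point to verify carefully. I would justify it by applying $\Hom_S(-,S)$ to $0\to S\to R\to R/S\to 0$. Since $S$ is Gorenstein of dimension one, $\Hom_S(R/S,S)=0$ and $\Ext^1_S(R/S,S)$ has the same length as $R/S$ by Matlis duality, while $\Ext^1_S(R,S)=0$ because $R$ is maximal Cohen--Macaulay over $S$. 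This yields
\[
0\to S:R\to S\to \Ext^1_S(R/S,S)\to 0,
\]
so $\ell_S(S/(S:R))=n$. Combining everything gives $h(\omega_R)\le \ell_R(R/(S:R))\le \ell_S(R/(S:R))=2n=2\,bg(R)$.
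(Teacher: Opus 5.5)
Your proposal is correct and follows the paper's proof essentially step by step: you realize $\omega_R\cong \Hom_S(R,S)\cong S:R$ for a Gorenstein subring $S$ attaining $bg(R)$, bound $h(\omega_R)$ by $\ell_R(R/(S:R))$, and dualize $0\to S\to R\to R/S\to 0$ to get $S/(S:R)\cong \Ext^1_S(R/S,S)$, hence $\ell_S(R/(S:R))=2\,bg(R)$. The only cosmetic difference is that you use the inequality $\ell_R(N)\le \ell_S(N)$, whereas the paper writes the exact formula $\ell_R(N)=\ell_S(N)/\ell_S(R/\m)$.
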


We further provide a formula for computing $h(\omega_R)$ in the case of three-generated numerical semigroup rings (Theorem \ref{thm4.4}). 

We describe the organization of this paper. In Section \ref{sec2}, we explore properties of partial trace ideals and prove Theorems \ref{aaa11} and \ref{aaa12}. In Section \ref{sec3}, we study $h(\omega_R)$ and prove Theorem \ref{aaa13}. In Section \ref{sec4}, we provide a formula for $h(\omega_R)$ in the case of three-generated numerical semigroup rings. Examples are given throughout the paper.

\begin{setup}
In what follows, all rings are commutative Noetherian rings with identity and all modules are assumed to be finitely generated. Let $R$ be a ring. 
Let $\rmQ(R)$ and $\ol{R}$ denote the {\it total quotient ring} of $R$ and the {\it integral closure} of $R$, respectively. 
An $R$-module $X$ is called a {\it fractional ideal} if $X\subseteq \rmQ(R)$ and $X$ contains a non-zerodivisor of $R$. 
Let $X, Y$ be fractional ideals of $R$. We denote by $Y:X$ the colon fractional ideal considered in $\rmQ(R)$. It is well-known that there is an isomorphism
\[
Y:X \cong \Hom_R(X, Y); \ \alpha \mapsto \cdot \alpha,
\]
where $\cdot \alpha$ means the homomorphism $X\to Y$ defined by multiplying $\alpha\in Y:X$ (\cite{HKun}), and we will freely use this fact. When we consider the colon in $R$, we denote it by $Y:_R X$. We say that an ideal $I$ is {\it regular} if $I$ contains a non-zerodivisor of $R$. 

If $R$ is local, then $\fkm$ denotes the unique maximal ideal of $R$. 
For an $R$-module $M$ over a local ring $R$, we denote by $\rme(M), \mu_R(M), \ell_R(M)$ the {\it multiplicity}, the {\it number of minimal generators}, the {\it length}, respectively. If $R$ is a Cohen-Macaulay local ring, $\rmr(R)$ denotes the {\it Cohen-Macaulay type} of $R$. 

\end{setup}

\section{Trace ideals and partial trace ideals}\label{sec2}

Throughout this section, let $R$ be a Noetherian ring and $M$ an $R$-module.

\subsection{Trace ideals}
In this subsection we summarize basic properties of trace ideals. 

\begin{dfn}
For an $R$-module $M$, the ideal 
\[
\tr_R(M)=\sum_{f\in \hom_R(M, R)} \mathrm{Im} f
\]
is called the {\it trace ideal of $M$}. An ideal $I$ is called a {\it trace ideal} if $I$ is a trace ideal of some $R$-module.
\end{dfn}

\begin{fact}\label{r12}
The following hold true. 
\begin{enumerate}[\rm(1)] 
\item $\tr_R(M) = \Im \varphi$, where $\varphi$ is the evaluation map 
\[
\varphi:\Hom_R(M, R) \otimes_R M \to R; f\otimes x \mapsto f(x) \quad \text{for $f\in \Hom_R(M, R)$ and $x\in M$. }
\] 
When $M$ is a fractional ideal $I$, the map $\varphi$ can be identified with $(R:I)\otimes_R I \to R; f\otimes x \mapsto fx$; hence, $\tr_R(I)=(R:I)I$. 
\item (\cite[Proposition 2.8(viii)]{Lin}): $\tr_R(M)_\fkp = \tr_{R_\fkp}(M_\fkp)$ for all $\fkp\in \Spec (R)$. 
\item (cf. \cite[Proposition 2.8(iii)]{Lin}): $\tr_R(M)=R$ if and only if $M^{\oplus n}$ has a free summand for some $n>0$. If $R$ is local, $n=1$ suffices. 
\end{enumerate}
\end{fact}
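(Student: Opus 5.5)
The plan is to prove the three items directly from the definitions, using only the finite generation of $M$ and the Noetherian hypothesis on $R$.

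For (1), the image of $\varphi$ is generated by the elements $f(x)$ with $f\in\Hom_R(M,R)$ and $x\in M$. The ideal generated by all such elements is exactly $\sum_f \Im f$, so $\Im\varphi=\tr_R(M)$ is a tautology. For a fractional ideal $I$, I will use the isomorphism $R:I\cong\Hom_R(I,R)$, $\alpha\mapsto\cdot\alpha$, recalled in the Setup. Under it, each $f$ has the form $\cdot\alpha$ with $\Im f=\alpha I$, and $\varphi$ becomes multiplication $\alpha\otimes x\mapsto \alpha x$. Summing over $\alpha\in R:I$ gives $\tr_R(I)=(R:I)I$.

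For (2), I will use (1). Since $M$ is finitely generated, hence finitely presented, over the Noetherian ring $R$, there is a natural isomorphism $\Hom_R(M,R)_\fkp\cong\Hom_{R_\fkp}(M_\fkp,R_\fkp)$. Tensor products also commute with localization. Because localization is exact, it preserves images, so $(\Im\varphi)_\fkp$ is the image of the localized evaluation map. After the two identifications above, that localized map is the evaluation map for $M_\fkp$ over $R_\fkp$. Hence $\tr_R(M)_\fkp=\tr_{R_\fkp}(M_\fkp)$. The only point needing care is checking that the isomorphisms are compatible with the evaluation maps, which is a routine diagram check.

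For (3), suppose first that $M^{\oplus n}\cong R\oplus N$. The projection onto $R$ is surjective, so $\tr_R(M^{\oplus n})=R$. Moreover $\tr_R(M^{\oplus n})=\tr_R(M)$, since a map from $M^{\oplus n}$ to $R$ is a sum of maps on the components, and each map on $M$ extends to $M^{\oplus n}$ by projecting to a component. Conversely, if $\tr_R(M)=R$, write $1=\sum_{i=1}^n f_i(x_i)$ with $f_i\in\Hom_R(M,R)$ and $x_i\in M$. Then the map $M^{\oplus n}\to R$, $(y_i)\mapsto\sum_i f_i(y_i)$, is surjective, and it splits because $R$ is free; this gives the free summand. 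In the local case, not every $f_i(x_i)$ can lie in $\fkm$, since their sum is $1$. So some $f_i(x_i)$ is a unit, which makes $f_i\colon M\to R$ surjective, and it splits, giving the statement with $n=1$.

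None of these steps presents a real obstacle. The most delicate point is the compatibility check in (2), which relies on $M$ being finitely presented.
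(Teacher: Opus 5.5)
Your proof is correct: (1) follows directly from the definitions and the isomorphism $R:I\cong\Hom_R(I,R)$; (2) correctly uses finite presentation to localize $\Hom_R(M,R)$; and (3) correctly splits the surjection $M^{\oplus n}\to R$, $(y_i)\mapsto\sum_i f_i(y_i)$, with a unit summand giving the local case. The paper gives no argument of its own for this Fact (it treats (1) as immediate and cites \cite[Proposition 2.8]{Lin} for (2) and (3)), and your arguments are the standard ones behind those citations.
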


The following is a characterization of trace ideals. 

\begin{fact}{\rm (\cite[Corollary 2.2]{GIK})}\label{f22}
Let $I$ be an ideal of $R$ containing a non-zerodivisor of $R$. Then the following are equivalent. 
\begin{enumerate}[\rm(1)] 
\item $I$ is a trace ideal. 
\item $I$ is the trace ideal of $I$, that is, $I=(R:I)I$.
\item $R:I=I:I$.
\end{enumerate}
\end{fact}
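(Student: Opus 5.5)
The last statement is Fact \ref{f22}: for a regular ideal $I$, the conditions (1) $I$ is a trace ideal, (2) $I=(R:I)I$, (3) $R:I=I:I$ are equivalent. I will prove the cycle (1)$\Rightarrow$(2)$\Rightarrow$(3)$\Rightarrow$(1). Throughout I use Fact \ref{r12}(1), which gives $\tr_R(I)=(R:I)I$, and the identification $R:I\cong\Hom_R(I,R)$.

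\emph{(1)$\Rightarrow$(2).} Write $I=\tr_R(M)$ for some module $M$. The inclusion $(R:I)I\subseteq I$ is automatic, since $(R:I)I\subseteq R$ is generated by elements $\alpha x$ with $\alpha\in R:I$ and $x\in I$. For the reverse inclusion, take $f\in\Hom_R(M,R)$. Its image lies in $I$, so $f$ factors as $M\to I\hookrightarrow R$. For $y\in M$ we then have $f(y)\in I$.

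The key step is to write $f(y)$ as the image of an element of $I$ under a map $I\to R$. Since $I$ contains a non-zerodivisor $a$, I want to use the map $I\to R$ given by multiplication by $f(y)/a$, but this need not land in $R$. So I argue differently. Consider the map $g\colon M\to I$, $m\mapsto a f(m)$. Then $ag$ is a homomorphism to $I$, but this again does not directly help.

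Better: Hom-tensor adjunction. The surjection $\bigoplus_{f} M\to I$ given by $(m_f)\mapsto\sum f(m_f)$ shows $I$ is an image of $M^{\oplus n}$ (finitely many $f$ suffice, by Noetherianity). Then compose: any $\psi\in\Hom_R(I,R)$ pulls back to $M^{\oplus n}\to R$, so $\tr_R(I)\subseteq\tr_R(M^{\oplus n})=\tr_R(M)=I$. That inclusion is already known, so the nontrivial direction is $I\subseteq(R:I)I$.

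To get it, let $\iota\colon I\hookrightarrow R$ be the inclusion. Each $f\colon M\to R$ factors as $\iota\circ\bar f$ with $\bar f\colon M\to I$. Hence $f(y)=\iota(\bar f(y))$, and $\bar f(y)\in I$ while $\iota\in\Hom_R(I,R)$, which corresponds to $1\in R:I$. Therefore $f(y)=1\cdot \bar f(y)\in(R:I)I$. The point is simply that $1\in R:I$; summing over all $f$ and $y$ gives $I\subseteq(R:I)I$.

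\emph{(2)$\Rightarrow$(3).} Since $I\subseteq R$, we have $I:I\subseteq R:I$. Conversely, if $\alpha\in R:I$, then $\alpha I\subseteq\alpha(R:I)I$ fails to make sense directly. Instead use (2): $\alpha I\subseteq (R:I)I=I$, because $\alpha I$ is contained in the product $(R:I)I$ by definition. So $\alpha\in I:I$.

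\emph{(3)$\Rightarrow$(1).} Assuming $R:I=I:I$, we get $(R:I)I=(I:I)I=I$, using $1\in I:I$. By Fact \ref{r12}(1), $\tr_R(I)=(R:I)I=I$, so $I$ is the trace ideal of $I$ itself.

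None of these steps is really an obstacle. The only delicate point is in (1)$\Rightarrow$(2): one must notice that every $f\in\Hom_R(M,R)$ factors through $I$, which gives $I\subseteq(R:I)I$ via $1\in R:I$. Regularity of $I$ is used only to identify $\Hom_R(I,R)$ with $R:I$.
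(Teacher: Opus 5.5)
Every ingredient of a correct proof is in your text, but the first sentence of (1)$\Rightarrow$(2) is false, and it leads you to swap which inclusion carries the content.

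The inclusion $(R:I)I\subseteq I$ is \emph{not} automatic. For any non-unit non-zerodivisor $a$ and $I=aR$, one has $R:I=a^{-1}R$ and $(R:I)I=R\not\subseteq I$. Your own (2)$\Rightarrow$(3) argument also shows the claim cannot hold: if it did, it would give $R:I=I:I$ for every regular ideal. That fails for $I=aR$, where $R:I=a^{-1}R\neq R=I:I$. The genuinely trivial inclusion is the other one, $I\subseteq (R:I)I$. It holds for every ideal simply because $1\in R:I$, so factoring the maps $M\to R$ through $I$ is unnecessary. Your closing remark about the ``only delicate point'' therefore points at the wrong step.

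The nontrivial inclusion is exactly what your paragraph opening with \emph{Better: Hom-tensor adjunction} proves and then dismisses as already known. Choose $f_1,\dots,f_n$ with $I=\sum_i \Im f_i$ and form the surjection $\pi\colon M^{\oplus n}\to I$. For every $\psi\in\Hom_R(I,R)$ this gives $\psi(I)=\Im(\psi\circ\pi)\subseteq \tr_R(M^{\oplus n})=\tr_R(M)=I$. Combined with Fact \ref{r12}(1), this says $(R:I)I=\tr_R(I)\subseteq I$, which is the idempotence $\tr_R(\tr_R(M))=\tr_R(M)$ and the real content of (1)$\Rightarrow$(2).

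To repair the write-up, delete the ``automatic'' sentence, make the surjection argument the key step, and note the reverse inclusion via $1\in R:I$. With that change, your (1)$\Rightarrow$(2) together with your (2)$\Rightarrow$(3) and (3)$\Rightarrow$(1), which are correct as written, form a complete proof. This is the standard route; the paper itself gives no proof of this fact and only cites \cite{GIK}.
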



\subsection{Partial trace ideals}


\begin{dfn}(\cite[Definition 2.1]{sm})
For an $R$-module $M$, we define an invariant  
\[
h(M):=\inf \{\ell_R(R/\Im f) \mid f\in \hom_R(M,R)\}
\]
and call it the {\it $h$-invariant} of $M$. We say that an ideal $J$ is {\it a partial trace ideal of $M$} if $J=\Im f$ for some $f\in \hom_R(M,R)$ and $\ell_R(R/J)=h(M)$. An ideal $J$ is called a {\it partial trace ideal} if $J$ is a partial trace ideal of some $R$-module $M$. 
\end{dfn}

\begin{rem}
\begin{enumerate}[\rm(1)] 
\item For an ideal $I$, $I$ is a partial trace ideal if and only if $I$ is a partial trace ideal of $I$.
\item Any ideal of an Artinian ring is a partial trace ideal of itself. 
\item As we will see later, a partial trace ideal of a module is not uniquely determined by the module. Indeed, let $R=k[[t^3, t^4, t^5]]$, where $k$ is a field and $k[[t]]$ is the formal power series ring over $k$, and $M=(t^4, t^5)$. For $a\in k$, set $I_a=(t^3+at^4, t^4+at^5)$. 
Then, $I_a$ are partial trace ideals of $M$ for all $a\in k$. On the other hand, $I_a=I_b$ if and only if $a=b$. 
\end{enumerate}
\end{rem}

\begin{proof}
(1): See \cite[Remark 2.5]{sm}.

(2): Let $R$ be an Artinian ring and $I$ an ideal of $R$. Let $f\in \Hom_R(I, R)$. Since $I \xrightarrow{f} \Im f \to 0$, we get 
\[
\ell_R(I) \ge \ell_R(\Im f) =\ell_R(R) - \ell_R(R/\Im f). 
\]
Hence, $\ell_R(R/\Im f) \ge \ell_R(R)-\ell_R(I) = \ell_R(R/I)$ for all $f\in \Hom_R(I, R)$. It follows that $h(I)\ge \ell_R(R/I)$. On the other hand, by considering the embedding $\iota: I\to R$, we get $\ell_R(R/\Im \iota) = \ell_R(R/I)$; hence, $I=\Im \iota$ is a partial trace ideal of $I$.

(3): By considering the homomorphisms $\iota: I\to R; x\mapsto x$ and $f:I\to R;x\mapsto tx$, we get $\tr_R(I) \supseteq \Im \iota + \Im f =\fkm$. Since $R$ is a domain and $I$ is not principal, $\tr_R(I)\ne R$ (Fact \ref{r12}(3)). Thus, $\tr_R(I) = \fkm$. By Remark \ref{tuesday}, $(t^3+at^4)$ is a reduction of $\fkm=\tr_R(I)$. Noting $M \cong I_a$ via the multiplication $t^{3}+at^4$, it follows that $I_a$ is a partial trace ideal of $M$ by Theorem \ref{t26}. 

Assume $I_a=I_b$. Then, $t^3+at^4\in I_b = k(t^3+bt^4) + k(t^4+bt^5) +\sum_{i\ge 6}k t^i$. Hence, there exist $c_i\in k$ such that 
\[
t^3+at^4 = c_3(t^3+bt^4) + c_4(t^4+bt^5) +\sum_{i\ge 6}c_i t^i.
\]
By comparing coefficients, we get $1=c_3$, $a=bc_3+c_4$, and $0=c_4$, i.e., $a=b$.
\end{proof}

As we shall see from the way partial trace ideals are defined, we need to consider the condition of when $h(M)<\infty$. We start with the following. 

\begin{prop}\label{p21}
Let $\displaystyle S=R\setminus \bigcup_{\fkp\in \Spec (R)\setminus \Max (R)}\fkp$ be a multiplicatively closed subset of $R$. 
Consider the following conditions.
\begin{enumerate}[\rm(1)] 
\item $h(M)<\infty$.
\item $S^{-1}R$ is a direct summand of $S^{-1}M$. 
\item $\ell_R(R/\tr_R(M))<\infty$.
\item $M_\fkp$ has a free summand for all $\fkp\in \Spec (R)\setminus \Max (R)$.
\end{enumerate}
Then, {\rm (1)$\Leftrightarrow$(2)$\Rightarrow$(3)$\Leftrightarrow$(4)} hold. 
\end{prop}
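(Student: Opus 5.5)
The plan is to prove the four implications (1)$\Rightarrow$(2), (2)$\Rightarrow$(1), (2)$\Rightarrow$(4) and (3)$\Leftrightarrow$(4), each by comparing the behaviour of $\Im f$ (or $\tr_R(M)$) at non-maximal primes. The basic dictionary is this: an ideal $J$ has $\ell_R(R/J)<\infty$ exactly when $J\not\subseteq\fkp$ for every $\fkp\in\Spec(R)\setminus\Max(R)$. Indeed, $\ell_R(R/J)<\infty$ iff $\operatorname{Supp}(R/J)\subseteq \Max(R)$, i.e.\ $J_\fkp=R_\fkp$ for all non-maximal $\fkp$.

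\textbf{Step (3)$\Leftrightarrow$(4).} By Fact \ref{r12}(2) we have $\tr_R(M)_\fkp=\tr_{R_\fkp}(M_\fkp)$, and by Fact \ref{r12}(3) in the local ring $R_\fkp$, this equals $R_\fkp$ iff $M_\fkp$ has a free summand. Combining with the dictionary gives the equivalence.

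\textbf{Step (1)$\Rightarrow$(2).} Take $f\in\Hom_R(M,R)$ with $\ell_R(R/\Im f)<\infty$. Then $\Im f\not\subseteq\fkp$ for every non-maximal $\fkp$. By prime avoidance we would like an element $x\in M$ with $f(x)\in S$; the subtlety is that the union defining $S$ may be over infinitely many primes. We handle this by passing to a finite set. Since $R/\Im f$ has finite length, $\Im f$ is contained only in maximal ideals, so any prime containing $\Im f$ is maximal. Thus $\Im f\not\subseteq \fkp$ for the non-maximal primes, and an element $a\in\Im f$ with $a\notin\bigcup\fkp$ is what we need. To get it, observe that $\bigcup_{\fkp \text{ non-max}}\fkp$ equals the union of the (finitely many) minimal primes of $\Im f$ together with all non-maximal primes. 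I expect the cleaner route is to avoid choosing $a$ at all: $S^{-1}(R/\Im f)=0$ because every prime of $S^{-1}R$ corresponds to a prime of $R$ contained in $\bigcup_{\fkp\text{ non-max}}\fkp$, hence (again by prime avoidance applied to the ideal) contained in some non-maximal prime, at which $R/\Im f$ vanishes. So $S^{-1}f:S^{-1}M\to S^{-1}R$ is surjective, and it splits since $S^{-1}R$ is free. This gives (2).

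\textbf{Step (2)$\Rightarrow$(1) and (2)$\Rightarrow$(4).} Given a surjection $g:S^{-1}M\to S^{-1}R$, write $g=S^{-1}f/s$ with $f\in\Hom_R(M,R)$ (as $M$ is finitely presented, $\Hom$ localizes). Then $S^{-1}\Im f=S^{-1}R$, so $(\Im f)_\fkp=R_\fkp$ for every prime $\fkp$ disjoint from $S$, in particular every non-maximal $\fkp$. By the dictionary, $\ell_R(R/\Im f)<\infty$, so $h(M)<\infty$. This is (1), and then (3) follows since $\Im f\subseteq\tr_R(M)$.

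The main obstacle is the prime-avoidance point: checking that primes of $R$ disjoint from $S$ are exactly the primes contained in some non-maximal prime. One inclusion is the standard fact that an ideal contained in a union of primes is contained in one of them. This holds for arbitrary unions only with care. I would instead argue directly that each prime $\fkq$ disjoint from $S$ satisfies $\fkq\subseteq\bigcup\fkp$. If $\fkq$ were maximal and $\fkq\subseteq \bigcup_{\text{non-max}}\fkp$, one still needs $R/\Im f$ to vanish at $\fkq$. To settle this, I would use that $\Im f$ is $\fkq$-primary-free: since $\operatorname{Supp}(R/\Im f)$ is a finite set of maximal ideals $\fkm_1,\dots,\fkm_r$, choose $a\in\Im f$ avoiding $\fkm_1,\dots,\fkm_r$ is impossible. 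So the right choice is an element $a\in \Im f$ lying in no minimal prime of $R$... I would verify this directly, and if it proves delicate, phrase (2) via localization at each non-maximal prime instead.
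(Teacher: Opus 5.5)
Your arguments for (2)$\Rightarrow$(1), (1)$\Rightarrow$(3) and (3)$\Leftrightarrow$(4) are correct and agree with the paper. For (2)$\Rightarrow$(1) you localize $\Hom$, get $S^{-1}\Im f=S^{-1}R$, and hence $(\Im f)_\fkp=R_\fkp$ at every non-maximal $\fkp$, since these primes are disjoint from $S$. This is in fact the right form of the paper's last step there, which the paper writes at maximal ideals. For (1)$\Rightarrow$(3) you use $\Im f\subseteq\tr_R(M)$, and for (3)$\Leftrightarrow$(4) you use Fact~\ref{r12}.

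The genuine gap is (1)$\Rightarrow$(2). You need $\Im f\cap S\neq\emptyset$. Your ``cleaner route'' rests on the claim that every prime disjoint from $S$ lies inside a single non-maximal prime. That is prime avoidance over an infinite family, and it is false. Your closing hesitation is justified: no verification will close this step, because the implication itself fails when $\dim R\ge 2$.

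Here is why. Let $(R,\fkm)$ be local of dimension at least two. Krull's principal ideal theorem places every $a\in\fkm$ in a prime of height at most one, so $\bigcup_{\fkp\neq\fkm}\fkp=\fkm$ and $S=R\setminus\fkm$. Then (2) merely says that $R$ is a direct summand of $M$, i.e.\ $h(M)=0$. Concretely, take $R=k[[x,y]]$ and $M=\fkm$. The inclusion gives $h(M)=1<\infty$. But every $f\in\Hom_R(\fkm,R)\cong R:\fkm=R$ has image $r\fkm\subseteq\fkm$, so $S^{-1}R=R$ is not a summand of $S^{-1}M=\fkm$. The module in the Remark following Lemma~\ref{addl29}, with $h(M)=1$ over a two-dimensional ring and no free summand, is another instance.

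The paper's own proof has exactly the same hole: it asserts $S^{-1}(\Im f)=S^{-1}R$ without justification. What survives is:
\begin{itemize}
\item (2)$\Rightarrow$(1)$\Rightarrow$(3)$\Leftrightarrow$(4), in general;
\item (1)$\Rightarrow$(2), whenever $R$ has only finitely many non-maximal primes, e.g.\ $\dim R\le 1$.
\end{itemize}
In the second case the non-maximal primes are minimal, and $\Im f$ lies in none of them. Ordinary finite prime avoidance then produces $a\in\Im f\cap S$, so $S^{-1}f$ is a surjection onto the free module $S^{-1}R$ and splits. That is the only case used later (Theorem~\ref{1dim}), and Remark~\ref{p33} needs only (1)$\Rightarrow$(4), which your argument does establish. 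Note also that your fallback of ``phrasing (2) via localization at each non-maximal prime'' would prove (4), not (2).
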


\begin{proof}
(1)$\Rightarrow$(2): Choose a homomorphism $f\in \Hom_R(M, R)$ such that $\ell_R(R/\Im f) < \infty$. Since $S^{-1}(\Im f)=S^{-1}R$, we get a surjection $S^{-1}f : S^{-1}M\to S^{-1}R$. This homomorphism splits. 

(2)$\Rightarrow$(1): There exists a surjective homomorphism $\varphi \in \Hom_{S^{-1}R} (S^{-1}M, S^{-1}R)$. On the other hand, since $M$ is finitely presented, $\Hom_{S^{-1}R}(S^{-1}M, S^{-1}R) \cong S^{-1}(\Hom_{R}(M, R))$. Thus, there exist $f\in \Hom_R(M, R)$ and $s\in S$ such that $f/s = \varphi$. Since $s/1$ is a unit of $S^{-1}R$, it follows that
\[
\Im(f/1) =(1/s) {\cdot} \Im(f/1) = \Im (f/s) = \Im \varphi =S^{-1} R.
\]
Since $(\Im f)_\fkm \cong (S^{-1}(\Im f))_\fkm \cong (S^{-1}R)_\fkm =R_\fkm$ for each $\fkm \in \Max R$, $\ell_R(R/\Im f)<\infty$; hence, $h(M)<\infty$.

(1)$\Rightarrow$(3): This is easy since $J\subseteq \tr_R(M)$ for any partial trace ideal $J$ of $M$. 

(3)$\Leftrightarrow$(4): This follows by Fact \ref{r12}(2) and (3).
\end{proof}

The implication (3)$\Rightarrow$(2) of Proposition \ref{p21} does not hold in general (Example \ref{ex28}). We prepare a lemma.

\begin{lem}\label{addl29}
Let $(R, \fkm)$ be a Cohen-Macaulay local ring. If there exists a maximal Cohen-Macaulay $R$-module $M$ of rank one such that $h(M)<\infty$, then $\dim R \le 1$.
\end{lem}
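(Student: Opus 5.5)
The plan is to turn the finiteness of $h(M)$ into an embedding of $M$ into $R$ as an ideal of finite colength, and then compare depths. Since $h(M)<\infty$, choose $f\in\Hom_R(M,R)$ with $\ell_R(R/\Im f)<\infty$, and set $J=\Im f$.

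\emph{Step 1: $f$ is injective.} Let $K=\Ker f$. We may assume $\dim R\ge 1$, since otherwise there is nothing to prove. Then $\ell_R(R/J)<\infty$ gives $J_\fkp=R_\fkp$ for every non-maximal prime $\fkp$, in particular for every minimal prime. Since $M$ has rank one, localizing $0\to K\to M\to J\to 0$ at a minimal prime $\fkp$ gives $K_\fkp=0$. So $K$ is supported on no minimal prime. On the other hand, $K\subseteq M$ and $M$ is maximal Cohen--Macaulay, so $\Ass_R K\subseteq \Ass_R M\subseteq \Min R$, because $R$ is Cohen--Macaulay and hence unmixed. Therefore $K=0$ and $M\cong J$.

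\emph{Step 2: depth count.} Now consider the exact sequence $0\to J\to R\to R/J\to 0$, where $J\cong M$ is maximal Cohen--Macaulay of depth $d=\dim R$. If $J\ne R$, then $R/J$ is a nonzero module of finite length, so $\depth R/J=0$. The depth lemma then gives $\depth J\le \depth R/J+1=1$. Combined with $\depth J=d$, this forces $d\le 1$, which is the assertion.

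\emph{The remaining case $J=R$, which is the main obstacle.} If $J=R$, then $f$ is an isomorphism $M\cong R$, and Step 2 gives no information. Indeed $M=R$ itself is maximal Cohen--Macaulay of rank one with $h(R)=0$ in every dimension. So as literally worded the statement needs $M$ to be non-free: the lemma is intended for modules $M$ with no free summand, equivalently $M\not\cong R$ in rank one, which is exactly the situation relevant to Example \ref{ex28}.

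I will therefore state explicitly in the proof that this degenerate case is excluded. The argument above then shows that $\dim R\le 1$ whenever a maximal Cohen--Macaulay module $M\not\cong R$ of rank one has $h(M)<\infty$. In dimension $\ge 2$, it shows that $h(M)<\infty$ forces $M\cong R$. The only genuinely delicate point is Step 1, the use of rank one together with unmixedness to kill the kernel; everything after that is the depth lemma.
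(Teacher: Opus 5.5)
Your proof follows the paper's route: you kill $\Ker f$ using $\Ass(\Ker f)\subseteq \Ass M\subseteq \Min R$ together with $M$ having rank one at those primes, and then apply the depth lemma to $0\to M\to R\to R/J\to 0$. Your caveat is also correct, and it exposes a defect in the statement rather than in your argument: the paper's final depth-lemma step silently assumes $R/J\neq 0$, and $M=R$ over $k[[x,y]]$ (where $h(R)=0$) is a counterexample to the lemma as literally worded, so the lemma needs the hypothesis $M\not\cong R$; that hypothesis does hold in both places the paper uses the lemma, namely Example \ref{ex28} ($I$ is not principal) and Remark \ref{p33} ($R$ is not Gorenstein).
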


\begin{proof}
Let $J=\Im f$ be a partial trace ideal of $M$, where $f\in \Hom_R(M, R)$. Then, $\ell_R(R/J)=h(M)<\infty$ and we have an exact sequence
\[
0 \to \Ker f \to M \xrightarrow{f} R \to R/J \to 0.
\]
Suppose that $\Ker f\ne 0$ and choose $\fkq\in \Ass(\Ker f)$. Then, $\fkq\in \Ass_R (M)\subseteq \Ass (R)$ since $M$ is maximal Cohen-Macaulay. By localizing the above exact sequence at $\fkq$, we get a splitting exact sequence $0 \to (\Ker f)_\fkq \to M_\fkq \to R_\fkq \to 0$. Since $M$ is of rank one, $(\Ker f)_\fkq=0$. This is a contradiction since $\fkq \in \Ass(\Ker f)$. It follows that $\Ker f=0$. Hence, $f$ is injective and we get $0 \to M \xrightarrow{f} R \to R/J \to 0$. Since $M$ and $R$ are maximal Cohen-Macaulay and $R/J$ is of finite, by the depth lemma, $\dim R \le 1$. 
\end{proof}

\begin{rem} 
If the assumption on the rank of $M$ is dropped from Lemma \ref{addl29}, the conclusion fails. For example, let $R=k[[x^3, x^2y, xy^2, y^3]]$ and $M=(x,y)\oplus (x^2,xy, y^2)$. Then, $R$ is a Cohen-Macaulay local ring of dimension two and $M$ is a maximal Cohen-Macaulay $R$-module of rank two. Furthermore, since there exists a surjection $M\to \fkm; (f,g)\mapsto fg$, we get $h(M)=1<\infty$.
\end{rem}

\begin{ex}\label{ex28}
Let $k[[X, Y]]$ be the formal power series ring over a field $k$. Set $R=k[[X, XY, XY^2, XY^3]]$, $\fkm = (X, XY, XY^2, XY^3)$, and $I=(XY, XY^2)$. Then, $\tr_R(I)=\fkm$, that is, $\ell_R(R/\tr_R(I))=1$, but $h(I)=\infty$.
\end{ex}

\begin{proof}
Since $R$ is a local domain and $I\not\cong R$, $I$ does not have a free summand. Hence, $\tr_R(I)\subseteq \fkm$ by Fact \ref{r12}(3). On the other hand, we have $\tr_R(I)\supseteq I+Y^2 I = \fkm$ by considering the inclusion map $I\to R$ and the map $I \to R; a\mapsto Y^2 a$ for $a\in I$. It follows that $\tr_R(I)=\fkm$. 
On the other hand, we note that $I$ is a canonical module (see, for example, \cite[Theorem 6.3.5]{bh1}); hence, $I$ is maximal Cohen-Macaulay. Moreover, since $R$ is a domain, an ideal $I$ is of rank one.  Therefore, since $\dim R=2$, by Lemma \ref{addl29}, $h(I)=\infty$. 
\end{proof}

We consider the implication (3)$\Rightarrow$(2) of Proposition \ref{p21}. As seen in Example \ref{ex28}, this does not hold in general. However, it holds when $R$ is local and $\dim R=1$ (see Theorem \ref{1dim}). The following is a key to prove it.

\begin{prop}\label{p01}
Let $(R, \m)$ be a Noetherian local ring. Let $M$ be a finitely generated $R$-module. Let $\fkp_1, \fkp_2, \dots, \fkp_\ell\in \Spec (R)$. If $\tr_R(M)\not\subseteq \bigcup_{i=1}^\ell \fkp_i$, then there exists a homomorphism $f\in \Hom_R(M, R)$ such that $\Im f\not\subseteq \bigcup_{i=1}^\ell \fkp_i$.
\end{prop}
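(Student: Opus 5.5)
The plan is to reduce to the case where the primes are pairwise incomparable. Then I build a single homomorphism as an $R$-linear combination of homomorphisms that each avoid one prime. Finally I apply ordinary prime avoidance to the image ideal. The image of a single homomorphism is an ideal, so it suffices to find $f\in\Hom_R(M,R)$ with $\Im f\not\subseteq \fkp_k$ for every $k$. Indeed, prime avoidance then gives $\Im f\not\subseteq\bigcup_{i=1}^\ell \fkp_i$.

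First, the reduction. If $\fkp_i\subseteq \fkp_j$ for some $i\ne j$, then $\fkp_i$ can be discarded without changing the union. So I may assume the $\fkp_i$ are pairwise incomparable, and also distinct. The hypothesis $\tr_R(M)\not\subseteq \bigcup_i \fkp_i$ in particular gives $\tr_R(M)\not\subseteq \fkp_i$ for each $i$. Since $\tr_R(M)=\sum_{g}\Im g$ is a sum of ideals and $\fkp_i$ is an ideal, for each $i$ there is some $g_i\in\Hom_R(M,R)$ with $\Im g_i\not\subseteq \fkp_i$.

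Next, the construction. By incomparability and primeness, for each $i$ I can pick
\[
r_i\in \Bigl(\prod_{j\ne i}\fkp_j\Bigr)\setminus \fkp_i .
\]
This uses the fact that $\prod_{j\neq i}\fkp_j\subseteq \fkp_i$ would force some $\fkp_j\subseteq\fkp_i$. When $\ell=1$, take $r_1=1$. I set $f=\sum_{i=1}^\ell r_i g_i\in\Hom_R(M,R)$.

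To check that $\Im f\not\subseteq \fkp_k$ for each $k$, suppose otherwise. Then for every $x\in M$ we have
\[
r_k g_k(x)=f(x)-\sum_{i\ne k} r_i g_i(x)\in \fkp_k,
\]
because $r_i\in\fkp_k$ for $i\ne k$. Since $r_k\notin\fkp_k$ and $\fkp_k$ is prime, this gives $g_k(x)\in\fkp_k$ for all $x$, that is, $\Im g_k\subseteq\fkp_k$, a contradiction. Prime avoidance applied to the ideal $\Im f$ then finishes the proof.

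The only delicate point is the choice of the $r_i$, which requires the incomparability reduction. Everything else is formal: the argument does not actually use that $R$ is local, nor does it need any finiteness of the residue field. The key is that we work with ideals of the form $\Im f$, to which prime avoidance applies, rather than with $\Hom_R(M,R)$ as a union of submodules.
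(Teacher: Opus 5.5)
Your argument is correct, and it takes a genuinely different route from the paper's.

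\textbf{The paper's route.} The paper argues by contradiction. It first disposes of the case where $M$ has a free summand, so that every $\fkp_i\neq\m$. It then uses prime avoidance to rewrite the failure of the conclusion as $\Hom_R(M,R)=\bigcup_{i=1}^\ell\Hom_R(M,\fkp_i)$ and makes this union of submodules irredundant. Finally it runs an induction on $\ell$ in the style of ``a module is not a finite union of proper submodules''. It picks $a\in\m\setminus\bigcup_i\fkp_i$, looks at $f_1+a^mf_2$ for all $m\ge 0$, and uses pigeonhole together with the fact that $1-a^{t-s}$ is a unit in the local ring $R$.

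\textbf{Your route.} You discard the non-maximal members of $\{\fkp_1,\dots,\fkp_\ell\}$, which your incomparability reduction does. You then write down an explicit homomorphism $f=\sum_i r_ig_i$ with $r_i\in\prod_{j\neq i}\fkp_j\setminus\fkp_i$. This guarantees $\Im f\not\subseteq\fkp_k$ for each $k$, and a single application of prime avoidance to the ideal $\Im f$ finishes.

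\textbf{Comparison.} Your proof is shorter and constructive. As you observe, it also shows that the hypotheses that $R$ be local or Noetherian, and that $M$ be finitely generated, are unnecessary: prime avoidance for finitely many primes holds in any commutative ring, and $\tr_R(M)$ is a sum of images in general.

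The paper replaces your incomparability reduction with an irredundancy reduction on the union of the submodules $\Hom_R(M,\fkp_i)$. In exchange, it leans on locality, both to arrange $\fkp_i\neq\m$ (so that $a$ exists) and to invert $1-a^{t-s}$.

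In the application to Theorem~\ref{1dim}, the primes are the minimal primes of $R$. These are automatically pairwise incomparable, so your reduction step is vacuous there.
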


\begin{proof}
If $M$ has a free summand, then there is a surjection $M\to R$. Hence, we may assume that $M$ has no free summands. Then $\tr_R(M)\subseteq \m$ by Fact \ref{r12}(3). It follows that $\fkp_i\ne \fkm$ for all $1\le i \le \ell$. 
Assume that for each $f\in \Hom_R(M, R)$, $\Im f \subseteq \bigcup_{i=1}^\ell \p_i$. By the prime avoidance lemma, this turns out that $\Im f \subseteq \p_i$ for some $1\le i \le \ell$. 
It follows that 
\begin{align}\label{eq1}
\Hom_R(M, R)=\bigcup_{i=1}^{\ell} \Hom_R(M, \p_i).
\end{align}
We then obtain a contradiction by induction on $\ell$.
If $\ell=1$, then \eqref{eq1} shows that $\tr_R(M)\subseteq \p_1$. This contradicts our assumption $\tr_R(M)\not\subseteq \bigcup_{i=1}^\ell \fkp_i$. 

Let $\ell>1$ and assume that there exists a contradiction for $\ell-1$. We may assume that there exist the following elements
\begin{align*}
f_1\in \Hom_R(M, \p_1)\setminus \bigcup_{j\ne 1} \Hom_R(M, \p_j) \quad \text{and} \quad 
f_2\in \Hom_R(M, \p_2)\setminus \bigcup_{j\ne 2} \Hom_R(M, \p_j).
\end{align*}
Indeed, if there are no elements satisfying the above, we can remove $\Hom_R(M, \p_1)$ or $\Hom_R(M, \p_2)$ from the right hand union of \eqref{eq1}. 
Choose an element $a\in \m\setminus \bigcup_{i=1}^\ell \fkp_i$ and let $m\ge 0$. 
\begin{claim}\label{claim1}
$f_1 + a^m f_2 \not\in \Hom_R(M, \p_1)\cup \Hom_R(M, \p_2)$ for all $m\ge 0$.
\end{claim}

\begin{proof}[Proof of Claim \ref{claim1}]
If $f_1 + a^m f_2 \in \Hom_R(M, \p_2)$, then $f_1=(f_1 + a^m f_2)-a^m f_2\in \Hom_R(M, \p_2)$. This contradicts the choice of $f_1$.
Suppose that $f_1 + a^m f_2 \in \Hom_R(M, \p_1)$. Then $a^m f_2=(f_1 + a^m f_2)-f_1\in \Hom_R(M, \p_1)$, that is, $a^m \Im f_2\subseteq \p_1$. 
Since $\p_1$ is a prime ideal and $a^m\not\in \p_1$, $\Im f_2 \subseteq \p_1$. This proves $f_2\in \Hom_R(M, \p_1)$, a contradiction.
\end{proof}

By Claim \ref{claim1}, there is a contradiction if $\ell=2$. Suppose that $\ell>2$. We obtain that $f_1 + a^m f_2 \in \bigcup_{j=3}^\ell \Hom_R(M, \p_j)$ for all $m\ge 0$. Since $\ell$ is finite, there exist integers $3\le j \le \ell$ and $0 \le s < t$ such that $f_1 + a^s f_2$, $f_1 + a^t f_2\in \Hom_R(M, \p_j)$. It follows that 
\[
a^s(1-a^{t-s}) f_2=(f_1 + a^s f_2) - (f_1 + a^t f_2)\in \Hom_R(M, \p_j),
\]
that is, $a^s(1-a^{t-s}) \Im f_2\subseteq \p_j$. Since $1-a^{t-s}$ is a unit, $a^s \Im f_2\subseteq \p_j$. 
Hence, $\Im f_2\subseteq \p_j$ since $\p_j$ is a prime ideal and $a^s\not\in \p_j$. Thus, $f_2\in \Hom_R(M, \p_j)$. This contradicts the choice of $f_2$.

Hence, there exists at least one homomorphism $f\in \Hom_R(M, R)$ such that $\Im f\not\subseteq \bigcup_{i=1}^\ell \fkp_i$.
\end{proof}

\begin{thm}\label{1dim}
Let $(R, \m)$ be a Noetherian local ring of dimension one. Let $M$ be a finitely generated $R$-module. 
Then the following are equivalent.
\begin{enumerate}[\rm(1)] 
\item $h(M)<\infty$.
\item $\ell_R(R/\tr_R(M))<\infty$.
\item $S^{-1}R$ is a direct summand of $S^{-1}M$, where $\displaystyle S=R\setminus \bigcup_{\fkp\in \Spec (R)\setminus\{\fkm \}}\fkp$. 
\item $M_\fkp$ has a free summand for all $\fkp\in \Spec (R)\setminus\{\fkm \}$.
\end{enumerate}
\end{thm}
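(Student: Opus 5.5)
By Proposition \ref{p21}, we already have (1)$\Leftrightarrow$(3)$\Rightarrow$(2)$\Leftrightarrow$(4). Here $\Max(R)=\{\fkm\}$, so the $S$ of Proposition \ref{p21} is exactly the $S$ in (3). It therefore suffices to prove (2)$\Rightarrow$(1), and I will do this by producing a single $f\in\Hom_R(M,R)$ with $\ell_R(R/\Im f)<\infty$.

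Since $\dim R=1$, the primes different from $\fkm$ are exactly the minimal primes of $R$. There are finitely many of them; call them $\fkp_1,\dots,\fkp_\ell$. If $\ell=0$, or if $\tr_R(M)=R$, the statement is immediate. In the first case $\dim R=0$ would contradict our hypothesis. In the second case $M$ has a free summand by Fact \ref{r12}(3), so $h(M)=0$.

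Now assume (2), so $\ell_R(R/\tr_R(M))<\infty$. Then $\tr_R(M)$ is either $R$ or $\fkm$-primary, and in both cases $\tr_R(M)\not\subseteq\fkp_i$ for every $i$. By prime avoidance, $\tr_R(M)\not\subseteq\bigcup_{i=1}^\ell\fkp_i$. Proposition \ref{p01} then gives $f\in\Hom_R(M,R)$ with $\Im f\not\subseteq\bigcup_i\fkp_i$. The ideal $\Im f$ is not contained in any minimal prime, and the only other prime is $\fkm$. Hence $\mathrm{Supp}(R/\Im f)\subseteq\{\fkm\}$, so $\ell_R(R/\Im f)<\infty$ and $h(M)<\infty$, which is (1).

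The substantive work is already done in Proposition \ref{p01}, namely passing from "the sum of the images avoids the primes" to "a single image avoids them." What remains is only the bookkeeping that in dimension one the non-maximal primes are finitely many (the minimal primes), which is exactly what Proposition \ref{p01} needs. Statement (4) is handled through Proposition \ref{p21} via (2)$\Leftrightarrow$(4).
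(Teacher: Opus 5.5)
Your proposal is correct and follows essentially the same route as the paper: reduce to (2)$\Rightarrow$(1) via Proposition \ref{p21}, note that in dimension one the non-maximal primes are the finitely many minimal primes, and apply Proposition \ref{p01} to obtain a single $f$ with $\Im f$ avoiding all of them. You also spell out two steps the paper leaves implicit, namely the prime-avoidance step and the identification of the two sets $S$ (which holds because $\Max(R)=\{\fkm\}$).
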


\begin{proof}
By Proposition \ref{p21}, we only need to prove that (2)$\Rightarrow$(1). 
We note that $\Spec (R)=\{\fkm\}\cup \Min (R)$ since $\dim R=1$. Suppose that $\ell_R(R/\tr_R(M))<\infty$. Since $\Spec (R)=\{\fkm\}\cup \Min (R)$, this is equivalent to saying that $\tr_R(M)\not\subseteq \bigcup_{\fkp\in \Min (R)} \fkp$. Since $\Min (R)$ is a finite set, by Proposition \ref{p01}, there exists a homomorphism $f\in \Hom_R(M, R)$ such that $\Im f\not\subseteq \bigcup_{\fkp\in \Min (R)} \fkp$. It follows that $h(M)\le \ell_R(R/\Im f)<\infty$. 
\end{proof}

The next purpose is to prove Theorem \ref{t26}, which asserts that Maitra's characterization of regular partial trace ideals (\cite[Theorem 2.5]{sm}) holds in more generality.

\begin{lem}\label{l110}
Let $(R, \fkm)$ be a Noetherian local ring. Let $I$ be a regular ideal such that $h(I)<\infty$. If $J$ is a partial trace ideal of $I$, then $J\cong I$. 
\end{lem}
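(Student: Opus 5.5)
Let $J=\Im f$ with $f\in\Hom_R(I,R)$ and $\ell_R(R/J)=h(I)<\infty$. The plan is to show that $f$ is injective; then $f$ gives an isomorphism $I\cong J$. Since $I$ is regular, it contains a non-zerodivisor $x$ of $R$. Because $\ell_R(R/J)<\infty$, the ideal $J$ is not contained in any non-maximal prime. So $J_\fkp=R_\fkp$ for every $\fkp\in\Spec(R)\setminus\{\fkm\}$, and in particular for every associated prime of $R$ that is not maximal.

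The key step is to show that $f$ is given by multiplication by an element of the total quotient ring, i.e., $f=\cdot\alpha$ for some $\alpha\in R:I$; the Setup recalls $\Hom_R(I,R)\cong R:I$. Concretely, for $y\in I$ we have $xf(y)=f(xy)=yf(x)$. Since $x$ is a non-zerodivisor, this gives $f(y)=(f(x)/x)\,y$ in $\rmQ(R)$, so $f=\cdot\alpha$ with $\alpha=f(x)/x$.

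Next we check that $\alpha$ is a non-zerodivisor of $\rmQ(R)$, which is equivalent to $f(x)$ being a non-zerodivisor of $R$. Suppose instead that $f(x)\in\fkp$ for some $\fkp\in\Ass(R)$. Then $J=\alpha I$ satisfies $xJ=f(x)I\subseteq\fkp$. Since $x$ is a non-zerodivisor, $x\notin\fkp$, so $J\subseteq\fkp$. Two cases arise.
\begin{itemize}
\item If $\fkp\ne\fkm$, this contradicts $\ell_R(R/J)<\infty$.
\item If $\fkp=\fkm$, then $\fkm\in\Ass(R)$, so $\depth R=0$. But $x\in\fkm$ is a non-zerodivisor (we may assume $I\ne R$; if $I=R$, then $J$ is the image of a map $R\to R$ and the argument is the same). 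This is impossible.
\end{itemize}
Hence $\alpha$ is a unit in $\rmQ(R)$, and multiplication by $\alpha$ is injective on $I\subseteq\rmQ(R)$. So $f\colon I\to J$ is an isomorphism.

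The step that needs care is identifying $f$ with multiplication by $\alpha$ and controlling the zerodivisors of $f(x)$. The finiteness of $h(I)$ is used exactly to rule out associated primes containing $J$, and regularity of $I$ handles the maximal-ideal case. Minimality of $\ell_R(R/J)$ is not even needed: any $f$ with $\ell_R(R/\Im f)<\infty$ is injective.
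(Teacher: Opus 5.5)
Your argument is essentially the paper's: write $f$ as multiplication by $\alpha\in R:I$. Then show the numerator of $\alpha$ is a non-zerodivisor, since otherwise $J$ lies in an associated prime, which finite colength rules out for non-maximal primes. The paper gets that containment from $cJ=0$ for some $c\neq 0$ plus prime avoidance. Your $xJ=f(x)I\subseteq\fkp$ with $x\notin\fkp$ is a slightly more direct route to the same point.

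However, your treatment of $I=R$ is wrong, and so is your closing remark that minimality is not needed. If $\depth R=0$, a regular ideal must be $R$ itself and every non-zerodivisor is a unit. So in the case $\fkp=\fkm$ there is no non-zerodivisor $x\in\fkm$ to give a contradiction, and the argument is \emph{not} ``the same''.

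The conclusion you assert there is in fact false. Take $R=k[t]/(t^2)$, $I=R$ and $f=\cdot t$. Then $\ell_R(R/\Im f)=1<\infty$, yet $f$ is not injective and $\Im f=(t)\cong k\not\cong R$.

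So in this case minimality is genuinely needed. The identity map gives $h(R)=0$, hence every partial trace ideal $J$ of $R$ has $\ell_R(R/J)=0$, i.e.\ $J=R\cong I$. This is what the paper's ``we may assume that $\depth R>0$ (otherwise, $I=R$)'' tacitly covers.

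To repair the proof, split into two cases. If $\depth R>0$, then $\fkm\notin\Ass(R)$ and your first bullet alone suffices; there your stronger claim is correct for every $f$ with finite-colength image. If $\depth R=0$, use the minimality argument above.
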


\begin{proof}
Since $I$ is regular, $\Hom_R(I, R)\cong R:I$. Hence, there exists $\alpha\in R:I$ such that $J=\alpha I$ and $\ell_R(R/J) =h(I)<\infty$. Write $\alpha=a/b$ for $a,b\in R$ and $b$ is a non-zerodivisor of $R$. Assume that $a$ is a zerodivisor and choose a nonzero element $c\in R$ such that $ac=0$. Then $cJ=c\alpha I=(ac/b)I=0$. Hence, $J\subseteq \fkp$ for some $\fkp\in \Ass (R)$. On the other hand, since $I$ is regular, we may assume that $\depth R>0$ (otherwise, $I=R$). It follows that $\ell_R(R/J)\ge \ell_R(R/\fkp)=\infty$, which is a contradiction. Hence, $a$ is a non-zerodivisor of $R$ and thus $J=\alpha I \cong I$. 
\end{proof}

\begin{lem}\label{f26}
Let $(R, \fkm)$ be a Cohen-Macaulay local ring of dimension one, and let $I$ be a regular ideal of $R$, and $(a)$ is a reduction of $I$. Then, $\ol{I} = \ol{(a)} = a\ol{R}\cap R = I\ol{R}\cap R$.
\end{lem}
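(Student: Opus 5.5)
We have to prove $\ol{I}=\ol{(a)}=a\ol{R}\cap R=I\ol{R}\cap R$, where $(R,\fkm)$ is a one-dimensional Cohen--Macaulay local ring, $I$ is regular, and $(a)\subseteq I$ is a reduction. The plan has three steps, taken in this order.

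\textbf{Step 1: $\ol{I}=\ol{(a)}$, and $a$ is regular.} Since $(a)$ is a reduction of $I$, we have $aI^n=I^{n+1}$ for some $n$. From this, $I^{n+1}\subseteq (a)$, so $I\subseteq \sqrt{(a)}$. The ideal $I$ contains a non-zerodivisor $x$, and a power of $x$ lies in $(a)$; hence $a$ is a non-zerodivisor. The equality $\ol{I}=\ol{(a)}$ is then the standard fact that an ideal and any of its reductions have the same integral closure. Here $\ol{I}$ means the integral closure of the ideal $I$ in $R$.

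\textbf{Step 2: $\ol{(a)}=a\ol{R}\cap R$.} Since $a$ is a non-zerodivisor, the element $a$ is a unit of $\rmQ(R)$.
\begin{itemize}
\item[($\subseteq$)] Take $x\in R$ integral over $(a)$, so $x^m+c_1x^{m-1}+\cdots+c_m=0$ with $c_i\in (a^i)$. Write $c_i=a^ir_i$ and divide by $a^m$. This gives $(x/a)^m+r_1(x/a)^{m-1}+\cdots+r_m=0$, so $x/a\in\ol{R}$. Hence $x\in a\ol{R}\cap R$.
\item[($\supseteq$)] Take $x=ay\in R$ with $y\in\ol{R}$. An integral equation $y^m+r_1y^{m-1}+\cdots+r_m=0$ with $r_i\in R$, multiplied by $a^m$, becomes $x^m+(ar_1)x^{m-1}+\cdots+a^mr_m=0$. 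This is an equation of integral dependence of $x$ over $(a)$.
\end{itemize}

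\textbf{Step 3: $a\ol{R}\cap R=I\ol{R}\cap R$.} The inclusion $a\ol{R}\cap R\subseteq I\ol{R}\cap R$ is clear. For the reverse, it suffices to show $I\ol{R}=a\ol{R}$, which is where the reduction hypothesis enters again. Each $x\in I$ lies in $\ol{I}=\ol{(a)}$, so by Step 2 we get $x/a\in\ol{R}$. Therefore $I\subseteq a\ol{R}$, hence $I\ol{R}\subseteq a\ol{R}$, and the reverse inclusion is trivial.

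\textbf{Main obstacle.} I expect the only delicate point to be Step 1: checking that $a$ is a non-zerodivisor, and justifying $\ol{I}=\ol{(a)}$ cleanly. For the latter I would argue directly rather than cite. From $aI^n=I^{n+1}$, the ideal $I$ is integral over $(a)$ by the determinant trick applied to the finitely generated faithful module $I^n$ (faithful because it contains a non-zerodivisor). Transitivity of integral dependence then gives $\ol{I}\subseteq\ol{(a)}$, and the inclusion $\ol{(a)}\subseteq\ol{I}$ is trivial. Although the lemma is stated for one-dimensional Cohen--Macaulay rings, this argument does not actually use the dimension.
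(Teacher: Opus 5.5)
Your proof is correct and follows the same route as the paper. Like the paper, you identify $\ol{(a)}$ with $a\ol{R}\cap R$, then use $(a)\subseteq I\subseteq\ol{(a)}\subseteq a\ol{R}$ to get $I\ol{R}=a\ol{R}$, and take $\ol{I}=\ol{(a)}$ directly from the reduction hypothesis. The only differences are that you verify $\ol{(a)}=a\ol{R}\cap R$ by hand rather than citing Huneke--Swanson (Propositions 1.5.2 and 1.6.1), and you explicitly check that $a$ is a non-zerodivisor, which the paper uses without comment when it writes $a^{-1}I$.
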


\begin{proof}
By \cite[Propositions 1.5.2 and 1.6.1]{HS}, we have 
\[
a\ol{R}\cap R = \ol{a\ol{R}}\cap R = \ol{(a)}.
\] 
Since $(a)$ is a reduction of $I$, $(a)\subseteq I \subseteq \ol{(a)} = a\ol{R}\cap R \subseteq a\ol{R}$. It follows that $R \subseteq a^{-1} I \subseteq \ol{R}$. Multiplying $\ol{R}$ in the inclusions, we get $a^{-1} I\ol{R} = \ol{R}$, that is, $I\ol{R} = a\ol{R}$. Hence, we get $a\ol{R}\cap R = I\ol{R}\cap R$. The equality $\ol{I} = \ol{(a)}$ follows from the assumption that $(a)$ is a reduction of $I$ (see \cite[Remark 2.3]{HS}). 
\end{proof}

\begin{thm}\label{t26}
Let $(R, \fkm)$ be a Cohen-Macaulay local ring of dimension one, and let $I$ be a regular ideal of $R$. Consider the following conditions.
\begin{enumerate}[\rm(1)] 
\item $I$ is a partial trace ideal. 
\item $I$ is a partial trace ideal of $I$.
\item $R:I\subseteq \ol{R}$.
\item $I$ is a reduction of $\tr_R(I)$, that is, $\tr_R(I)^{n+1}=I\tr_R(I)^n$ for some $n>0$.
\end{enumerate}
Then, {\rm (1)$\Leftrightarrow$(2)$\Leftarrow$(3)$\Rightarrow$(4)} hold. 
Furthermore, if all $\fkm$-primary ideals have a principal reduction (for example, the residue field $R/\fkm$ is infinite or $\ol{R}$ is local with the same residue field as that of $R$, see Remark \ref{tuesday}), then all the conditions are equivalent. 
\end{thm}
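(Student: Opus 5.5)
Throughout, $(R,\fkm)$ is one-dimensional Cohen--Macaulay and $I$ is regular. The plan reduces everything to the isomorphism $\Hom_R(I,R)\cong R:I$: every homomorphism $I\to R$ is multiplication by some $\alpha\in R:I$, with image $\alpha I$.

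\emph{(1)$\Leftrightarrow$(2).} If $I$ is a partial trace ideal of some $M$, then $I=\Im f$ for some $f$, and $h(M)=\ell_R(R/I)<\infty$. The map $M\to I$ induced by $f$ is surjective, so composing any $g\in\Hom_R(I,R)$ with it gives an element of $\Hom_R(M,R)$ with the same image. Hence $h(I)\ge h(M)=\ell_R(R/I)$. The inclusion $I\to R$ gives the reverse inequality, so $I$ is a partial trace ideal of $I$. The converse is trivial.

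\emph{(3)$\Rightarrow$(2).} Let $\alpha\in R:I$. We must show $\ell_R(R/\alpha I)\ge\ell_R(R/I)$ whenever $\ell_R(R/\alpha I)<\infty$. In that case Lemma \ref{l110} shows $\alpha$ is a non-zerodivisor in $\rmQ(R)$. Since $\alpha\in\ol{R}$ and $\alpha$ is a unit of $\rmQ(R)$, the standard length formula in dimension one gives
\[
\ell_R(I/\alpha I)=\ell_R(\ol{R}/\alpha\ol{R})\ge 0 .
\]
More intrinsically, $\ell_R(X/\alpha X)$ is independent of the fractional ideal $X$ and is the same for all $X$. This determinant-type fact holds because for $X\subseteq Y$ we have $\ell(Y/X)=\ell(\alpha Y/\alpha X)$. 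I would justify it by writing $\alpha=a/b$ with $a,b$ non-zerodivisors and using $\ell(X/aX)=\ell(R/aR)$ for MCM modules of rank one; to avoid rank issues when $R$ is not a domain, I would compare $X$ and $\ol{R}$ directly via the inclusions above. Then
\[
\ell_R(R/\alpha I)=\ell_R(R/I)+\ell_R(I/\alpha I)\ge\ell_R(R/I),
\]
which is what we need. The key point where (3) is used is that $\alpha\in\ol{R}$ makes $\ell(\ol R/\alpha\ol R)$ an honest nonnegative length, i.e.\ $\alpha\ol R\subseteq\ol R$.

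\emph{(3)$\Rightarrow$(4).} Set $T=\tr_R(I)=(R:I)I$, using Fact \ref{r12}(1). Since $R:I\subseteq\ol{R}$ and $R:I$ is a finitely generated $R$-module, $R:I$ is a finitely generated subring-like module of integral elements. Thus $T=(R:I)I$ is integral over $I$: each element $\beta x$ with $\beta\in R:I$ and $x\in I$ satisfies an integral equation over $I$ via the determinant trick applied to the faithful module $R[R:I]$, which is a finitely generated module inside $\ol{R}$. Hence $I\subseteq T\subseteq\ol{I}$, and therefore $I$ is a reduction of $T$.

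\emph{Equivalence under a principal reduction.} Assuming every $\fkm$-primary ideal has a principal reduction, I would prove (4)$\Rightarrow$(3) and (2)$\Rightarrow$(3).

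For (4)$\Rightarrow$(3): let $(a)$ be a reduction of $I$. Then $(a)$ is also a reduction of $T$, so by Lemma \ref{f26} we get $T\subseteq a\ol{R}$. This gives $(R:I)I\subseteq a\ol{R}$, hence $(R:I)\,a\subseteq a\ol{R}$ and, since $a$ is a non-zerodivisor, $R:I\subseteq\ol{R}$. (Note $\ell_R(R/T)<\infty$ because $I$ is regular, so $T$ is $\fkm$-primary.)

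For (2)$\Rightarrow$(3), which I expect to be the main obstacle: take $\alpha\in R:I$ and aim to show $\alpha\in\ol R$. Let $(a)$ be a principal reduction of $T$. Then $a\in T$, and I would look for a non-zerodivisor $\beta\in R:I$ with $\beta I\supseteq$ something of small colength. The cleanest route is to show $T\ol R=I\ol R$. If instead $I\ol R\subsetneq T\ol R=a\ol R$, I would produce $\beta\in R:I$ with $\ell(R/\beta I)<\ell(R/I)$, contradicting (2). Indeed, with $a=\sum\beta_i x_i$ and a generic combination, the length computation gives
\[
\ell(R/\beta I)=\ell(R/I)-\bigl(\ell(\ol R/I\ol R)-\ell(\ol R/T\ol R)\bigr),
\]
where the generic choice is obtained by prime avoidance as in Proposition \ref{p01}, and the length formula above extends to $\beta\notin\ol R$ via $\ell(X/\beta X)=\ell(\ol R/\beta\ol R)$ interpreted as a difference. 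This forces $I\ol R=T\ol R$, so $I$ is a reduction of $T$, and then (4)$\Rightarrow$(3) finishes.
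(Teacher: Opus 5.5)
Your arguments for (1)$\Leftrightarrow$(2), (3)$\Rightarrow$(4) and (4)$\Rightarrow$(3) are correct. For (3)$\Rightarrow$(4) you use the determinant trick on $R[R:I]$. Your (4)$\Rightarrow$(3) is shorter than the paper's, since $a\in I$ gives $(R:I)a\subseteq (R:I)I\subseteq a\ol R$ at once.

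The recurring weak point is that you measure lengths on $\ol R$. The theorem covers arbitrary one-dimensional Cohen--Macaulay local rings, where $\ol R$ need not be module-finite over $R$. There the formula $\ell_R(X/\beta X)=\ell_R(\ol R/\beta\ol R)$ is false. For $R=k[[x,y]]/(y^2)$ one has $\ol R=k[[x]]+y\,k((x))$, so $\ell_R(\ol R/x\ol R)=1$ while $\ell_R(R/xR)=2$. In (3)$\Rightarrow$(2) this is only cosmetic: replace $\ol R$ by the module-finite ring $R[\alpha]$, or by $S=R[R:I]$ as the paper does. On such $S$ one has $\ell_R(S/cS)=\ell_R(R/cR)$ for non-zerodivisors $c$, and $\alpha S\subseteq S$ gives the sign you need.

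The genuine gap is (2)$\Rightarrow$(3). Your argument needs a single $\beta\in R:I$ with $\beta\ol R=(R:I)\ol R$. You propose to extract it from $a=\sum\beta_i x_i$ by a generic combination, using prime avoidance as in Proposition~\ref{p01}. Prime avoidance over $R$ does not give this. For a non-zerodivisor $b\in I$, the existence of such a $\beta$ is equivalent to $(b\beta)$ being a principal reduction of $b(R:I)$. That can fail over small residue fields, and knowing a principal reduction of $T$ does not produce one. So the hypothesis must be applied to $b(R:I)$, not to $T$.

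Once you do that, the argument closes as follows:
\begin{itemize}
\item Let $(a)$ be a principal reduction of $b(R:I)$ and set $J=(a/b)I$. Then $R\subseteq R:J=(b/a)(R:I)\subseteq\ol R$.
\item Applying (3)$\Rightarrow$(2) to $J$, using $J\cong I$, and using (2) for $I$ gives $\ell_R(R/J)=h(I)=\ell_R(R/I)$. Hence $\ell_R(R/aR)=\ell_R(R/bR)$.
\item In the module-finite ring $S=R[b/a]$, the inclusion $bS\subseteq aS$ with equal colengths forces $b/a$ to be a unit of $S$.
\item Therefore $R:I\subseteq (b/a)(R:I)S\subseteq\ol R$.
\end{itemize}
This is the paper's argument. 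It bypasses both your comparison of $I\ol R$ with $T\ol R$ and all lengths over $\ol R$, which as written are only valid when $\ol R$ is finite over $R$.
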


\begin{proof}
(1)$\Leftrightarrow$(2): This is clear and written in \cite[Remark 2.3]{sm}. 

(3)$\Rightarrow$(4):
Since $R\subseteq R:I\subseteq \ol{R}$, we have $I \subseteq \tr_R(I)=(R:I)I\subseteq I\ol{R}\cap R=\ol{I}$ by Lemma \ref{f26}. It follows that $I$ is a reduction of $\tr_R(I)$ by \cite[Corollary 1.2.5]{HS}.

(3)$\Rightarrow$(2): Let $S:=R[R:I]$ be the extension ring of $R$ with all elements of $R:I$. We note that $S$ is finitely generated as an $R$-module since $R:I\subseteq \ol{R}$. Assume the contrary, that is, there exists a partial trace ideal $J$ of $I$ such that $\ell_R(R/J)<\ell_R(R/I)$. By Lemma \ref{l110}, $J\cong I$. Let $\alpha\in \rmQ(R)$ such that $J=\alpha I$. Write $\alpha=a/b$ where $a,b\in R$. We have $aI=bJ$. Consider the following diagram of inclusions
\begin{align}\label{eq261}
\begin{split}
\xymatrix{
& R   &  \\
I \ar@{-}[ur] &  & J \ar@{-}[ul] \\
& aI=bJ. \ar@{-}[ur] \ar@{-}[ul] 
}
\end{split}
\end{align}
Here, since $I$ and $J$ are Cohen-Macaulay $R$-modules of rank one, $\ell_R(I/aI) = \ell_R(R/(a))$ and $\ell_R(J/bJ) = \ell_R(R/(b))$ (\cite[Theorem 14.8]{matsu}). 
Therefore, by the above diagram and the inequality $\ell_R(R/J)<\ell_R(R/I)$, we obtain that 
\[
\ell_R(R/(a))=\ell_R(I/aI) < \ell_R(J/bJ) =\ell_R(R/(b)).
\] 
Since $S$ is also a Cohen-Macaulay $R$-module of rank one, we obtain that $\ell_R(S/aS)=\ell_R(R/(a))$ and $\ell_R(S/bS)=\ell_R(R/(b))$; hence, $\ell_R(S/aS) < \ell_R(S/bS)$. 

On the other hand, since $\alpha I=J\subseteq R$, we have $\alpha \in R:I\subseteq S$. It follows that $\alpha S\subseteq S$, that is, $aS\subseteq bS$. In particular, $\ell_R(S/aS) \ge  \ell_R(S/bS)$.
This is a contradiction. Hence, there is no partial trace ideal $J$ of $I$ such that $\ell_R(R/J)<\ell_R(R/I)$.

(2)$\Rightarrow$(3): Suppose that all $\fkm$-primary ideals have a principal reduction. 
We choose a non-zerodivisor $b\in I$ of $R$. By noting that $b(R:I)$ is a regular ideal of $R$, there exists a minimal reduction $(a)$ of $b(R:I)$. Then $(a)\subseteq b(R:I) \subseteq \ol{(a)} = a\ol{R}\cap R \subseteq a\ol{R}$ by Lemma \ref{f26}. It follows that 
\begin{align}\label{eq213}
R\subseteq (b/a)(R:I)=R:(a/b)I \subseteq \ol{R}.
\end{align}
Set $J:=(a/b)I$. We note that $J$ is an ideal of $R$ since $a/b\in R:I$. We further obtain that 
\[
\ell_R(R/J) = h(J) = h(I) = \ell_R(R/I),
\]
where the first equality follows by applying the result (3)$\Rightarrow$(2) to $J$ (see \eqref{eq213}), the second equality follows from $J\cong I$, and the third equality follows from the assumption. Note that $bJ=aI$ by the definition of $J$. Then, by considering the same diagram as \eqref{eq261} and noting the equations $\ell_R(R/(a))=\ell_R(I/aI)$ and $\ell_R(J/bJ) =\ell_R(R/(b))$, we have $\ell_R(R/(a))=\ell_R(R/(b))$. On the other hand, since $b/a \in (b/a)(R:I)\subseteq \ol{R}$ by \eqref{eq213}, the extension ring $R[b/a]$ of $R$ is finitely generated as an $R$-module. Set $S:=R[b/a]$. Since $S$ is a Cohen-Macaulay $R$-module of rank one, we obtain that $\ell_R(S/aS) = \ell_R(R/(a))=\ell_R(R/(b)) = \ell_R(S/bS)$. On the other hand, since $b/a\in R[b/a] = S$, we have $(b/a)S\subseteq S$, that is, $bS\subseteq aS$. Combining the inclusion $bS\subseteq aS$ with the equality $\ell_R(S/aS) = \ell_R(S/bS)$, we have $aS = bS$, that is, $(b/a)S=S$. By multiplying $S$ to the inclusion $(b/a)(R:I) \subseteq \ol{R}$, we obtain that
\[
(b/a)(R:I)S \subseteq S\ol{R} = \ol{R}.
\] 
Since $R:I\subseteq (R:I)S = (b/a)(R:I)S$, we obtain that $R:I\subseteq \ol{R}$ as desired.

(4)$\Rightarrow$(3): Suppose that all $\fkm$-primary ideals have a principal reduction. We choose a minimal reduction $(a)$ of $I$. Then, $(a)$ is also a reduction of $\tr_R(I)$ by the assumption. It follows that $\tr_R(I) =(R:I)I\subseteq \ol{(a)}\subseteq a\ol{R}$ by Lemma \ref{f26}. Thus, $(R:I)a^{-1}I\subseteq \ol{R}$. We note that $a^{-1}I \ol{R} =\ol{R}$. Indeed, we have $R\subseteq a^{-1}I\subseteq \ol{R}$ since $(a)\subseteq I\subseteq \ol{(a)}\subseteq a\ol{R}$ by Lemma \ref{f26}. By multiplying $\ol{R}$ to the inclusion, we have $a^{-1}I \ol{R} =\ol{R}$. Therefore, by multiplying $\ol{R}$ to $(R:I)a^{-1}I\subseteq \ol{R}$,  we obtain that 
\[
(R:I)\ol{R} = (R:I)a^{-1}I\ol{R}\subseteq  \ol{R}.
\]
It follows that $R:I\subseteq \ol{R}:\ol{R} = \ol{R}$.
\end{proof}

\begin{rem}
Maitra proved the equivalence of (2)$\Leftrightarrow$(3) of Theorem \ref{t26} under the additional assumptions that $R$ is a domain, $\widehat{R}$ is reduced, and $\ol{R}$ is a discrete valuation ring (\cite[Theorem 2.5]{sm}). Especially, the assumption that $\ol{R}$ is a discrete valuation ring is crucial in his proof. Indeed, Maitra repeatedly asked in his paper if the characterization holds without his additional assumptions, see \cite[after Theorem B, after Theorem 2.5, and Conclusion (2)]{sm}. 
\end{rem}

\begin{cor}\label{c113}
Let $(R, \fkm)$ be a Cohen-Macaulay local ring of dimension one and let $I$ be a regular ideal of $R$. Let $b\in I$ be a non-zerodivisor of $R$. 
Suppose that we can choose a reduction $(a)$ of $b(R:I)$ (for example, the residue field $R/\fkm$ is infinite or $\ol{R}$ is local with the same residue field as that of $R$, see Remark \ref{tuesday}). Then, $J:=(a/b)I$ is a partial trace ideal of $I$.
\end{cor}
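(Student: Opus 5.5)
The argument is essentially the first half of the proof of (2)$\Rightarrow$(3) in Theorem \ref{t26}, isolated as a statement. We show that $J$ satisfies condition (3) of Theorem \ref{t26}, so that $J$ is a partial trace ideal of itself. Since $J\cong I$, this makes $J$ a partial trace ideal of $I$.

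First, $J$ is an ideal of $R$. Indeed, $(a)\subseteq b(R:I)$, so $a/b\in R:I$ and hence $J=(a/b)I\subseteq R$. Next we check that $a$ is a non-zerodivisor, so that $J\cong I$ via multiplication by $a/b$. The ideal $b(R:I)$ is regular, since it contains $b$ (because $1\in R:I$). A reduction of a regular ideal is itself regular: if $(a)$ consisted of zerodivisors, then $a\in\fkp$ for some $\fkp\in\Ass(R)$. Then $b(R:I)\subseteq\ol{(a)}\subseteq\sqrt{(a)}\subseteq\fkp$, which contradicts regularity.

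Now Lemma \ref{f26}, applied to the regular ideal $b(R:I)$ with reduction $(a)$, gives $(a)\subseteq b(R:I)\subseteq \ol{(a)}= a\ol{R}\cap R\subseteq a\ol{R}$. Dividing by $a$ yields
\[
R\subseteq (b/a)(R:I)\subseteq \ol{R}.
\]
Since $a/b$ is a unit of $\rmQ(R)$, we have $(b/a)(R:I)=R:(a/b)I=R:J$. Hence $R:J\subseteq\ol{R}$. Moreover, $J$ is regular because it is isomorphic to the regular ideal $I$ via a unit of $\rmQ(R)$.

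By Theorem \ref{t26} (3)$\Rightarrow$(2), $J$ is a partial trace ideal of $J$, that is, $\ell_R(R/J)=h(J)$. Since $J\cong I$, we have $h(J)=h(I)$, and $J=\Im(\cdot a/b)$ for the map $\cdot a/b\in\Hom_R(I,R)$. Therefore $\ell_R(R/J)=h(I)$, and $J$ is a partial trace ideal of $I$.

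The only delicate point is the regularity of $a$, which is needed to identify $(b/a)(R:I)$ with $R:J$ and to get $J\cong I$. The argument above via the radical handles it, and the remaining steps are direct applications of Lemma \ref{f26} and Theorem \ref{t26}.
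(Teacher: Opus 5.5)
Your proposal is correct and follows essentially the same route as the paper: you use Lemma \ref{f26} to get $R\subseteq (b/a)(R:I)=R:J\subseteq \ol{R}$, and then apply Theorem \ref{t26} (3)$\Rightarrow$(2) to $J$. You also spell out two points the paper leaves implicit: that $a$ is a non-zerodivisor (via $\ol{(a)}\subseteq\sqrt{(a)}$), and that being a partial trace ideal of $J$ transfers to $I$ because $J\cong I$ and $J=\Im(\cdot a/b)$.
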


\begin{proof}
Since $(a)\subseteq b(R:I) \subseteq \ol{(a)} = a\ol{R}\cap R$, we have 
\[
R\subseteq (b/a)(R:I)=R:(a/b)I \subseteq \ol{R}.
\]
Hence, $R:J\subseteq \ol{R}$ and the assertion follows from Theorem \ref{t26} (3)$\Rightarrow$(2).
\end{proof}

The following provides a formula to compute $h(I)$ for analytically irreducible rings. Let $(R, \fkm)$ be a Cohen-Macaulay local ring of dimension one. Suppose that $\ol{R}$ is a discrete valuation ring with the maximal ideal $\fkn$. Suppose that $R/\fkm\cong \ol{R}/\fkn$. For a fractional ideal $J$, we denote by 
\[
v(J):=\min\{ v(x) : x\in J\},
\]
the {\it value} of $J$, where $v(x)$ denotes the {\it normalized valuation} associated to $\ol{R}$ and is defined as follows:
\[
v:\rmQ(R) \to \mathbb{Z}\cup\{\infty\}; a/b \mapsto \ell_{\ol{R}}(\ol{R}/a\ol{R}) - \ell_{\ol{R}}(\ol{R}/b\ol{R})
\] 
for $a,b\in R$ such that $b$ is a non-zerodivisor of $R$.

\begin{rem}\label{tuesday}
Let $(R, \fkm)$ be a Cohen-Macaulay local ring of dimension one and let $I$ be a regular ideal of $R$. Suppose that $\ol{R}$ is a discrete valuation ring with the maximal ideal $\fkn$ and $R/\fkm\cong \ol{R}/\fkn$. If $b\in I$ with $v(b)=v(I)$, then $(b)$ is a reduction of $I$. 
\end{rem}

\begin{proof}
By the choice of $b$, we have $I \overline{R}=b\overline{R} =\fkn^{v(b)}$. It follows that 
\[
(b)\subseteq I\subseteq I \overline{R} \cap R = b\ol{R} \cap R = \ol{(b)} 
\]
by Lemma \ref{f26}. Hence, $(b)$ is a reduction of $I$ (\cite[Corollary 1.2.5]{HS}). 
\end{proof}

\begin{lem}\label{l219e}
Let $(R, \fkm)$ be a Cohen-Macaulay local ring of dimension one and let $I$ be a regular ideal of $R$. Suppose that $\ol{R}$ is a discrete valuation ring with the maximal ideal $\fkn$ and $R/\fkm\cong \ol{R}/\fkn$. Let $a,b$ be non-zerodivisors of $R$ such that $a/b\in R:I$. Set $\alpha =a/b$. Then, $\ell_R(R/\alpha I)=\ell_R(R/I) +v(\alpha)$. 
\end{lem}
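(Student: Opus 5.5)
The plan is to compare lengths along the chain of inclusions $bJ = aI \subseteq I \subseteq R$ and $aI\subseteq \alpha I \subseteq R$, where $J:=\alpha I$. This is the same diagram \eqref{eq261} used in the proof of Theorem \ref{t26}. First note that $J=\alpha I\subseteq R$ is an ideal because $\alpha\in R:I$. Also $bJ=aI$.

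Since $a$ is a non-zerodivisor, $aI\cong I$ is a regular ideal. Hence $I/aI$ and $R/aI$ have finite length, and so does $R/J$, because $J\supseteq aI$.

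Computing $\ell_R(R/aI)$ through $I$ gives
\[
\ell_R(R/aI)=\ell_R(R/I)+\ell_R(I/aI)=\ell_R(R/I)+\ell_R(R/(a)).
\]
Here the second equality uses \cite[Theorem 14.8]{matsu}, exactly as in Theorem \ref{t26}: $I$ is a Cohen-Macaulay module of rank one (a regular ideal in a one-dimensional Cohen-Macaulay ring). Computing instead through $J$, the ideal $J\cong I$ is also a Cohen-Macaulay ideal of rank one, so
\[
\ell_R(R/aI)=\ell_R(R/bJ)=\ell_R(R/J)+\ell_R(J/bJ)=\ell_R(R/J)+\ell_R(R/(b)).
\]
Subtracting the two expressions yields
\[
\ell_R(R/\alpha I)=\ell_R(R/I)+\ell_R(R/(a))-\ell_R(R/(b)).
\]

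It then remains to identify $\ell_R(R/(a))-\ell_R(R/(b))$ with $v(\alpha)=\ell_{\ol R}(\ol R/a\ol R)-\ell_{\ol R}(\ol R/b\ol R)$. I would show that $\ell_R(R/(x))=\ell_{\ol R}(\ol R/x\ol R)$ for every non-zerodivisor $x\in R$.

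This step is the main point, since it needs $\ol R$ to be module-finite over $R$. That finiteness is implicit in the standing hypotheses: $\ol R$ is a DVR, hence Noetherian, and the definition of $v$ presupposes a well-behaved setting. I would argue as follows. When $\ol R$ is a finitely generated $R$-module, it is a Cohen-Macaulay $R$-module of rank one, so again \cite[Theorem 14.8]{matsu} gives $\ell_R(\ol R/x\ol R)=\ell_R(R/(x))$. Because $R/\fkm\cong\ol R/\fkn$, we have $\ell_R=\ell_{\ol R}$ on $\ol R$-modules of finite length.

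If module-finiteness is not available, I would instead work with a finite birational extension. Take $S=R[\alpha]$ if $\alpha$ is integral, or more generally argue via $\ell_R(\ol R/x\ol R)=\ell_R(\ol R/x\ol R)$ directly, using the exact sequence $0\to\ol R/R\xrightarrow{x}\ol R/R\to\cdots$ and the additivity of $\ell_R(\,\cdot\,/x\,\cdot\,)$ in the form $e(x;\ol R)=e(x;R)$. Both modules have rank one, and the multiplicity symbol $\ell(M/xM)-\ell(0:_Mx)$ is additive and vanishes on the finite-length quotients.

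Combining the two steps gives $\ell_R(R/\alpha I)=\ell_R(R/I)+v(\alpha)$.
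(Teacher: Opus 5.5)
Your main argument is the paper's proof. It uses the same diagram $aI=bJ\subseteq I,J\subseteq R$ and applies \cite[Theorem 14.8]{matsu} to $I$ and to $J\cong I$, which gives $\ell_R(R/\alpha I)=\ell_R(R/I)+\ell_R(R/(a))-\ell_R(R/(b))$. It then applies the same theorem to $\ol R$ and uses $R/\fkm\cong\ol R/\fkn$ to identify $\ell_R(R/(x))$ with $\ell_{\ol R}(\ol R/x\ol R)$. The paper likewise just asserts that $\ol R$ is a finitely generated $R$-module; it announces this part as being about analytically irreducible rings. With that hypothesis your argument is complete.

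\textbf{The error is in how you handle that finiteness.}

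\emph{It is not implicit in the stated hypotheses.} That $\ol R$ is a DVR, hence Noetherian, does not make it module-finite over $R$. For a one-dimensional local domain, module-finiteness of $\ol R$ is equivalent to $\widehat R$ being reduced.

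\emph{The fallback cannot work.} Multiplication by $x$ on $0\to R\to\ol R\to\ol R/R\to 0$ and the snake lemma give $\ell_R(R/xR)-\ell_R(\ol R/x\ol R)=-\chi(x;\ol R/R)$. It is true that $\chi(x;-)$ vanishes on modules of finite length. But $\ol R/R$ has finite length only when $\ol R$ is already finitely generated, so this step is circular.

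\emph{In fact the lemma is false without finiteness.} Take Nagata's example: $\mathrm{char}\,K=p$ with $[K:K^p]=\infty$, $V=K^p[[x]][K]$, $c\in K[[x]]\setminus V$, and $R=V[c]$.
\begin{itemize}
\item $R$ is local, free of rank $p$ over $V$, with residue field $K$.
\item $\ol R=\rmQ(R)\cap K[[x]]$ is a DVR with residue field $K$, and $v(x)=1$ in it.
\item Nevertheless $\ell_R(R/xR)=p$.
\end{itemize}
So the formula fails already for $I=R$ and $\alpha=x$.

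You should state module-finiteness of $\ol R$ (equivalently, $\widehat R$ a domain) as a hypothesis, as the paper tacitly does, and delete the fallback paragraph.
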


\begin{proof}
Set $J=\alpha I$. Then, $aI=bJ$ by definition. By considering the diagram \eqref{eq261} and the equations $\ell_R(I/aI) = \ell_R(R/(a))$ and $\ell_R(J/bJ) = \ell_R(R/(b))$, we get $\ell_R(R/\alpha I) = \ell_R(R/I) + \ell_R(R/(a)) - \ell_R(R/(b))$. 

Since $\ol{R}$ is a finitely generated $R$-module of rank one, we also have 
\begin{center}
$\ell_R(R/(a)) = \ell_R(\ol{R}/a\ol{R}) =v(a)$ \quad and \quad $\ell_R(R/(b)) = \ell_R(\ol{R}/b\ol{R})= v(b)$. 
\end{center}

Hence, the assertion holds.
\end{proof}

\begin{cor}
Let $(R, \fkm)$ be a Cohen-Macaulay local ring of dimension one and let $I$ be a regular ideal of $R$. Suppose that $\ol{R}$ is a discrete valuation ring with the maximal ideal $\fkn$ and $R/\fkm\cong \ol{R}/\fkn$. Then, $h(I)=\ell_R(R/I) + v(R:I)$.
\end{cor}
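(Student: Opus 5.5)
The plan is to compute $h(I)$ directly from Lemma \ref{l219e}. Since $I$ is regular, every $f\in \Hom_R(I,R)$ is multiplication by some $\alpha\in R:I$, with $\Im f=\alpha I$. So $h(I)=\inf\{\ell_R(R/\alpha I)\mid \alpha\in R:I\}$.

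First, we may restrict to those $\alpha$ for which $\ell_R(R/\alpha I)$ is finite. Indeed, $\ol{R}$ is a DVR, so $R$ is a domain: $R\subseteq \ol{R}$ and $R$ is reduced with a single minimal prime, since $\ol{R}$ is the normalization of a CM ring. Hence every nonzero $\alpha=a/b\in R:I$ has $a,b$ non-zerodivisors. The zero map contributes $\ell_R(R)=\infty$ and can be discarded, because the family is nonempty: $1\in R:I$. In particular $h(I)<\infty$.

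For every nonzero $\alpha\in R:I$, Lemma \ref{l219e} gives $\ell_R(R/\alpha I)=\ell_R(R/I)+v(\alpha)$. Therefore
\[
h(I)=\ell_R(R/I)+\min\{v(\alpha)\mid \alpha\in R:I,\ \alpha\neq 0\}=\ell_R(R/I)+v(R:I).
\]
The minimum exists because $v$ takes integer values and, on $R:I\subseteq b^{-1}R$ for a non-zerodivisor $b\in I$, these values are bounded below by $-v(b)$.

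The only delicate point is justifying that $R$ is a domain, so that Lemma \ref{l219e} applies to every nonzero $\alpha$. If one prefers to avoid this, Lemma \ref{l110} gives the same conclusion for any $\alpha$ achieving a finite colength, since then $\alpha I\cong I$ forces the numerator to be a non-zerodivisor. The infimum can then be taken only over such $\alpha$, and the same formula follows.
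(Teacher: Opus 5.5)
Your proof is correct, and it takes a more direct route than the paper's.

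The paper's argument goes through the general theory:
\begin{itemize}
\item It picks a non-zerodivisor $b\in I$ and an element $a\in b(R:I)$ with $v(a)=v(b(R:I))$.
\item It uses Remark \ref{tuesday} to see that $(a)$ is a reduction of $b(R:I)$.
\item It invokes Corollary \ref{c113}, hence the implication (3)$\Rightarrow$(2) of Theorem \ref{t26} with its birational-extension argument via $S=R[R:I]$, to conclude that $J=(a/b)I$ is a partial trace ideal.
\item Only then does it apply Lemma \ref{l219e} to compute $\ell_R(R/J)=\ell_R(R/I)+v(R:I)$.
\end{itemize}

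You instead use the fact that $R$ is a domain, which is immediate because $R$ is a subring of the DVR $\ol{R}$; that one-line reason is all you need. Consequently Lemma \ref{l219e} applies to every nonzero $\alpha\in R:I$. So $h(I)$ is literally $\ell_R(R/I)$ plus the minimum of $v$ on $R:I$, and no appeal to Theorem \ref{t26} is needed.

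Your route is shorter and self-contained given Lemma \ref{l219e}. It also yields, with no extra work, Corollary \ref{corcor} and the DVR case of Corollary \ref{c113}: the partial trace ideals of $I$ are exactly the ideals $\alpha I$ with $\alpha\in R:I$ and $v(\alpha)=v(R:I)$.

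The paper's route instead presents the formula as a specialization of its general existence result for partial trace ideals. That result, Corollary \ref{c113}, does not need a valuation at all; it only needs a principal reduction.

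Your fallback remark relies on the proof of Lemma \ref{l110}, which uses only finiteness of $\ell_R(R/\alpha I)$, rather than on its statement. This is harmless, and the domain argument already makes it unnecessary.
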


\begin{proof}
Choose a non-zerodivisor $b\in I$ of $R$.  We also choose $a\in b(R:I)$ such that $v(a)=v(b(R:I))$. By Remark \ref{tuesday}, $(a)$ is a reduction of $b(R:I)$. Set $J:=(a/b)I$. By Corollary \ref{c113}, $J$ is a partial trace ideal of $I$. By Lemma \ref{l219e},
\begin{align}
h(I)=\ell_R(R/J) = \ell_R(R/I) + v(a/b) = \ell_R(R/I) + v(a)-v(b).
\end{align}
On the other hand, we have $v(a)= v(b(R:I)) = v(b) + v(R:I)$. Hence, we obtain $h(I)=\ell_R(R/I) + v(R:I)$ as desired.
\end{proof}

The following answers a question of Maitra, which is posed in his another paper \cite[Question 5.1]{Mai2}. That is, for a given regular ideal $I$, how many partial trace ideals can exist?

\begin{cor}\label{corcor}
Let $(R, \fkm)$ be a Cohen-Macaulay local ring of dimension one and let $I$ be a regular ideal of $R$. Suppose that $\ol{R}$ is a discrete valuation ring with the maximal ideal $\fkn$ and $R/\fkm\cong \ol{R}/\fkn$. Suppose that $J$ is a partial trace ideal of $I$. 
Then, 
\begin{align*}
\{\text{partial trace ideals of $I$}\} = \{\alpha J \mid \alpha\in R:J, v(\alpha) =0\}.
\end{align*}
\end{cor}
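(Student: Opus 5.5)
The plan is to show the two inclusions separately, using Lemma \ref{l110}, Lemma \ref{l219e}, and the fact that partial trace ideals of $I$ and of $J$ coincide.

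First, since $J$ is a partial trace ideal of $I$, Lemma \ref{l110} gives $J\cong I$. Concretely, $J=\beta I$ for some $\beta\in\rmQ(R)$ that is a unit of $\rmQ(R)$. Consequently $\Hom_R(I,R)$ and $\Hom_R(J,R)$ correspond via multiplication by $\beta$, i.e.\ $R:I=\beta(R:J)$. In particular the set of images $\{\Im f\mid f\in\Hom_R(I,R)\}$ equals $\{\gamma J\mid \gamma\in R:J\}$, so $h(I)=h(J)$. Moreover, the partial trace ideals of $I$ are exactly the ideals $\gamma J$ with $\gamma\in R:J$ and $\ell_R(R/\gamma J)=h(I)=\ell_R(R/J)$.

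Next I would translate the length condition into a valuation condition via Lemma \ref{l219e}, applied to the regular ideal $J$ in place of $I$. This needs $\gamma=a/b$ with $a,b$ non-zerodivisors.
\begin{itemize}
\item If $\gamma$ is written with $a$ a zerodivisor, then $\gamma J$ is killed by a nonzero element. Hence $\gamma J$ lies in an associated prime, so $\ell_R(R/\gamma J)=\infty$, exactly as in the proof of Lemma \ref{l110}.
\item For $\gamma$ with $a$ a non-zerodivisor, Lemma \ref{l219e} gives $\ell_R(R/\gamma J)=\ell_R(R/J)+v(\gamma)$.
\end{itemize}
Hence $\gamma J$ is a partial trace ideal of $I$ if and only if $v(\gamma)=0$. (Here $v(\gamma)\ge 0$ automatically, since $J$ is partial, but we only need equality.)

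For the inclusion $\supseteq$: given $\alpha\in R:J$ with $v(\alpha)=0$, $\alpha$ is a unit in $\ol{R}$, so its numerator is a non-zerodivisor. Then $\ell_R(R/\alpha J)=\ell_R(R/J)=h(I)$ and $\alpha J$ is the image of the map $\cdot\alpha\beta$ in $\Hom_R(I,R)$. For $\subseteq$: any partial trace ideal $K$ of $I$ is $K=\gamma J$ with $\gamma\in R:J$, and the length equality forces $v(\gamma)=0$.

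The only delicate point is the bookkeeping about zerodivisor numerators and the identification $R:I=\beta(R:J)$. Both are routine once Lemma \ref{l110} is in hand, so I expect no real obstacle.
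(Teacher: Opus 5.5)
Your proposal is correct and follows essentially the paper's argument: you use Lemma \ref{l110} to write every partial trace ideal of $I$ as $\gamma J$ with $\gamma\in R:J$, and then Lemma \ref{l219e} applied to $J$ to turn the length condition into $v(\gamma)=0$. Your zerodivisor bookkeeping is harmless but not needed, since $\ol{R}$ being a discrete valuation ring forces $R\subseteq\ol{R}$ to be a domain; on the other hand, you helpfully make explicit that $\alpha J$ is the image of $\cdot\alpha\beta\in\Hom_R(I,R)$, which the paper leaves implicit in the $\supseteq$ direction.
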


\begin{proof}
($\supseteq$): This follows from the equations $\ell_R(R/\alpha J) =\ell_R(R/J) +v(\alpha) =\ell_R(R/J)=h(I)$ by Lemma \ref{l219e}.

($\subseteq$): Let $L$ be a partial trace ideal of $I$. By Lemma \ref{l110}, $L\cong I\cong J$. It follows that there exists $\alpha\in \rmQ(R)$ such that $L=\alpha J$. Since $\alpha J =L\subseteq R$, $\alpha \in R:J$. Furthermore, by Lemma \ref{l219e}, we get 
\[
\ell_R(R/J)=h(I) =\ell_R(R/L) =\ell_R(R/J) +v(\alpha).
\]
Hence, $v(\alpha)=0$ as desired.
\end{proof}


\section{Discussions on the {\it h}-invariant of the canonical module}\label{sec3}

Throughout this section, unless otherwise noted, let $(R,\m)$ be a Cohen-Macaulay local ring admitting the canonical module $\omega_R$. In this section we focus our attention on the partial trace ideal of the canonical module and $h(\omega_R)$. 

\begin{rem} \label{pp31} (cf. \cite[Theorem 4.6(1)]{Mai2})
Consider the following conditions:
\begin{enumerate}[\rm(1)]  
\item $R$ is Gorenstein. 
\item $h(\omega_R)=0$.
\item $h(\omega_R)=\ell_R(R/\tr_R(\omega_R))<\infty$.
\end{enumerate}
Then, $(1)\Leftrightarrow (2) \Rightarrow (3)$ hold. If $\dim R>0$, then all the conditions are equivalent. 
\end{rem}     

\begin{proof} 
$(1)\Leftrightarrow (2)$ and $(1)\Rightarrow (3)$ are clear. 


$(3)\Rightarrow (1)$: Suppose that $\dim R>0$. Note that $\ell_R(R/\tr_R(\omega_R))<\infty$ implies $\tr_R(\omega_R)$ is $\m$-primary, hence $R$ is Gorenstein on the punctured spectrum, hence $R$ is generically Gorenstein since $\dim R>0$. The assumption also means that $f(\omega_R)=\tr_R(\omega_R)$ for some $f \in \hom_R(\omega_R,R)$, i.e., $R$ is of Teter type in the sense of \cite{teter}. By \cite[Theorem 1.1]{teter}, $R$ is Gorenstein. 
\end{proof}

The implication (3)$\Rightarrow$(2) of Remark \ref{pp31} does not hold if $\dim R=0$ as shown in \cite{teter}.

\begin{ex} (\cite[Example 2.10]{teter})
Let $K[x,y]$ be the polynomial ring over a field $K$. Set $R=K[x,y]/(x^4,y^4,x^2y^2)$. Then $\tr_R(\omega_R)=(x^2, y^2)$ and there exists a surjective homomorphism $\omega_R \to \tr_R(\omega_R)$. Hence, $h(\omega_R) = \ell_R(R/\tr_R(\omega_R)) = 4$, but, $R$ is not Gorenstein. 
\end{ex}

If $R$ is not Gorenstein, the finiteness of $h(\omega_R)$ requires that $\dim R\le 1$ as follows.

\begin{rem}\label{p33}
Suppose that $\dim R>0$ and $R$ is not Gorenstein. 
If $h(\omega_R)<\infty$, then $\dim R=1$ and $R$ is generically Gorenstein, i.e., $R_\fkp$ is Gorenstein for all $\fkp\in \Ass (R)$. 
\end{rem}

\begin{proof}
By Proposition \ref{p21}(1)$\Rightarrow$(4), $R_\fkp$ is Gorenstein for all $\fkp\in \Ass(R)$. That is, $R$ is generically Gorenstein and $\omega_R$ is of rank one (\cite[Proposition 3.3.18]{bh1}). By Lemma \ref{addl29}, $\dim R\le 1$. 
\end{proof}

Due to Remarks \ref{pp31} and \ref{p33}, in dimension $\le 1$, $h(\omega_R)$ plays the role of an invariant measuring how close to being Gorenstein. Thus, one may expect a characterization of rings having a small number of $h(\omega_R)$. With this perspective, the following results are essentially known although the $h$-invariant is a new notion introduced by Maitra and thus is not explicitly stated in terms of the $h$-invariant in the literature.

\begin{fact}\label{hvk}
\begin{enumerate}[\rm(1)] 
\item (\cite[Theorem 2.5]{HV}): Assume that $2$ is a unit and $\mathrm{Soc}(R)\subseteq \fkm^{2}$. Then the following are equivalent.
\begin{enumerate}[\rm(a)]
    \item $h(\omega_R)=1$.
    \item There exists an Artin Gorenstein ring $S$ such that $R \cong S/(0):\fkm_{S}$. 
\end{enumerate}
\item (\cite[Theorem 1.4]{Ko}): Suppose that $\dim R=1$. Then the following are equivalent.
\begin{enumerate}[\rm(a)] 
\item $h(\omega_R)\le 2$.
\item Either $R$ is Gorenstein or there exists a Gorenstein subring $S$ of $R$ such that $\ell_{S}(R/S)=1$ and $\mathrm{Q}(S)=\mathrm{Q}(R)$.
\item Either $R$ is Gorenstein or there exists a one-dimensional Gorenstein local ring $(S,\fkn)$ such that $R \cong \mathrm{End}_{S}(\fkn)$.
\end{enumerate}
\end{enumerate}
\end{fact}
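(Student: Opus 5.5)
Both parts of Fact \ref{hvk} are results from the literature, so my plan is not to reprove them. Instead I would check that the hypotheses there, stated in terms of surjections or embeddings of $\omega_R$, say the same thing as the conditions on $h(\omega_R)$ given here. The general principle behind both checks is the following. By definition, $h(\omega_R)=n$ means that $n$ is the smallest colength of an ideal of the form $f(\omega_R)$ with $f\in\Hom_R(\omega_R,R)$. Also, $h(\omega_R)=0$ exactly when $R$ is Gorenstein, by Remark \ref{pp31}.

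For (1), in the Artinian setting of \cite{HV}, $h(\omega_R)=1$ means that $R$ is not Gorenstein and there is some $f$ with $\ell_R(R/f(\omega_R))=1$. A colength-one ideal is forced to be $\fkm$, so this says exactly that there is a surjection $\omega_R\to\fkm$ and $R$ is not Gorenstein. That is the definition of a Teter ring. I would then quote \cite[Theorem 2.5]{HV}: under the hypotheses that $2$ is a unit and $\mathrm{Soc}(R)\subseteq\fkm^2$, $R$ is Teter if and only if $R\cong S/(0):\fkm_S$ for some Artinian Gorenstein ring $S$. The only point to check is that the paper's definition of Teter agrees with the surjection $\omega_R\to\fkm$ formulation above, which is standard.

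For (2), assume $\dim R=1$ and $R$ is not Gorenstein; the Gorenstein case is covered by the disjunction in (b) and (c). If $h(\omega_R)<\infty$, then Remark \ref{p33} shows that $R$ is generically Gorenstein. Hence $\omega_R$ is isomorphic to an $\fkm$-primary ideal $K$, called a canonical ideal. By Lemma \ref{l110}, every partial trace ideal of $\omega_R$ is isomorphic to $K$, so it is itself a canonical ideal. Conversely, every canonical ideal $K'\subseteq R$ is the image of an embedding $\omega_R\to R$. Therefore
\[
h(\omega_R)=\min\{\ell_R(R/K')\mid K'\subseteq R \text{ is an ideal with } K'\cong\omega_R\}.
\]
Kobayashi's \cite[Theorem 1.4]{Ko} is formulated in terms of the existence of a canonical ideal of colength at most $2$, or equivalently in terms of $\omega_R$ mapping onto an ideal of colength at most $2$. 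With the displayed identity, his theorem becomes the equivalence (a)$\Leftrightarrow$(b)$\Leftrightarrow$(c).

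I expect the main obstacle to be bookkeeping in (2). One has to verify that Kobayashi's hypotheses match ours, in particular generic Gorensteinness, which Remark \ref{p33} supplies whenever $h(\omega_R)<\infty$. One also has to handle the case where $h(\omega_R)$ is infinite. In that case (a) fails, and I would argue that (b) and (c) fail as well. For (b): if $S\subseteq R$ is Gorenstein with $\ell_S(R/S)=1$ and $\rmQ(S)=\rmQ(R)$, then $\omega_R\cong S:_{\rmQ(S)}R$. This module embeds in $R$ with finite colength, so it has finite $h$-invariant. Condition (c) reduces to (b) by taking $S$ to be the Gorenstein ring in (c) and using the inclusion $S\subseteq\mathrm{End}_S(\fkn)=R$.
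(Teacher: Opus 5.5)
\textbf{Part (1), direction (b)$\Rightarrow$(a), has a genuine gap.} The step fails, and in fact the statement as written is false there.

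You correctly note that $h(\omega_R)=1$ means ``there is a surjection $\omega_R\to\fkm$ \emph{and} $R$ is not Gorenstein.'' You then let the word ``Teter'' absorb the non-Gorenstein clause, and quote \cite[Theorem 2.5]{HV} as if it recovered that clause from (b). It does not. What (b) yields, via Teter/Huneke--Vraciu, is only an epimorphism $\omega_R\to\fkm$, and (b) is compatible with $R$ being Gorenstein.

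Concretely, take $R=k[x]/(x^3)$ with $\operatorname{char}k\neq 2$. Then $\mathrm{Soc}(R)=(x^2)=\fkm^2$. Also $R\cong S/(0):\fkm_S$ with $S=k[x]/(x^4)$ Artinian Gorenstein, so (b) holds. But $\omega_R\cong R$, so $h(\omega_R)=0$ and (a) fails. More generally, the type of $S/\mathrm{Soc}(S)$ equals the embedding dimension of $S$, so (b) admits Gorenstein rings with principal maximal ideal.

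So no argument can close this step as stated. The correct formulation needs one of the following:
\begin{itemize}
\item the extra hypothesis that $R$ is not Gorenstein, or
\item replacing (a) by ``there is a surjection $\omega_R\to\fkm$.''
\end{itemize}
With the non-Gorenstein hypothesis your argument goes through. From (b), Huneke--Vraciu gives a surjection $\omega_R\to\fkm$, so $h(\omega_R)\le 1$, and $h(\omega_R)\neq 0$ by Remark~\ref{pp31}.

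\textbf{A smaller point in (1).} The statement is not restricted to Artinian rings, so you should dispose of $\dim R>0$ rather than passing to ``the Artinian setting.'' There (b) is impossible because it forces $R$ to be Artinian, and (a) is impossible by Proposition~\ref{nn34}(1). So the equivalence holds vacuously in that case.

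\textbf{Part (2) is fine.} It matches what the paper does: the paper gives no proof of Fact~\ref{hvk} beyond the citations. Your reduction of $h(\omega_R)$ to the minimal colength of a canonical ideal, via Remark~\ref{p33} and Lemma~\ref{l110}, is correct. Your handling of the case $h(\omega_R)=\infty$ through $\omega_R\cong S:R$ is exactly the argument of the paper's proof of Theorem~\ref{tbg}. That argument even gives $h(\omega_R)\le 2$ directly from (b), without appealing to \cite{Ko} for that direction.
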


\begin{rem}
If $\dim R=1$, then $h(\omega_R)\ne 1$. Indeed, if $h(\omega_R) = 1$, then there exists a surjection $\omega_R \to \fkm$. This is impossible if $\dim R=1$, see \cite[Theorem 1.1]{teter}. 
\end{rem}

Next, we want to state another characterization of rings with $h(\omega_R)=2$. First we need some preparatory results and notions.

\begin{dfn} (\cite[Definition 2.1]{hw}) 
Let $(R, \fkm)$ be a Noetherian local ring. Then, an ideal $I$ is called {\it weakly $\fkm$-full} if $(\m I:_R \m)=I$. 
\end{dfn}

By Nakayama's lemma, $(I\m:_R \m)\subseteq \m$ if neither $I=R$ nor $R$ is a field. Since $\m \subseteq (\m^2:_R \m)$, $\m$ is weakly $\m$-full if $R$ is not  a field.

For latter use, we also recall that an ideal $I$ of $R$ is called {\it Burch}  if $\m I\neq \m(I:_R \m)$ (\cite[Definition 2.1]{DKT}). 

\begin{prop}\label{weakm} 
Let $(R, \fkm)$ be a Noetherian local ring, but not a field. Let $I$ be an ideal of $R$ such that $R/I$ is a Gorenstein ring. 
If $I$ is not weakly $\m$-full and $\m^2\subseteq I$, then $\m^2=I\m$ and $\mu_R(\m)=1+\mu_R(I)$.  
\end{prop}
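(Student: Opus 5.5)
We will work in the Artinian Gorenstein ring $\bar R = R/I$ with maximal ideal $\bar\m = \m/I$. Since $\m^2\subseteq I$, we have $\bar\m^2=0$, and therefore $\bar\m \subseteq (0:_{\bar R}\bar\m)=\mathrm{Soc}(\bar R)$. The hypothesis that $R/I$ is Gorenstein means the socle is one-dimensional, so $\ell_R(\m/I)\le 1$. The case $I=\m$ is excluded, because $\m$ is weakly $\m$-full (the remark before the statement, using that $R$ is not a field). Hence $\ell_R(\m/I)=1$, i.e., $\m = I + (x)$ for some $x\in\m\setminus I$. We also get $\m^2 = I\m + x\m$ and $x\m\subseteq \m^2\subseteq I$.

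Next we use the failure of weak $\m$-fullness. Since $I \subseteq (\m I:_R\m)\subseteq \m$ (Nakayama, as noted before the statement) and $\ell_R(\m/I)=1$, the assumption $(\m I:_R\m)\ne I$ forces $(\m I:_R\m)=\m$, that is, $\m^2\subseteq \m I$. Combined with $\m I\subseteq \m^2$, this gives $\m^2=I\m$, which is the first assertion.

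For the count of generators, we compute $\mu_R(\m)=\ell_R(\m/\m^2)=\ell_R(\m/I\m)$. Using $\m^2 = I\m\subseteq I$, we split
\[
\ell_R(\m/I\m)=\ell_R(\m/I)+\ell_R(I/\m I) = 1+\mu_R(I),
\]
since $I\m=\m I$ and $\ell_R(I/\m I)=\mu_R(I)$. This gives $\mu_R(\m)=1+\mu_R(I)$.

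The only delicate point is ensuring that $\ell_R(\m/I)=1$ exactly, i.e., ruling out $I=\m$ and using that the socle of the Gorenstein Artinian ring $R/I$ has length one. Both are immediate from the hypotheses, so I expect no real obstacle.
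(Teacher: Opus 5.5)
Your proof is correct and follows essentially the same route as the paper. Both arguments first show $\ell_R(\m/I)=1$: you do it via the one-dimensional socle of $R/I$ containing $\m/I$, the paper via $R/I$ being an Artinian hypersurface. Both then use an element of $(\m I:_R\m)$ outside $I$ to get $\m^2=\m I$, and finish with the length count $\ell_R(\m/\m I)=\ell_R(\m/I)+\ell_R(I/\m I)$. Your sandwich argument $I\subsetneq(\m I:_R\m)\subseteq\m$ is just a slightly more streamlined phrasing of the paper's choice of $g\in(\m I:_R\m)\setminus I$ with $\m=(g)+I$.
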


\begin{proof} 
Since $I$ is not weakly $\m$-full, $I\ne \m$. Since $\m^2 \subseteq I$, $R/I$ is an Artinian ring of minimal multiplicity. Since $R/I$ is also Gorenstein and non-regular (as $R/I$ is not a field), so we get that $R/I$ is an Artinian hypersurface; hence, the maximal ideal of $R/I$ is principal. It follows that $\m/I=[(f)+I]/I$ for every $f\in R$ such that $f \in \m \setminus (\fkm^2+I)$. By the assumption that $\m^2\subseteq I$, $\m^2+I=I$. Since $I$ is not weakly $\m$-full, so we can choose $g\in (\m I:_R \m)\setminus I$. 
Then, $\fkm=(g) +I$. Therefore, $\fkm^2=\fkm g +\fkm I = \fkm I$ by the choice of $g$. 
It follows that the canonical homomorphism $I/\fkm I \to \fkm /\fkm^2$ is injective since $\fkm^2\cap I=\fkm^2 =\fkm I$. Hence, $\mu_R(\m)=1+\mu_R(I)$.
\end{proof}

We recall that $R$ is called {\it nearly Gorenstein} if $\m \subseteq \tr(\omega_R)$ (\cite[Definition 2.2]{hbs}).

\begin{prop}\label{nn34} 
Suppose $\dim R>0$. Then the following hold. 
\begin{enumerate}[\rm(1)] 
\item $h(\omega_R)\ne 1$. 
\item If $h(\omega_R)=2$, then $\m=\tr_R(\omega_R)$, that is, $R$ is nearly Gorenstein, but not Gorenstein.
\end{enumerate}
\end{prop}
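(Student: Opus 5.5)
For the first assertion I argue by contradiction. Suppose $h(\omega_R)=1$. Then $h(\omega_R)$ is finite and nonzero, so $R$ is not Gorenstein by Remark \ref{pp31} (1)$\Leftrightarrow$(2). Since $\dim R>0$, Remark \ref{p33} gives $\dim R=1$.

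A partial trace ideal $J=f(\omega_R)$ of $\omega_R$ satisfies $\ell_R(R/J)=1$. Hence $J=\fkm$, so there is a surjection $\omega_R\to\fkm$. Exactly as in the preceding remark, this is impossible in dimension one by \cite[Theorem 1.1]{teter}.

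If one prefers to make the last step more explicit, one can argue as follows. $R$ is generically Gorenstein (Remark \ref{p33}), so $\omega_R$ is a maximal Cohen--Macaulay module of rank one. The argument in the proof of Lemma \ref{addl29} then shows that $f$ is injective, so $\omega_R\cong\fkm$. One then invokes the fact that $\fkm$ is never a canonical module of a one-dimensional Cohen--Macaulay local ring, again \cite[Theorem 1.1]{teter}. This citation is the only nontrivial input for (1).

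For the second assertion, suppose $h(\omega_R)=2$. Then $R$ is not Gorenstein by Remark \ref{pp31}, since $h(\omega_R)\neq 0$. In particular $\omega_R$ has no free summand, so $\tr_R(\omega_R)\subseteq\fkm$ by Fact \ref{r12}(3).

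Choose a partial trace ideal $J=f(\omega_R)$ with $\ell_R(R/J)=2$. Since $J\subseteq\tr_R(\omega_R)\subseteq\fkm$, we get
\[
1=\ell_R(R/\fkm)\le \ell_R(R/\tr_R(\omega_R))\le \ell_R(R/J)=2 .
\]
Suppose $\ell_R(R/\tr_R(\omega_R))=2$. Then $h(\omega_R)=\ell_R(R/\tr_R(\omega_R))<\infty$. Because $\dim R>0$, Remark \ref{pp31} (3)$\Rightarrow$(1) forces $R$ to be Gorenstein, a contradiction. Hence $\ell_R(R/\tr_R(\omega_R))=1$, that is, $\tr_R(\omega_R)=\fkm$. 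So $R$ is nearly Gorenstein but not Gorenstein.

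This part is a pure length count. The only real obstacle in the whole proposition is the dimension-one non-existence of a surjection $\omega_R\to\fkm$ used in (1), which I would take directly from \cite{teter}.
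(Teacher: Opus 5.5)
Your proof is correct and matches the paper. Part (2) is exactly the paper's squeeze $1\le \ell_R(R/\tr_R(\omega_R))\le h(\omega_R)=2$ combined with Remark \ref{pp31} (3)$\Rightarrow$(1). For (1), the paper runs the same squeeze to get $\ell_R(R/\tr_R(\omega_R))=h(\omega_R)=1$, contradicting Remark \ref{pp31}. That rests on the same \cite[Theorem 1.1]{teter} input you use via the surjection $\omega_R\to\fkm$, so your detour through Remark \ref{p33} is harmless but unnecessary.

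One small caveat about your ``more explicit'' variant: the claim that $\fkm$ is never a canonical module in dimension one is false for a discrete valuation ring. It has to be stated for non-regular $R$, which does hold in your situation because you already know $R$ is not Gorenstein.
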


\begin{proof}
Suppose that $\dim R>0$. Note that $0\le \ell_R(R/\tr_R(\omega_R)) \le  h(\omega_R)$. 

(1): Suppose that $h(\omega_R)=1$. $R$ is not Gorenstein by Proposition \ref{pp31}. Since $R=\tr_R(\omega_R)$ if and only if $R$ is Gorenstein, we get $\ell_R(R/\tr_R(\omega_R)) =  h(\omega_R) =1$ by the above inequality. This is a contradiction by Proposition \ref{pp31} (1) $\Leftrightarrow$ (3) since $\dim R>0$. 

(2): $R$ is not Gorenstein by Proposition \ref{pp31}. By the same reason as above, we have $0< \ell_R(R/\tr_R(\omega_R)) <  h(\omega_R)\le 2$. It follows that $\m=\tr_R(\omega_R)$.
\end{proof}

Finally, we also need the notion of almost Gorenstein rings and its connection with nearly Gorenstein rings.

\begin{dfn}{\rm (\cite[Theorem 3.11]{GMP})}\label{d37}
Let $(R, \fkm)$ be a one-dimensional Cohen-Macaulay local ring having the canonical module $\omega_R$. Suppose that there exists an ideal such that $I\cong \omega_R$ and its minimal reduction $(b)\subseteq I$ (e.g., $R$ is generically Gorenstein and the residue field $R/\fkm$ is infinite). Then $R$ is called {\it almost Gorenstein} if $\fkm I=\fkm b$.
\end{dfn}

\begin{fact}{\rm (\cite[Theorem 6.6]{hbs})}\label{f38}
Let $(R, \fkm)$ be a one-dimensional Cohen-Macaulay local ring having the canonical module $\omega_R$. Suppose that the residue field is infinite. Consider the following condition. 
\begin{enumerate}[\rm(1)] 
\item $R$ is nearly Gorenstein.
\item $R$ is almost Gorenstein. 
\end{enumerate}
Then, (2)$\Rightarrow$(1) holds. (1)$\Rightarrow$(2) holds if $R$ has minimal multiplicity. 
\end{fact}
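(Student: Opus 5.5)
The plan is to normalize the canonical module as a fractional ideal squeezed between $R$ and $\ol{R}$, and to rewrite both conditions as containments of fractional ideals. Since the residue field is infinite and $R$ is generically Gorenstein (which one may assume, for instance via Proposition \ref{p21} when $\tr_R(\omega_R)$ is $\fkm$-primary), choose an ideal $I\cong \omega_R$ with a minimal reduction $(b)$, as in Definition \ref{d37}. Set $K:=b^{-1}I$. Then $R\subseteq K\subseteq \ol{R}$ by Lemma \ref{f26} and $K\cong\omega_R$. With this normalization:
\begin{enumerate}[\rm(i)]
\item $\tr_R(\omega_R)=(R:K)K$ by Fact \ref{r12}(1);
\item $R:K\subseteq R$ because $1\in K$;
\item almost Gorenstein means $\fkm I=\fkm b$, i.e.\ $\fkm K=\fkm$, i.e.\ $\fkm K\subseteq \fkm$, since $1\in K$ gives the reverse inclusion.
\end{enumerate}

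For (2)$\Rightarrow$(1): if $\fkm K\subseteq \fkm\subseteq R$, then $\fkm\subseteq R:K$. Hence $\tr_R(\omega_R)=(R:K)K\supseteq \fkm K\supseteq \fkm$, using $1\in K$ once more. So $R$ is nearly Gorenstein. This direction should be immediate.

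For (1)$\Rightarrow$(2) under minimal multiplicity, assume $R$ is not Gorenstein, since otherwise there is nothing to prove. Then $K\ne R$, so $R:K\subseteq \fkm$ and $\tr_R(\omega_R)\subseteq\fkm$. Nearly Gorenstein then gives $(R:K)K=\fkm$.
\begin{enumerate}[\rm(a)]
\item Minimal multiplicity gives a reduction $x$ with $\fkm^2=x\fkm$. Put $B:=\fkm:\fkm=x^{-1}\fkm$, so that $\fkm=xB$ and $B=R:\fkm$ (as $R$ is not a DVR). The target condition $\fkm K\subseteq\fkm$ is equivalent to $K\subseteq B$.
\item Show that $R:K$ is a $B$-module: $B(R:K)K=B\fkm=\fkm\subseteq R$. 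Since $R:K\subseteq\fkm=xB$, we may write $R:K=xL$ with $L$ a $B$-submodule of $B$.
\item From $(R:K)K=\fkm=xB$ deduce $LK=B$. Also $BK\subseteq R:(R:K)$.
\end{enumerate}

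The main obstacle is the final step: extracting $K\subseteq B$ from $LK=B$, that is, showing $L$ contains a unit of $B$ (equivalently $L=B$ after scaling). My first attempt would be a length comparison. I would compute $\ell_R(R/\tr_R(\omega_R))=1$ against $\ell_R(B/R)=\rmr(R)$ and $\ell_R(K/R)$, and use the canonical duality $K:(K:X)=X$ together with $R:K=K:K^2$. The aim is to force $L\not\subseteq \fkm B$, i.e.\ $L$ is not contained in the Jacobson radical of $B$. If that fails, I would instead invoke the known characterization of almost Gorenstein rings of minimal multiplicity via $K\subseteq \fkm:\fkm$, as in \cite{GMP}, and verify that this containment holds.
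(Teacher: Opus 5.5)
The paper does not prove this statement; it is quoted from \cite[Theorem 6.6]{hbs}. So your proposal has to stand on its own.

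\textbf{What is correct.} Your normalization $R\subseteq K\subseteq\ol{R}$ is fine. The direction (2)$\Rightarrow$(1) is correct. Steps (a)--(c) of the converse are also correct, ending at $LK=B$ with $L=x^{-1}(R:K)$ an ideal of $B=\fkm:\fkm$.

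There are two small slips. First, $\tr_R(\omega_R)\subseteq\fkm$ does not follow from $R:K\subseteq\fkm$, since $K\not\subseteq R$. It does follow from Fact \ref{r12}(3), because $R$ is not Gorenstein. Second, in (2)$\Rightarrow$(1) you should take $I$ and $b$ to be the ones witnessing the almost Gorenstein property.

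\textbf{The gap.} The argument for (1)$\Rightarrow$(2) stops exactly at its decisive step, passing from $LK=B$ to $L=B$. Neither fallback you propose closes it:
\begin{enumerate}
\item The length comparison is never actually specified.
\item Forcing $L\not\subseteq\fkm B$, or $L$ not inside the Jacobson radical of $B$, is strictly weaker than $L=B$, because $B$ is only semilocal. Moreover, $\fkm B=\fkm$ can be smaller than that radical.
\item Invoking a characterization ``$K\subseteq\fkm:\fkm$'' from \cite{GMP} just restates your target $\fkm K\subseteq\fkm$.
\end{enumerate}

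\textbf{The missing idea.} Use that $K\subseteq\ol{R}$ and that $\ol{R}$ is integral over $B$. Since $B=\fkm:\fkm$ is module-finite over $R$, we have $R\subseteq B\subseteq\ol{R}$, with $\ol{R}$ integral over $B$.

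Suppose $L\neq B$. Then $L\subseteq\fkn$ for some maximal ideal $\fkn$ of $B$. By lying over, there is a prime $\mathfrak{N}$ of $\ol{R}$ with $\mathfrak{N}\cap B=\fkn$. Hence $B=LK\subseteq\fkn K\subseteq\fkn\ol{R}\subseteq\mathfrak{N}$, which is absurd since $1\in B$.

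Therefore $L=B$, that is, $R:K=xB=\fkm$. Consequently $\fkm K=(R:K)K=\tr_R(\omega_R)=\fkm$, which is exactly the almost Gorenstein condition. With this step inserted, your argument becomes a complete and elementary proof, and no length computation is needed.
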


\begin{fact} (cf. \cite[Lemma 5.2]{dk}) \label{ega}
Let $(R, \fkm)$ be a one-dimensional Cohen-Macaulay local ring. Then, $R$ has minimal multiplicity if and only if $\fkm^2\cong \fkm$. 
\end{fact}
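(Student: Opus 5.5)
The plan is to prove both directions through the Hilbert function of $\fkm$, using the standard facts for a one-dimensional Cohen--Macaulay local ring $R$: $\mu_R(\fkm^n)=\ell_R(\fkm^n/\fkm^{n+1})$, this number equals $\rme(R)$ for all $n\gg 0$, and $\mu_R(\fkm)\le \rme(R)$ with equality exactly when $R$ has minimal multiplicity. Recall that in dimension one minimal multiplicity means $\rme(R)=\mu_R(\fkm)$, that is, embedding dimension equal to multiplicity.

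\emph{``If'' direction.} Assume $\fkm^2\cong\fkm$. Both are regular ideals, hence fractional ideals, so by the isomorphism $Y:X\cong\Hom_R(X,Y)$ from the setup there is $\alpha\in\rmQ(R)$ with $\fkm^2=\alpha\fkm$. Multiplying this equality repeatedly by $\fkm$ gives
\[
\fkm^{n+1}=\alpha\fkm^{n}=\alpha^{n}\fkm \quad\text{for all } n\ge 1,
\]
so $\fkm^{n}\cong\fkm$ for every $n\ge1$. Consequently $\mu_R(\fkm)=\mu_R(\fkm^n)$ for all $n$. For $n\gg0$ the right-hand side equals $\ell_R(\fkm^n/\fkm^{n+1})=\rme(R)$. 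Hence $\mu_R(\fkm)=\rme(R)$, which is minimal multiplicity. This direction needs no hypothesis on the residue field.

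\emph{``Only if'' direction.} Assume $R$ has minimal multiplicity. If $R/\fkm$ is infinite, $\fkm$ has a principal minimal reduction $(x)$ with $x$ a non-zerodivisor. The classical characterization of minimal multiplicity in dimension one (Sally, Abhyankar) gives $\fkm^2=x\fkm$, and multiplication by $x$ yields $\fkm\cong x\fkm=\fkm^2$. If the residue field is finite, pass to $R'=R[X]_{\fkm R[X]}$. This is a faithfully flat local extension with maximal ideal $\fkm R'$ and infinite residue field, and it preserves both the Cohen--Macaulay property and the Hilbert function. So $R'$ has minimal multiplicity and $\fkm^2R'\cong\fkm R'$. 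It then remains to descend the isomorphism to $R$.

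The descent is the main obstacle. I would handle it via the colon description. Since $\fkm$ and $\fkm^2$ are fractional ideals, an isomorphism $\fkm\cong\fkm^2$ exists if and only if $\fkm^2=\beta\fkm$ for some $\beta\in(\fkm^2:\fkm)$. By flatness, $(\fkm^2:\fkm)\otimes_R R'=\fkm^2R':\fkm R'$. The set of elements of $\fkm^2R':\fkm R'$ giving an isomorphism onto $\fkm^2R'$ is the set of $\beta$ with $\beta\fkm R'+\fkm^3R'=\fkm^2R'$, by Nakayama's lemma. This is the complement of a proper subspace condition on the finite-dimensional $R'/\fkm R'$-vector space $(\fkm^2R':\fkm R')\otimes R'/\fkm R'$, induced from the corresponding condition over $R/\fkm$. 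The condition over $R/\fkm$ is nonempty because it becomes nonempty after the field extension to $R'/\fkm R'$, and the surjectivity condition $\beta\fkm+\fkm^3=\fkm^2$ is detected after faithfully flat base change. So we obtain $\beta\in\fkm^2:\fkm$ with $\beta\fkm=\fkm^2$, hence $\fkm\cong\fkm^2$.

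If this linear-algebra descent becomes delicate, the fallback is to cite the known result that isomorphism of finitely generated modules descends along $R\to R[X]_{\fkm R[X]}$.
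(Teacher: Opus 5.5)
Your ``if'' direction ($\fkm^2=\alpha\fkm$ forces $\fkm^n\cong\fkm$, hence $\mu_R(\fkm)=\mu_R(\fkm^n)=\rme(R)$ for $n\gg0$) is correct, and so is the infinite-residue-field case of ``only if''. The paper gives no proof here; it only cites \cite[Lemma 5.2]{dk}.

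The gap is the descent from $R'=R[X]_{\fkm R[X]}$ back to $R$. Put $C=\fkm^2:\fkm$ and $k=R/\fkm$. The set of $\bar\beta\in C/\fkm C$ with $\beta\fkm+\fkm^3=\fkm^2$ is where the $k$-linear map $\fkm/\fkm^2\to\fkm^2/\fkm^3$, $\bar m\mapsto\overline{\beta m}$, is bijective (both spaces have dimension $\rme(R)=\mu_R(\fkm)$). That is the non-vanishing locus of a determinant of degree $\mu_R(\fkm)$ in the coordinates of $\bar\beta$, not the complement of a linear subspace. Already for $R=k[[x,y]]/(xy)$ one has $C=\fkm$, and $\beta=ax+by$ works if and only if $ab\neq0$, so the bad set is a union of two lines. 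Over a finite field a nonzero polynomial can vanish at every rational point. Hence ``nonempty over $k(X)$'' does not imply ``nonempty over $k$''; this is exactly why principal reductions can fail to exist over finite residue fields (e.g.\ for $\mathbb{F}_2[[x,y]]/(xy(x+y))$). So the decisive step of your descent is unjustified as written.

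You can fix this by using the structure available. Let $B=\fkm:\fkm$, a module-finite, hence semilocal, extension of $R$.
\begin{itemize}
\item Over $R'$, $\fkm'=\fkm R'$ satisfies $\fkm'^2=x\fkm'$. So $D=\fkm'/x$ satisfies $D\cdot D=D$ and $1\in D$; thus $D$ is a ring, $\fkm':\fkm'=D$, and $\fkm'=xD\cong D$.
\item Colons commute with flat base change, so $D=B\otimes_RR'$. Therefore $\fkm\otimes_B(B\otimes_RR')\cong\fkm R'\cong B\otimes_RR'$.
\item Since $B\to B\otimes_RR'$ is faithfully flat, $\fkm$ is a finitely generated flat, hence projective, $B$-module of constant rank one. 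As $B$ is semilocal, $\fkm=yB$ for some $y\in\fkm$.
\item Then $\fkm^2=y\fkm B=y\fkm\cong\fkm$. Here $y$ is a non-zerodivisor because $yB$ contains one.
\end{itemize}
Incidentally, this shows $(y)$ is a principal reduction of $\fkm$ even over a finite residue field.

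Your fallback, that isomorphism descends along $R\to R[X]_{\fkm R[X]}$, is also true. It is not a routine remark, though: it needs a Guralnick-type lifting result (isomorphism is detected modulo a high power of $\fkm$) plus a Krull--Schmidt argument over the Artinian quotients. It would require a precise reference.
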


Now we are ready to state and prove a characterization of rings with $h(\omega_R)=2$.

\begin{thm}\label{h=2} Let $(R,\m)$ be a Cohen-Macaulay local ring of dimension $1$ admitting the canonical module $\omega_R$. Consider the following conditions:

\begin{enumerate}[\rm(1)]
    \item $h(\omega_R)=2$. 
    \item $R$ is not a hypersurface, there exists an ideal $I$ of $R$ such that $I\cong \omega_R$ and $\m^2\subseteq I\subseteq \m$. 
    \item $R$ is not a hypersurface, there exists an ideal $I$ of $R$ such that $I\cong \omega_R$ and $\m^2=I\m$.
    \item $\tr_R(\omega_R)=\m$ and $\mu_R(\m)=1+\mathrm{r}(R)$. 
\end{enumerate}
Then, $(1)\Leftrightarrow(2)\Leftrightarrow (3)\Rightarrow (4)$ hold true. Furthermore, if $R$ has minimal multiplicity and the residue field $R/\fkm$ is infinite, then all the four conditions are equivalent. 
\end{thm}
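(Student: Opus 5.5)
We prove the implications in the order (1)$\Rightarrow$(2)$\Rightarrow$(3)$\Rightarrow$(1), then (3)$\Rightarrow$(4), and finally (4)$\Rightarrow$(1) under the extra hypotheses.

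\emph{(1)$\Rightarrow$(2).} Suppose $h(\omega_R)=2$. Then $R$ is not Gorenstein by Remark \ref{pp31}, so in particular it is not a hypersurface. By Remark \ref{p33}, $R$ is generically Gorenstein, so $\omega_R$ may be taken to be a regular ideal. Choose $f\in\Hom_R(\omega_R,R)$ whose image $I=\Im f$ satisfies $\ell_R(R/I)=2$. By Lemma \ref{l110}, $I\cong\omega_R$. Since $\ell_R(R/I)=2$, we have $\fkm^2\subseteq I\subseteq\fkm$.

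\emph{(2)$\Rightarrow$(3).} Since $R/I$ is Gorenstein with $\ell_R(R/I)\le 2$, the aim is to apply Proposition \ref{weakm}, for which we must show that $I$ is not weakly $\fkm$-full.
\begin{itemize}
\item If $I=\fkm$, then $\fkm\cong\omega_R$. Then $R$ would be Gorenstein, which contradicts $R$ not being a hypersurface (a Gorenstein ring with $\fkm\cong\omega_R$ has principal maximal ideal). So $I\neq\fkm$ and $\ell_R(R/I)=2$.
\item Suppose instead $I$ were weakly $\fkm$-full, i.e.\ $\fkm I:_R\fkm=I$. Pass to $\fkm I\subseteq I$ and use $\fkm I:\fkm$ inside $\omega$-duality: $(\fkm I:_R\fkm)\supsetneq I$ should follow from $\fkm(I:\fkm)\subseteq\fkm I$.
\item To obtain that inclusion, apply $K_R$-duality with $I\cong\omega_R$: we get $I:\fkm\cong \omega:\fkm$ and $I:(I:\fkm)=\fkm$, and one checks $\fkm(I:_R\fkm)\subseteq \fkm I$ using $\fkm^2\subseteq I$.
\end{itemize}
This step is where I expect the main difficulty; if it does not go through, I would fall back on the known fact that $\omega$-type ideals are not $\fkm$-full in non-Gorenstein rings. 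Once $I$ is known not to be weakly $\fkm$-full, Proposition \ref{weakm} gives $\fkm^2=I\fkm$.

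\emph{(3)$\Rightarrow$(1).} From $\fkm^2=I\fkm\subseteq I$ we get $\ell_R(R/I)\le 2$, so $h(\omega_R)\le 2$. Since $R$ is not a hypersurface, $\fkm\not\cong\omega_R$ and $R$ is not Gorenstein, whence $\ell_R(R/I)\neq 0$. Proposition \ref{nn34} excludes $h(\omega_R)=1$, and Remark \ref{pp31} excludes $h(\omega_R)=0$. Hence $h(\omega_R)=2$.

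\emph{(3)$\Rightarrow$(4).} By the equivalence just established, $h(\omega_R)=2$, so Proposition \ref{nn34} gives $\tr_R(\omega_R)=\fkm$. Proposition \ref{weakm} gives $\mu_R(\fkm)=1+\mu_R(I)=1+\rmr(R)$. To apply it, we need $I$ not weakly $\fkm$-full, which is the same point as in the step above.

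\emph{(4)$\Rightarrow$(1) under minimal multiplicity and infinite residue field.} Since $\tr_R(\omega_R)=\fkm$, $R$ is nearly Gorenstein. By Fact \ref{f38}, $R$ is then almost Gorenstein, so there are $I\cong\omega_R$ and $b\in I$ with $\fkm I=\fkm b$. Let $(t)$ be a minimal reduction of $\fkm$; minimal multiplicity gives $\fkm^2=t\fkm$. We want to replace $I$ by $(t/b)I$, which is then an ideal with $\fkm\cdot(t/b)I=t\fkm=\fkm^2$. For $(t/b)I\subseteq R$ we use $I\subseteq b\,\ol{R}\cap\ldots$; more concretely, check $(t/b)I\subseteq\fkm:\fkm$ and that it lies in $R$ via $\fkm(t/b)I=\fkm^2$. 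Finally, $\mu_R(\fkm)=1+\rmr(R)\ge 3$ rules out the hypersurface case, and (3) follows.
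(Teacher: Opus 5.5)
\textbf{Gap in (2)$\Rightarrow$(3).} This step is the heart of the theorem, and your proposal does not prove it. Once $I\neq\fkm$, the Artinian Gorenstein ring $R/I$ with $\fkm^2\subseteq I$ has $\ell_R(\fkm/I)=1$. Hence $I:_R\fkm=\fkm$ and $\fkm(I:_R\fkm)=\fkm^2$.

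So the inclusion $\fkm(I:_R\fkm)\subseteq \fkm I$ that you plan to ``check using $\fkm^2\subseteq I$'' is literally $\fkm^2=\fkm I$, the conclusion of (3). The hypothesis $\fkm^2\subseteq I$ does not yield $\fkm^2\subseteq\fkm I$.

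Likewise, $I\subseteq \fkm I:_R\fkm\subseteq\fkm$ and $\ell_R(\fkm/I)=1$. So ``$I$ is not weakly $\fkm$-full'' is equivalent here to $\fkm^2=\fkm I$, and routing through Proposition \ref{weakm} does not reduce the problem. You still need an input that uses $I\cong\omega_R$ and the non-regularity of $R$.

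Your fallback does not supply it. It is not a standard citable fact. It is also phrased for $\fkm$-fullness, which is stronger than weak $\fkm$-fullness, so its negation is too weak for what you need.

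The paper supplies this input via Burch ideals. An $\fkm$-primary weakly $\fkm$-full ideal is Burch. Then $R/\fkm$ is a direct summand of $I/aI\cong\omega_R/a\omega_R$ for some regular element $a$. This gives $R/\fkm$ finite injective dimension over $R/aR$, which forces $R$ to be regular.

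\textbf{How your duality idea can be completed.} Dualize the inclusion $\fkm I\subseteq\fkm^2$ instead. With $K=I$ one has $K:K=R$ and $K:(K:X)=X$ for fractional ideals $X$. Also $K:\fkm=\fkm$, since $\fkm^2\subseteq K\subsetneq\fkm$. Therefore
\[
K:\fkm^2=(K:\fkm):\fkm=\fkm:\fkm,\qquad K:\fkm K=(K:K):\fkm=R:\fkm .
\]
Since $R$ is not regular, $\fkm(R:\fkm)\ne R$; otherwise $\fkm$ would be invertible, hence principal. So $R:\fkm=\fkm:\fkm$, and applying $K:(-)$ gives $\fkm^2=\fkm K$. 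The case $I=\fkm$ is trivial. This would be a more elementary route than the paper's.

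\textbf{Smaller repairs.}
\begin{itemize}
\item In (3)$\Rightarrow$(1), ``not a hypersurface'' alone does not give ``not Gorenstein''; you must use (3). If $R$ were Gorenstein, then $I\cong R$ and $\fkm^2=I\fkm\cong\fkm$. By Fact \ref{ega}, $R$ would have minimal multiplicity and hence be a hypersurface.
\item In (3)$\Rightarrow$(4), non-weak $\fkm$-fullness is immediate from $\fkm^2=\fkm I$ and $I\ne\fkm$. It is not the same difficulty as in (2)$\Rightarrow$(3).
\item In (4)$\Rightarrow$(1), $(t/b)I\subseteq\fkm:\fkm$ does not place $(t/b)I$ inside $R$, because $\fkm:\fkm\supsetneq R$ when $R$ is not regular. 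Instead note $tI\subseteq\fkm I=\fkm b$, so $(t/b)I\subseteq\fkm$.
\end{itemize}
With that last fix, your (4)$\Rightarrow$(1) is a valid and slightly shorter alternative to the paper's length count. You produce $I'\cong\omega_R$ with $\fkm I'=t\fkm=\fkm^2$, use $\rmr(R)\ge 2$ to exclude hypersurfaces, and invoke (3)$\Rightarrow$(1).
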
 

\begin{proof} 
In any implication, we may assume that $R$ is not regular. 

$(1)\Rightarrow (2)$: By Proposition \ref{p33}, $R$ is generically Gorenstein. We can choose an ideal $J$ such that $J\cong \omega_R$ by \cite[Proposition 3.3.18]{bh1}. Let $I$ be a partial trace ideal of $J$. By Lemma \ref{l110}, $I\cong J \cong \omega_R$. It follows that $R/I$ is Artin Gorenstein Proposition \ref{p33}. On the other hand, since $\ell_R(R/I) = h(J) =h(\omega_R) =2$, $\fkm^2\subseteq I$. Hence, $\m^2\subseteq I\subseteq \m$ since $\ell_R(R/I) =2$. The assertion that $R$ is not a hypersurface is clear since $R$ is not Gorenstein. 

$(2)\Rightarrow (3)$: Notice that $R/I$ is a Gorenstein ring by \cite[Proposition 3.3.18]{bh1}. We will be done by Proposition \ref{weakm} if we can show that $I$ is not weakly $\m$-full. Suppose the contrary, that is, let $I$ be weakly $\m$-full. 

Since $I$ is $\m$-primary, so $I$ is Burch in $R$ by \cite[Lemma 4.3]{dk} . Then, by \cite[Lemma 3.7]{dk} (and the discussion following it), there is an $R$-regular (hence also $I$-regular) element $a\in \m$ such that $R/\m$ is a direct summand of $I/aI$. But $I/aI\cong \omega_R/a\omega_R$ has finite injective dimension over $R/aR$, hence $R/\m$ would have finite injective dimension over $R$, contradicting $R$ is non-regular. 

$(3)\Rightarrow (1)$: Note that $R/I$ has minimal multiplicity since $\m^2\subseteq I$. On the other hand, since $R/I$ is a Gorenstein ring (\cite[Proposition 3.3.18]{bh1}), $R/I$ is an Artinian hypersurface. Thus, $\m/I$ is a principal ideal of $R/I$. Then, since $\m^2\subseteq I$, we obtain that $\ell_R(R/I)=\ell_{R}((R/I)/(\fkm/I))+\ell_{R}(\fkm/I)=1+\mu_{R}(\m/I)=2$. 

It follows that $h(\omega_R)\le \ell(R/I)=2$. By Proposition \ref{nn34}(1), we get either $h(\omega_R)=0$ or $h(\omega_R)=2$. If $h(\omega_R)=0$, then $R$ is Gorenstein. Then $I\cong \omega_R\cong R$ and the hypothesis of (3) implies $\m^2=I\m\cong \m$, hence $R$ itself has minimal multiplicity by Fact \ref{ega}. Therefore, $R$ becomes a hypersurface, contrary to the hypothesis of (3). Thus $h(\omega_R)=2$.   

Hence, we prove $(1)\Leftrightarrow (2)\Leftrightarrow (3)$. Suppose that (1)-(3). $\tr_R(\omega_R)=\m$ follows by Proposition \ref{nn34}(2). Noting that $I$ is not weakly $\fkm$-full by the proof of $(2)\Rightarrow (3)$, $\mu_R(\m)=1+\mathrm{r}(R)$ follows by Proposition \ref{weakm} and the fact that $\textrm{r}(R)=\mu(\omega_R)=\mu(I)$.

$(4)\Rightarrow (1)$: Suppose that $R$ has minimal multiplicity and the residue field $R/\fkm$ is infinite. Then $R$ is almost Gorenstein by Fact \ref{f38}. Choose an $\fkm$-primary ideal $J$ such that $J\cong \omega_R$. We can choose a minimal reduction $(b)$ of $J$ since $R/\fkm$ is infinite. Then $\fkm J=\fkm b$ by Definition \ref{d37}. Let $(a)$ be a minimal reduction of $\fkm$ and set $I:=ab^{-1} J$. Then, $I$ is an ideal of $R$ since $I\subseteq \fkm b^{-1} J = b^{-1} \fkm b =\fkm$. It follows that 
\begin{align}\label{tired}
(a)\subseteq I \subseteq \fkm =\ol{(a)}
\end{align} 
since $(a)$ is a reduction of $\fkm$ and Fact \ref{f26}. On the other hand, we have $\ell_R(\fkm/(a))=\ell_R(R/(a)) -1 =\rme(R)-1 =\rmr(R)$, where the last equality follows from, for example, \cite[Fact 2.6]{HKS}. We further have 
\[
\ell_R(I/(a)) =\mu_R(I/(a)) =\mu_R(I) -1 =\mu_R(\omega_R)-1 =r(R)-1, 
\]
where the first equality follows from $\fkm^2=a\fkm\subseteq (a)$, the second equality follows from the fact that $(a)$ is a minimal reduction of $I$ (see \eqref{tired}). Therefore, we obtain that $\ell_R(\fkm/I) = \ell_R(\fkm/(a)) - \ell_R(I/(a)) = 1$. Therefore, 
\[
h(\omega_R) =h(I) \le \ell_R(R/I) =\ell_R(\fkm/I) +1 =2.
\]
Since $R$ is not Gorenstein, the assertion (1) holds by Proposition \ref{nn34}. 
\end{proof}

By Fact \ref{hvk}, it is natural to explore an upper bound of $h(\omega_R)$. For this purpose, we recall the notion of birational Gorenstein colength.

\begin{defn}
(\cite[Definition 1.3]{Ko}): Let $R$ be a one-dimensional Cohen-Macaulay local ring. Then, the invariant 
\[
bg(R):=\inf \{\ell_S(R/S) \mid S\subseteq R \text{ is a module finite}, \rmQ(S)=\rmQ(R), \text{ and $S$ is Gorenstein}\}
\]
is called the {\it birational Gorenstein colength} of $R$. 
\end{defn}

Using this notation, the result of Fact \ref{hvk} (2)(a) $\Leftrightarrow$ (b) can be rephrased as $h(\omega_R)\le 2$ $\Leftrightarrow$ $bg(R)\le 1$. With this observation, the following result provides a generalization of one direction of the result.

\begin{thm}\label{tbg}
Let $(R,\m)$ be a one-dimensional Cohen-Macaulay local ring having a canonical module $\omega_R$. If $R$ is generically Gorenstein, then $h(\omega_R)\leq 2bg(R)$. 
\end{thm}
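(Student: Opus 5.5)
Suppose $bg(R)=n<\infty$; otherwise there is nothing to prove. Fix a Gorenstein subring $S\subseteq R$, module-finite over $S$, with $\rmQ(S)=\rmQ(R)$ and $\ell_S(R/S)=n$. The approach is to realize $\omega_R$ as an ideal of $R$ coming from the conductor-type colon $S:R$, and to bound its colength in $R$ by $2n$. Then $h(\omega_R)\le 2n$ follows directly from the definition of $h$.

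First, $S$ is a one-dimensional Cohen--Macaulay local ring: it is local because $R$ is module-finite over it and $R$ is local. Since $S$ is Gorenstein, $\omega_S\cong S$, and therefore
\[
\omega_R\cong \Hom_S(R,S)\cong S:R,
\]
the colon being taken in $\rmQ(S)=\rmQ(R)$. Set $\mathfrak{c}:=S:R$. It is contained in $S\subseteq R$, because $1\in R$. It is an $R$-module, hence an ideal of $R$. It is regular, because $R$ is a finitely generated $S$-submodule of $\rmQ(S)$ and so admits a common denominator. Taking the inclusion $\mathfrak{c}\hookrightarrow R$ as the map in the definition of $h$, we get
\[
h(\omega_R)\le \ell_R(R/\mathfrak{c}).
\]

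Second, we compute this colength. We have $\ell_R(R/\mathfrak{c})\le \ell_S(R/\mathfrak{c})=\ell_S(R/S)+\ell_S(S/\mathfrak{c})$, where the inequality holds because an $R$-composition series is an $S$-filtration whose factors have $S$-length at least one. The remaining step is to show $\ell_S(S/\mathfrak{c})=\ell_S(R/S)$. For this, apply $\Hom_S(-,S)$ to $0\to S\to R\to R/S\to 0$. Because $S$ is Gorenstein of dimension one and $R/S$ has finite length, $\Hom_S(R/S,S)=0$. We also have $\Ext^1_S(R,S)=0$, since $R$ is a maximal Cohen--Macaulay $S$-module; it is torsion-free of depth one. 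This gives
\[
0\to S:R\to S\to \Ext^1_S(R/S,S)\to 0,
\]
and local duality gives $\ell_S(\Ext^1_S(R/S,S))=\ell_S(R/S)$. Hence $\ell_S(S/\mathfrak{c})=n$, so $\ell_S(R/\mathfrak{c})=2n$ and $h(\omega_R)\le 2n$.

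I expect the main care to be needed in the length comparison between $R$ and $S$, namely the residue field extension, where the inequality $\ell_R\le\ell_S$ works in our favor. A second point needing care is the identification $\omega_R\cong S:R$, where one must make sure the canonical module of $R$ really is $\Hom_S(R,\omega_S)$; this holds because $R$ is module-finite over the Cohen--Macaulay ring $S$ of the same dimension.

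The hypothesis that $R$ is generically Gorenstein appears to be needed only to ensure that $\omega_R$ embeds as an ideal and that the setting is nondegenerate. In this construction the ideal $S:R$ already exists once $S$ exists, so the hypothesis mainly matters for ensuring that a Gorenstein $S$ with $\rmQ(S)=\rmQ(R)$ can exist at all. If $bg(R)$ is infinite, the inequality is trivial.
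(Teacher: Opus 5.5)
Your proof is correct and follows essentially the same route as the paper: you identify $\omega_R\cong \Hom_S(R,S)\cong S:R$, dualize $0\to S\to R\to R/S\to 0$ to get $\ell_S(S/(S:R))=\ell_S(R/S)$, and bound $h(\omega_R)\le \ell_R(R/(S:R))$ by $\ell_S(R/(S:R))=2bg(R)$. The only difference is cosmetic: you use the inequality $\ell_R\le \ell_S$ where the paper writes the exact equality $\ell_R(-)=\ell_S(-)/\ell_S(R/\m)$, and you spell out the vanishing of $\Hom_S(R/S,S)$ and $\Ext^1_S(R,S)$, which the paper leaves implicit.
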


\begin{proof}
We may assume $bg(R)<\infty$. Hence, there exists a Gorenstein subring $S$ of $R$ such that $Q(S)=Q(R)$ and $\ell_S(R/S)=bg(R)<\infty$. Hence, $\dim S=\dim R=1$. Note that $S$ is local since $\m\cap S$ is the unique maximal ideal of $S$. Hence, $\omega_R \cong \Hom_S(R,S)\cong (S: R)$, see \cite[Theorem 3.3.7(b)]{bh1}. Since $(S:R)$ is an ideal of $R$, we obtain that 
\begin{align}\label{eq361}
h(\omega_R)\leq \ell_R(R/(S:R))=\dfrac{\ell_S(R/(S:R))}{\ell_S(R/\m)}, 
\end{align}
where the last equality holds since we have a module finite extension $S/(\m\cap S)\subseteq R/\m$. On the other hand, by applying the $S$-canonical dual $\Hom_S(-, S)$ to the exact sequence $0 \to S\to R \to R/S \to 0$, we have $S/(S:R) \cong \Ext_S^1(R/S, S)$. By \cite[Lemma 3.1.16 and Proposition 3.2.12]{bh1}, it follows that $\ell_S(S/(S:R))=\ell_S(R/S)$. Hence, we get $\ell_S(R/(S:R))=\ell_S(S/(S:R))+\ell_S(R/S)=2\ell_S(S/(S:R))=2bg(R)$. Combining this equation with the inequality \eqref{eq361}, we complete the proof. 
\end{proof}

  \section{three-generated numerical semigroup rings}\label{sec4}
  
In this section we give a formula for $h(\omega_R)$ in the case where $R$ is a numerical semigroup ring generated by $3$ elements. First of all, let us fix the notation, according to the terminology of numerical semigroups.

\begin{setting}
Let $\ell >0$ and let $0 < a_1, a_2, \ldots, a_\ell \in \mathbb Z$ be positive integers such that $\mathrm{GCD}~(a_1, a_2, \ldots, a_\ell)=1$. We set 
\[
H = \left<a_1, a_2, \ldots, a_\ell\right>=\left\{\sum_{i=1}^\ell c_ia_i \ \middle| \  0 \le c_i \in \mathbb{Z} ~\text{for~all}~1 \le i \le \ell \right\}
\]
and call it the {\it numerical semigroup} generated by $a_1, \dots, a_\ell$. Let $k[t]$ be the polynomial ring over a field $k$. We set
$$R = k[H] = k[t^{a_1}, t^{a_2}, \ldots, t^{a_\ell}]$$
in $k[t]$ and call it the {\it numerical semigroup ring} of $H$ over $k$. The ring  $R$ is a one-dimensional Cohen-Macaulay graded domain such that $\overline{R} = k[t]$ and $\m = (t^{a_1},t^{a_2}, \ldots, t^{a_\ell} )$. We call
\begin{align*}
	\mathrm{PF}(H)=&\{ \alpha \in \mathbb{N}\setminus H \mid \text{$\alpha+h\in H$ for all  $h\in H\setminus\{0\}$}\}\\
	=&\{ \alpha \in \mathbb{N}\setminus H \mid \fkm t^\alpha\subseteq R\}
\end{align*}
the set of {\it pseudo-Frobenius numbers of $H$}.

Let $P=k[X_1, \dots, X_\ell]$ be the polynomial ring over $k$. We consider $P$ to be a $\mathbb{Z}$-graded ring such that $P_0=k$ and $\deg X_i=a_i$ for all $1\le i \le \ell$. Let $\varphi: P \to R$ denote the homomorphism of graded $k$-algebra defined by $\varphi(X_i)=t^{a_i}$ for each $1\le i \le \ell$. 

\end{setting}

In this section, we focus on the case of $\ell=3$. For simplicity, let us write $X, Y, Z$ instead of $X_1$, $X_2$, $X_3$, respectively. We then have the following.

\begin{fact} {\rm (\cite{Herzog})}\label{f42}
	Suppose that $R$ is not a Gorenstein ring. Then, 
	$$\Ker \varphi=I_2\begin{pmatrix}   
		X^{\alpha} & Y^{\beta} & Z^{\gamma} \\
		Y^{\beta'} & Z^{\gamma'} & X^{\alpha'}
	\end{pmatrix}$$
	for some $0< \alpha, \beta, \gamma, \alpha', \beta', \gamma'\in \mathbb Z$, where $I_2(\mathbb{M})$ is an ideal generated by the $2 \times 2$ minors of the matrix  $\mathbb{M}$. 
\end{fact}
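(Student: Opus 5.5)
This is Herzog's theorem on three-generated semigroups, so I will reconstruct Herzog's argument rather than cite it. The plan is to study the ideal $\mathfrak{p}=\Ker\varphi$ through its degree-$a_i$ relations.

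The first step builds the three "critical binomials". For each $i$, let $c_i>0$ be the least integer such that $c_ia_i\in\langle a_j,a_k\rangle$, where $\{i,j,k\}=\{1,2,3\}$. Write $c_1a_1=r_{12}a_2+r_{13}a_3$, and similarly for $c_2a_2$ and $c_3a_3$, with $r_{ij}\ge 0$.

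The second step shows that, because $R$ is not Gorenstein, equivalently $H$ is not symmetric, all the $r_{ij}$ are strictly positive. Suppose instead that some $r_{ij}=0$, say $c_1a_1=r_{13}a_3$. Then I expect $\mathfrak{p}$ to be a complete intersection: after dividing by $\gcd(a_1,a_3)$ the semigroup becomes a gluing, and $\mathfrak{p}$ is generated by two elements, which forces $R$ to be Gorenstein. This is the classical dichotomy, and it is the step I expect to be the main obstacle. Concretely, I would show that $\mathfrak{p}$ is then generated by $X^{c_1}-Z^{r_{13}}$ together with one binomial involving $Y^{c_2}$. To do this I would use the minimality of $c_2$ and a degree argument with $d=\gcd(a_1,a_3)$, for which $c_2=d$.

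The third step, assuming all $r_{ij}>0$, proves the identities
\[
c_1=r_{21}+r_{31},\qquad c_2=r_{12}+r_{32},\qquad c_3=r_{13}+r_{23}.
\]
For the first one, consider $(r_{21}+r_{31})a_1$. The combination $c_2a_2+c_3a_3$ rewrites as $(r_{21}+r_{31})a_1+(r_{23}-c_3)a_3+(r_{32}-c_2)a_2$. Minimality of the $c_i$, together with uniqueness of representations in the range below the $c_i$, then forces $r_{21}+r_{31}\ge c_1$. Comparing with $c_1a_1$ and using positivity gives equality. The other two identities follow by symmetry.

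The fourth step concludes. With the identities in hand, set $\alpha=r_{21}$, $\alpha'=r_{31}$, $\beta=r_{32}$, $\beta'=r_{12}$, $\gamma=r_{13}$ and $\gamma'=r_{23}$, all positive. The $2\times2$ minors of the matrix in the statement are exactly the three critical binomials. It remains to show that they generate $\mathfrak{p}$. Let $\mathfrak{q}=I_2(\mathbb{M})$. Since $\mathbb{M}$ has positive entries, Hilbert–Burch gives that $P/\mathfrak{q}$ is Cohen–Macaulay of codimension $2$ whenever $\operatorname{ht}\mathfrak{q}=2$, and this holds because modulo $X$ the ideal contains powers of $Y$ and $Z$. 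Finally, $\mathfrak{q}\subseteq\mathfrak{p}$, both ideals are homogeneous, and localizing at $X$ shows they agree generically. Since $\mathfrak{q}$ is unmixed, it has no embedded components, and a multiplicity or degree comparison then yields $\mathfrak{q}=\mathfrak{p}$.
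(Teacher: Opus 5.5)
The paper gives no proof of this Fact; it only cites Herzog. So your outline has to stand on its own, and I am comparing it with Herzog's argument, whose strategy you follow.

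\textbf{Steps 1--3 (essentially fine, with small repairs).}

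Step 2 is only sketched, but it does go through. Suppose $c_1a_1=r_{13}a_3$. Then minimality of $c_1$ forces $a_2\in\langle a_1/d,a_3/d\rangle$: otherwise some $c<a_3/d$ has $ca_1\in\langle a_2,a_3\rangle$. Hence $c_2=d$, and $R$ is free over $k[t^{a_1},t^{a_3}]$ on $1,t^{a_2},\dots,t^{(d-1)a_2}$, so $R$ is a complete intersection.

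In Step 3, your displayed ``rewrite'' is actually an expression equal to $0$. The correct identity is $(r_{21}+r_{31})a_1=(c_2-r_{32})a_2+(c_3-r_{23})a_3$. What you need is $r_{32}<c_2$ and $r_{23}<c_3$, which follow from minimality of $c_3$ and $c_2$ respectively, not from any uniqueness of representations. The reverse inequalities come from adding the three defining relations, which uses all three inequalities at once.

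\textbf{Step 4 (genuine gap).} Your last sentence is where the entire content of Herzog's theorem lies, and it is not justified.

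Let $L=\{v\in\mathbb{Z}^3 : v\cdot(a_1,a_2,a_3)=0\}$, and let $L'\subseteq L$ be the sublattice spanned by the exponent vectors of the three minors. Modulo $X$ you get $\mathfrak{q}+XP=(X,Y^{c_2},Z^{c_3},Y^{r_{12}}Z^{r_{13}})$. Hence $\ell(P/(\mathfrak{q}+XP))=c_2c_3-r_{23}r_{32}$, whereas $\ell(P/(\mathfrak{p}+XP))=\ell(R/t^{a_1}R)=a_1$. A cross-product computation shows $c_2c_3-r_{23}r_{32}=\lambda a_1$ with $\lambda=[L:L']$. So any multiplicity or degree comparison succeeds exactly when $\lambda=1$, and nothing in Steps 1--3 gives this.

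``Localizing at $X$'' does not help either. After inverting $X$, the elements $Y$ and $Z$ become units modulo $\mathfrak{q}$, so $P_X/\mathfrak{q}_X\cong k[\mathbb{Z}^3/L']$, while $P_X/\mathfrak{p}_X\cong k[t^{\pm1}]\cong k[\mathbb{Z}^3/L]$. Moreover, $X$ is a nonzerodivisor on $P/\mathfrak{q}$, so $\mathfrak{q}_X=\mathfrak{p}_X$ is already equivalent to $\mathfrak{q}=\mathfrak{p}$.

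\textbf{The missing idea.} Use the minimality of the $c_i$ once more. Show that the elements $ya_2+za_3$ with $0\le y<c_2$, $0\le z<c_3$, and not both $y\ge r_{12}$ and $z\ge r_{13}$, are pairwise incongruent modulo $a_1$. Split according to the signs of $y-y'$ and $z-z'$:
\begin{itemize}
\item If the signs are opposite, or one difference is zero, you contradict the minimality of $c_2$ or $c_3$.
\item If the signs agree, the resulting multiple of $a_1$ is at least $c_1a_1$. Subtract the relation for $c_1a_1$; the remainder contradicts either the shape of the region or the minimality of $c_2$ or $c_3$.
\end{itemize}
This gives $c_2c_3-r_{23}r_{32}\le a_1$, hence $\lambda=1$.

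With the length equality in hand, you do not need Hilbert--Burch or unmixedness. Since $t^{a_1}$ is a nonzerodivisor on $R$, the sequence $0\to\mathfrak{p}/(\mathfrak{q}+X\mathfrak{p})\to P/(\mathfrak{q}+XP)\to R/t^{a_1}R\to 0$ is exact. Equal lengths then force $\mathfrak{p}=\mathfrak{q}+X\mathfrak{p}$, and graded Nakayama gives $\mathfrak{p}=\mathfrak{q}$. This is the same device the paper uses in its final example.
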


In what follows, we suppose that $\ell=3$ and $R$ is not a Gorenstein ring. Let 
\begin{center}
	$\Delta_1 = Z^{\gamma+\gamma'}-X^{\alpha'}Y^{\beta}$, $\Delta_2=X^{\alpha+\alpha'}-Y^{\beta'}Z^{\gamma}$, \ and\  $\Delta_3=Y^{\beta+\beta'}-X^{\alpha}Z^{\gamma'}$. 
\end{center}
By Hilbert-Burch's theorem,  a graded minimal $P$-free resolution of $R$ is given by 
\begin{align}\label{seq1}
	0\longrightarrow \begin{matrix} P(-m )\\ \oplus\\ P(-n)\end{matrix} \xrightarrow{\left[ \begin{smallmatrix}
			X^{\alpha} & Y^{\beta'}\\
			Y^{\beta} & Z^{\gamma'}\\
			Z^{\gamma} & X^{\alpha'}
		\end{smallmatrix} \right]} \begin{matrix} P(-d_1)\\ \oplus\\ P(-d_2)\\ \oplus\\ P(-d_3)\end{matrix} \xrightarrow{\left[\Delta_1~-\Delta_2~\Delta_3\right]} P\overset{\varphi }{\longrightarrow } R \longrightarrow 0,
\end{align}
where $d_1 = \deg\Delta_1 = a_3(\gamma + \gamma')$, $d_2 = \deg\Delta_2 = a_1(\alpha + \alpha')$, $d_3 = \deg\Delta_3 = a_2(\beta + \beta')$, $m = a_1\alpha + d_1 = a_2\beta + d_2 = a_3\gamma + d_3$, and $n = a_1\alpha' + d_3 = a_2\beta' + d_1 = a_3\gamma' + d_2$. 

Let $\omega_P=P(-d)$ be the graded canonical module of $P$, where $d$ is the $a$-invariant of $P$. By applying the functor $\Hom_P(-, \omega_P)$ to the exact sequence \eqref{seq1}, we obtain that 
  \begin{equation}\label{resolution}
  	\begin{matrix} P(d_1-d)\\ \oplus\\ P(d_2-d)\\ \oplus\\ P(d_3-d)\end{matrix}\xrightarrow{\left[ \begin{smallmatrix} X^{\alpha} & Y^{\beta} & Z^{\gamma} \\
  			Y^{\beta'} & Z^{\gamma'} & X^{\alpha'}
  		\end{smallmatrix} \right]} \begin{matrix} P(m  -d)\\ \oplus\\ P(n-d)\end{matrix}\longrightarrow \omega_R\longrightarrow 0
  \end{equation}
  since $\omega_R\cong \Ext_P^2(R, \omega_P)$ (\cite[Proposition 3.6.12]{bh1}). On the other hand, it is also known that 
  \[
  \omega_R\cong \sum_{f\in \mathrm{PF}(H)} Rt^f
  \]
  by \cite[Example (2.1.9)]{GW}. Note that $\mathrm{PF}(H)=\{f_1, f_2\}$ for some positive integers $f_1< f_2$ by Fact \ref{f42}. 
  Therefore, by looking at the degrees in \eqref{resolution}, we have 
  \[
  f_2-f_1=|n-m|=\begin{cases}
|a_1 \alpha  - a_2\beta'|\\
|a_2 \beta  - a_3 \gamma'|\\
|a_3 \gamma  - a_1 \alpha'|
\end{cases}.
  \]

It follows that $\omega_R\cong (1, t^{f_2-f_1})$ by forgetting the shift. 
Hence, we have the following (non-graded) isomorphisms:  

\begin{prop}\label{canon}
$\omega_R\cong (t^{a_1\alpha}, t^{a_2\beta'}) \cong (t^{a_2 \beta}, t^{a_3 \gamma'}) \cong (t^{a_3 \gamma}, t^{a_1 \alpha'})$. 
\end{prop}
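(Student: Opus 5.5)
The plan is to read the isomorphism off the two descriptions of $\omega_R$ obtained just before the statement: the graded presentation \eqref{resolution}, and the isomorphism $\omega_R\cong Rt^{f_1}+Rt^{f_2}$ from \cite[Example (2.1.9)]{GW}, where $\mathrm{PF}(H)=\{f_1<f_2\}$. The point is that a two-generated fractional ideal generated by two monomials is, up to multiplication by a monomial, determined by the difference of the exponents of its generators.

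\textbf{Step 1: the generator degrees.} Both descriptions give minimal homogeneous generating sets of the same graded module, up to a shift. By graded Nakayama, the multiset of degrees of a minimal homogeneous generating system is an invariant of a finitely generated graded $R$-module, and a shift translates all degrees by the same amount. In $\omega_R\cong Rt^{f_1}+Rt^{f_2}$ the minimal generators are $t^{f_1},t^{f_2}$, which are minimal since $f_1,f_2$ are distinct pseudo-Frobenius numbers and $f_2-f_1\notin H$. Their degrees differ by $f_2-f_1$. In \eqref{resolution}, after dualizing the shifts, the generators sit in degrees whose difference is $|n-m|$. Hence $f_2-f_1=|n-m|$. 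Because $f_1\neq f_2$, this also gives $m\neq n$, which we will use below.

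\textbf{Step 2: the three expressions for $n-m$.} Compare the two lists of formulas for $m$ and $n$ that share the same $d_i$:
\begin{align*}
m-n&=(a_1\alpha+d_1)-(a_2\beta'+d_1)=a_1\alpha-a_2\beta',\\
m-n&=(a_2\beta+d_2)-(a_3\gamma'+d_2)=a_2\beta-a_3\gamma',\\
m-n&=(a_3\gamma+d_3)-(a_1\alpha'+d_3)=a_3\gamma-a_1\alpha'.
\end{align*}
So each of $|a_1\alpha-a_2\beta'|$, $|a_2\beta-a_3\gamma'|$, $|a_3\gamma-a_1\alpha'|$ equals $f_2-f_1=:e>0$.

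\textbf{Step 3: conclude.} We have $\omega_R\cong t^{-f_1}(Rt^{f_1}+Rt^{f_2})=(1,t^{e})$. For any pair $p,q$ with $|p-q|=e$, multiplication by $t^{-\min(p,q)}$, a unit of $\rmQ(R)$, maps $(t^p,t^q)$ isomorphically onto $(1,t^{e})$. Applying this to the three pairs from Step 2 gives the three stated isomorphisms.

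The only delicate point is Step 1: one must make sure the degrees in \eqref{resolution} really are the degrees of a minimal generating set. This holds because the entries of the presentation matrix lie in the maximal homogeneous ideal, as all exponents are positive, so the presentation is minimal and the two summands of $P(m-d)\oplus P(n-d)$ map onto minimal generators of $\omega_R$.
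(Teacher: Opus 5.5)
Your proposal is correct and follows the paper's argument. The paper also compares the generator degrees in the dualized presentation \eqref{resolution} with those of $\omega_R\cong Rt^{f_1}+Rt^{f_2}$ to get $f_2-f_1=|n-m|$, reads off the three expressions for $m-n$ from the formulas for $m$ and $n$, and rescales each monomial pair by a monomial to $(1,t^{f_2-f_1})$. Your added justifications of minimality fill in details the paper leaves implicit: the presentation matrix has entries in the homogeneous maximal ideal, and $f_2-f_1\notin H$.
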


After rearranging $a_1, a_2, a_3$ and reordering the form of the matrix $\left(\begin{smallmatrix}   
	X^{\alpha} & Y^{\beta} & Z^{\gamma} \\
	Y^{\beta'} & Z^{\gamma'} & X^{\alpha'}
\end{smallmatrix}\right)$, we may assume that $a_1 \alpha$ is the smallest integer among $a_1\alpha$, $a_2 \beta$, $a_3 \gamma$, $a_2\beta'$, $a_3\gamma'$, and $a_1 \alpha'$.
 With the notations, we obtain the calculation for $h(\omega_R)$ as follows.

\begin{thm}\label{thm4.4}
Suppose that $a_1 \alpha$ is the smallest integer among $a_1\alpha$, $a_2 \beta$, $a_3 \gamma$, $a_2\beta'$, $a_3\gamma'$, and $a_1 \alpha'$. Then, $h(\omega_R)=\alpha \beta' (\gamma + \gamma')$.
\end{thm}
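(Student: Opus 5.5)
The plan is to realize $\omega_R$ concretely as the ideal $I=(t^{a_1\alpha}, t^{a_2\beta'})$ of Proposition \ref{canon}. We then show that $I$ is already a partial trace ideal of itself, so that $h(\omega_R)=h(I)=\ell_R(R/I)$. Finally we compute $\ell_R(R/I)$ from the Herzog presentation in Fact \ref{f42}. Since $\ol{R}=k[t]$ and everything is graded, we may pass to the localization (or completion) at the graded maximal ideal without changing lengths or colons. There $\ol{R}$ becomes a DVR with residue field $k$, so the corollary $h(I)=\ell_R(R/I)+v(R:I)$ applies, as does the criterion (3)$\Rightarrow$(2) of Theorem \ref{t26}. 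Thus it suffices to prove two things: $v(R:I)=0$, i.e.\ $R:I\subseteq \ol{R}$, and $\ell_R(R/I)=\alpha\beta'(\gamma+\gamma')$.

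For the length, write $x=t^{a_1}$, $y=t^{a_2}$, $z=t^{a_3}$, so that $I=(x^\alpha,y^{\beta'})R$ and $R/I\cong P/(X^\alpha, Y^{\beta'},\Delta_1,\Delta_2,\Delta_3)$. Modulo $(X^\alpha,Y^{\beta'})$ we have $\Delta_2=X^{\alpha+\alpha'}-Y^{\beta'}Z^\gamma\equiv 0$ and $\Delta_3=Y^{\beta+\beta'}-X^\alpha Z^{\gamma'}\equiv 0$. Hence
\[
R/I\cong \bigl(k[X,Y]/(X^\alpha,Y^{\beta'})\bigr)[Z]/(Z^{\gamma+\gamma'}-X^{\alpha'}Y^\beta).
\]
The last generator is monic in $Z$ of degree $\gamma+\gamma'$, so this is a free module of rank $\gamma+\gamma'$ over an Artinian ring of length $\alpha\beta'$. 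It follows that $\ell_R(R/I)=\alpha\beta'(\gamma+\gamma')$.

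For the colon, $R:I$ is a graded $R$-submodule of $k[t,t^{-1}]$ spanned by monomials $t^s$, so $R:I\subseteq\ol{R}$ amounts to the following: there is no $s<0$ with $a_1\alpha+s\in H$ and $a_2\beta'+s\in H$. This is the step I expect to be the main obstacle, and it is where the minimality of $a_1\alpha$ enters. Suppose such an $s$ exists, and write $t^{a_1\alpha+s}=\varphi(M)$ and $t^{a_2\beta'+s}=\varphi(M')$ for monomials $M,M'$ of $P$. Then $X^\alpha M'-Y^{\beta'}M\in\Ker\varphi$ is a homogeneous binomial of degree $a_1\alpha+a_2\beta'+s<a_1\alpha+a_2\beta'$. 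I will first cancel common variables, using that $\Ker\varphi$ is prime and contains no monomials. Then I compare with the generators $\Delta_1,\Delta_2,\Delta_3$. A nonzero binomial in a lattice ideal must contain one of the leading pure-power monomials $Z^{\gamma+\gamma'}$, $X^{\alpha+\alpha'}$, $Y^{\beta+\beta'}$, or a mixed monomial $X^{\alpha'}Y^{\beta}$, $Y^{\beta'}Z^\gamma$, $X^\alpha Z^{\gamma'}$, up to multiples. This should force a degree inequality contradicting the choice of $a_1\alpha$ as the minimum of $a_1\alpha,a_2\beta,a_3\gamma,a_2\beta',a_3\gamma',a_1\alpha'$.

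If this combinatorial argument becomes messy, the fallback is to use the generators directly: the degree $n-m$ description gives $f_2-f_1=|a_1\alpha-a_2\beta'|$. So $R:I\cong R:(1,t^{f_2-f_1})$ up to a shift, and I would check directly that the shift realizing $v=0$ is exactly the one attained by $I$.

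Once $R:I\subseteq\ol{R}$ is established, Theorem \ref{t26} (3)$\Rightarrow$(2) gives that $I$ is a partial trace ideal of itself. Combined with the length computation, this yields $h(\omega_R)=h(I)=\alpha\beta'(\gamma+\gamma')$.
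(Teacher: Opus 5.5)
Your plan is sound and differs from the paper's route. As written, though, the decisive step is only asserted: you say the lattice-ideal comparison ``should force a degree inequality'' but never prove that $R:I\subseteq\ol{R}$ for $I=(t^{a_1\alpha},t^{a_2\beta'})$. Your fallback only restates that claim. The step does close, and more simply than your sketch suggests: you need neither the cancellation of common variables nor divisibility by the six monomials.

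\begin{itemize}
\item \textbf{Degree bound.} $\Ker\varphi$ is generated by $\Delta_1,\Delta_2,\Delta_3$, which are homogeneous of degrees $d_1,d_2,d_3$ in the positively graded ring $P$. Hence every nonzero homogeneous element of $\Ker\varphi$ has degree at least $\min\{d_1,d_2,d_3\}$.
\item \textbf{The inequality.} The homogeneity relations are $d_2=a_2\beta'+a_3\gamma$, $d_3=a_2\beta+a_2\beta'=a_1\alpha+a_3\gamma'$ and $d_1=a_3\gamma+a_3\gamma'$.
\begin{itemize}
\item Minimality of $a_1\alpha$ gives $d_2\ge a_1\alpha+a_2\beta'$ and $d_3\ge a_1\alpha+a_2\beta'$ directly.
\item Since $a_3\gamma'-a_2\beta'=a_2\beta-a_1\alpha\ge 0$, it also gives $d_1\ge a_1\alpha+a_2\beta'$.
\end{itemize}
\item \textbf{Contradiction.} Your binomial $X^\alpha M'-Y^{\beta'}M$ is nonzero. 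Equality of the two monomials would force $X^\alpha\mid M$, which is impossible since $\deg M=a_1\alpha+s<a_1\alpha$. It is homogeneous of degree $a_1\alpha+a_2\beta'+s<a_1\alpha+a_2\beta'\le\min\{d_1,d_2,d_3\}$, which contradicts the degree bound.
\end{itemize}
With this inserted, your proof is complete.

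Compared with the paper: the paper quotes \cite[Proposition 6.3]{hbs}, which says $\tr_R(\omega_R)$ is generated by the six monomials $t^{a_1\alpha},\dots,t^{a_1\alpha'}$ coming from the Hilbert--Burch matrix. It reads off that $I$ has the same value as $\tr_R(\omega_R)$, so $I$ is a reduction of it, which is condition (4) of Theorem \ref{t26}. It then invokes (4)$\Rightarrow$(2), which needs the principal-reduction hypothesis (available here because $\ol{R}$ is local with residue field $k$).

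Your route instead proves condition (3) directly from the degrees in Herzog's resolution and uses only (3)$\Rightarrow$(2), which holds with no extra hypothesis. In effect you reprove the one piece of the trace computation that matters, namely $v(\tr_R(\omega_R))=a_1\alpha$, without citing it.

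Your length computation is also slightly more general than the paper's. You observe that $Z^{\gamma+\gamma'}-X^{\alpha'}Y^\beta$ is monic in $Z$, so $R/I$ is free of rank $\gamma+\gamma'$ over $k[X,Y]/(X^\alpha,Y^{\beta'})$. This does not use $\alpha\le\alpha'$, which the paper needs in order to discard the term $X^{\alpha'}Y^\beta$.
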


\begin{proof}
By \cite[Proposition 6.3]{hbs}, $\tr_R(\omega_R)=(t^{a_1\alpha}, t^{a_2\beta'}, t^{a_2 \beta}, t^{a_3 \gamma'}, t^{a_3 \gamma}, t^{a_1 \alpha'})$. It follows that $(t^{a_1\alpha}, t^{a_2\beta'})$ is a reduction of $\tr_R(\omega_R)$ since $a_1\alpha$ is the smallest integer in the value of elements in $\tr_R(\omega_R)$. Thus, by Theorem \ref{t26} and Proposition \ref{canon}, the canonical ideal $(t^{a_1\alpha}, t^{a_2\beta'})$ is a partial trace ideal of $\omega_R$.
Hence, we obtain that 
\begin{align*}
h(\omega_R) =& \ell_R(R/(t^{a_1\alpha}, t^{a_2\beta'})) = \ell_P\left(P/[I_2\left(\begin{smallmatrix}   
	X^{\alpha} & Y^{\beta} & Z^{\gamma} \\
	Y^{\beta'} & Z^{\gamma'} & X^{\alpha'}
\end{smallmatrix}\right) + (X^{\alpha}, Y^{\beta'})]\right)\\
=& \ell_P(k[X, Y, Z]/(X^{\alpha}, Y^{\beta'}, Z^{\gamma + \gamma'}-X^{\alpha'}Y^{\beta}))\\
=&\ell_P(k[X, Y, Z]/(X^{\alpha}, Y^{\beta'}, Z^{\gamma + \gamma'})),
\end{align*}
where the last equality follows from the minimality of $a_1 \alpha$ (recall that $a_1\alpha \le a_1\alpha'$). Thus, we have $h(\omega_R)=\alpha \beta' (\gamma + \gamma')$.
\end{proof}

\begin{ex}
Let $n\ge 1$ and let $R=k[t^{2n+1}, t^{2n+2}, t^{2n+3}]$ be a numerical semigroup ring over a field $k$. 
Then, $h(\omega_R) =n+1$. 
\end{ex}

\begin{proof}
It is straightforward to check that $\Ker \varphi$ contains $J:=I_2 \left(\begin{smallmatrix} X & Y & Z^n \\ Y & Z & X^{n+1}\end{smallmatrix}\right)$. We then have an exact sequence $0\to \Ker \varphi/J \to k[X, Y, Z]/J \to R \to 0$. Since $\varphi(X)=t^{a_1}$ is a non-zerodivisor of $R$, we get 
\[
0\to \Ker \varphi/(J+X\cdot \Ker \varphi) \to k[X, Y, Z]/(J+(X)) \to R/(t^{a_1}) \to 0.
\]
Noting that $\ell(k[X, Y, Z]/(J+(X))) = \ell(R/(t^{a_1})) =a_1$, we obtain that $\Ker \varphi/(J+X\Ker \varphi)=0$. By (graded) Nakayama's lemma, $\Ker \varphi=J$. Therefore, by Theorem \ref{thm4.4}, we get that $h(\omega_R) = n+1$. 
\end{proof}


\begin{thebibliography}{99}


\bibitem{AAM}
{\sc Ananthnarayan, H.; Avramov, Luchezar L.; Moore, W. Frank}. Connected sums of Gorenstein local rings. {\em J. Reine Angew. Math.} {\bf 667} (2012), 149--176.

\bibitem{Ba}
{\sc Bass, Hyman}. On the ubiquity of Gorenstein rings. {\em Math. Z.} {\bf 82} (1963), 8--28.



\bibitem{bh1}
{\sc Bruns, Winfried; Herzog, J\"{u}rgen}. Cohen-Macaulay rings. Cambridge Studies in Advanced Mathematics, {\bf 39}. Cambridge University Press, Cambridge, 1993. {\rm xii}+403 pp.

\bibitem{hw}
{\sc Celikbas, Olgur; Goto, Shiro; Takahashi, Ryo; Taniguchi, Naoki}. On the ideal case of a conjecture of Huneke and Wiegand. {\em Proc. Edinb. Math. Soc.} (2) {\bf 62} (2019), no. 3, 847--859.


\bibitem{DKT}  H. Dao, T. Kobayashi, R. Takahashi, Burch ideals and Burch rings, Algebra Number Theory 14 (8) (2020) 2121–2150.

\bibitem{dk}
{\sc Dey, Souvik; Kobayashi, Toshinori}. Vanishing of (co)homology of Burch and related submodules. {\em Illinois J. Math.} {\bf 67} (2023), no. 1, 101--151.

\bibitem{Ding}
{\sc Ding, Songqing}. A note on the index of Cohen-Macaulay local rings. {\em Comm. Algebra} {\bf 21} (1993), no. 1, 53--71.

\bibitem{teter}
{\sc Gasanova, Oleksandra; Herzog, J\"{u}rgen; Hibi, Takayuki; Moradi, Somayeh}. Rings of Teter type. {\em Nagoya Math. J.} {\bf 248} (2022), 1005--1033.

\bibitem{GIK}
{\sc Goto, Shiro; Isobe, Ryotaro; Kumashiro, Shinya}. Correspondence between trace ideals and birational extensions with application to the analysis of the Gorenstein property of rings. {\em J. Pure Appl. Algebra} {\bf 224} (2020), no. 2, 747--767.

\bibitem{GMP}
{\sc Goto, Shiro; Matsuoka, Naoyuki; Phuong, Tran Thi}. Almost Gorenstein rings. {\em J. Algebra} {\bf 379} (2013), 355--381.


\bibitem{GW}
{\sc Goto, Shiro; Watanabe, Keiichi}. On graded rings. I. {\em J. Math. Soc. Japan} {\bf 30} (1978), no. 2, 179--213.

\bibitem{Herzog}
{\sc Herzog, J\"{u}rgen}. Generators and relations of abelian semigroups and semigroup rings. {\em Manuscripta Math.} {\bf 3} (1970), 175--193.

\bibitem{hbs}
{\sc Herzog, J\"{u}rgen; Hibi, Takayuki; Stamate, Dumitru I.} The trace of the canonical module. {\em Israel J. Math.} {\bf 233} (2019), no. 1, 133--165.


\bibitem{HKS}
{\sc Herzog, J\"{u}rgen; Kumashiro, Shinya; Stamate, Dumitru I.} The tiny trace ideals of the canonical modules in Cohen-Macaulay rings of dimension one. {\em J. Algebra} {\bf 619} (2023), 626--642.

\bibitem{HKun}
{\sc Herzog, J\"{u}rgen; Kunz, Ernst}. Der kanonische Modul eines Cohen-Macaulay-Rings. Lecture Notes in Mathematics, {\bf 238}. Springer-Verlag, Berlin–New York, 1971. {\rm vi}+106 pp.



\bibitem{HS}
{\sc Huneke, Craig; Swanson, Irena}. Integral closure of ideals, rings, and modules. London Mathematical Society Lecture Note Series, {\bf 336}. Cambridge University Press, Cambridge, 2006. {\rm xiv}+431 pp.

\bibitem{HV}
{\sc Huneke, Craig; Vraciu, Adela}. Rings that are almost Gorenstein. {\em Pacific J. Math.} {\bf 225} (2006), no. 1, 85--102.

\bibitem{Ko}
{\sc Kobayashi, Toshinori}. Local rings with self-dual maximal ideal. {\em Illinois J. Math.} {\bf 64} (2020), no. 3, 349--373.


\bibitem{Lin}
{\sc Lindo, Haydee}. Trace ideals and centers of endomorphism rings of modules over commutative rings. {\em J. Algebra} {\bf 482} (2017), 102--130.


\bibitem{sm}
{\sc Maitra, Sarasij}. Partial trace ideals and Berger's conjecture. {\em J. Algebra} {\bf 598} (2022), 1--23.

\bibitem{Mai2}
{\sc Maitra, Sarasij}. Partial trace ideals, torsion and canonical module. {\em J. Algebra} {\bf 652} (2024), 1--19.


\bibitem{matsu}
{\sc Matsumura, Hideyuki}. Commutative ring theory. Translated from the Japanese by M. Reid. Cambridge Studies in Advanced Mathematics, {\bf 8}. Cambridge University Press, Cambridge, 1986. {\rm xiv}+320 pp.

\bibitem{Vas}
{\sc Vasconcelos, Wolmer V}. Computing the integral closure of an affine domain. {\em Proc. Amer. Math. Soc.} {\bf 113} (1991), no. 3, 633--638.


\end{thebibliography}
\end{document}